\newtheorem{assumption}{Assumption}
\newtheorem{lemma}{Lemma}
\newtheorem{thm}{Theorem}
\newtheorem{remark}{Remark}
\newtheorem{corollary}{Corollary}
\DeclareMathOperator*{\argmin}{argmin}  
\definecolor{red1}{RGB}{192,0,0}
\def\BibTeX{{\rm B\kern-.05em{\sc i\kern-.025em b}\kern-.08em
    T\kern-.1667em\lower.7ex\hbox{E}\kern-.125emX}}
\begin{document}
\title{Decentralized Composite Optimization in Stochastic Networks: A Dual Averaging Approach with Linear Convergence}
\author{Changxin Liu, \IEEEmembership{Member, IEEE}, Zirui Zhou, Jian Pei, \IEEEmembership{Fellow, IEEE}, Yong Zhang, and Yang Shi, \IEEEmembership{Fellow, IEEE}
	\thanks{This work was supported by the Natural Sciences and Engineering Research Council of Canada. \textit{(Corresponding author: Yang Shi.)} }

\thanks{C. Liu and Y. Shi are with the Department of Mechanical Engineering, University of Victoria, BC V8W 3P6, Canada (e-mail: changxin@kth.se; yshi@uvic.ca).}
\thanks{Z. Zhou and Y. Zhang are with the Vancouver Research Center, Huawei, Burnaby, BC V5C 6S7, Canada (e-mail: zirui.zhou@huawei.com; yong.zhang3@huawei.com).}
\thanks{J. Pei is with the School of Computing Science, Simon Fraser University, Burnaby, BC V5A 1S6, Canada (e-mail: jpei@cs.sfu.ca).}
}

\maketitle

\begin{abstract}
Decentralized optimization, particularly the class of decentralized composite convex optimization (DCCO) problems, has found many applications. Due to ubiquitous communication congestion and random dropouts in practice, it is highly desirable to design decentralized algorithms that can handle stochastic communication networks. However, most existing algorithms for DCCO only work in networks that are deterministically connected during bounded communication rounds, and therefore cannot be extended to stochastic networks.
In this paper, we propose a new decentralized dual averaging (DDA) algorithm that can solve DCCO in stochastic networks. Under a rather mild condition on stochastic networks, we show that the proposed algorithm attains \emph{global linear convergence} if each local objective function is strongly convex.
Our algorithm substantially improves the existing DDA-type algorithms as the latter were only known to converge \emph{sublinearly} prior to our work. The key to achieving the improved rate is the design of a novel dynamic averaging consensus protocol for DDA, which intuitively leads to more accurate local estimates of the global dual variable. To the best of our knowledge, this is the first linearly convergent DDA-type decentralized algorithm and also the first algorithm that attains global linear convergence for solving DCCO in stochastic networks. Numerical results are also presented to support our design and analysis.
\end{abstract}


\section{Introduction}
\label{sec:introduction}
Consider a group of $n$ agents (e.g., processors, machines), each of which has its own objective function. They are connected via a bidirectional communication network and aim to cooperatively solve the following convex composite optimization problem in a decentralized manner:
\begin{equation}\label{OPT}
	\min_{x\in\mathbb{R}^m}  \left\{ F(x) := \frac{1}{n}\sum_{i=1}^{n}f_i(x) +h(x) \right\},
\end{equation}
where
$f_i$ is the local smooth objective function of agent $i$ and $h$ is a non-smooth regularization term that is shared across all the agents. Problem~\eqref{OPT} is referred to as decentralized convex composite optimization \cite{shi2015proximal,alghunaim2019linearly} and finds broad applications in optimal control of multi-agent systems~\cite{raffard2004distributed}, resource allocation~\cite{uribe2019resilient}, and large-scale machine learning~\cite{lian2017can}, just to name a few \cite{yang2019survey}.

In this work, we focus on solving Problem \eqref{OPT} when the communication network is \emph{stochastic}. There are many practical reasons that promote the consideration of stochastic communication networks. Indeed, communication in real networks is usually subject to congestion, errors, and random dropouts, which is typically modeled as a stochastic process. Besides, stochastic networks are useful for proactively reducing communication cost. For instance, the gossip protocol~\cite{boyd2006randomized} and Bernoulli protocol~\cite{kar2008sensor}, which randomly choose a subset of communication links from an underlying dense graph in each iteration, have been widely regarded as effective strategies to avoid high communication cost and network congestion. Therefore, it is highly desirable to develop decentralized algorithms that solve Problem~\eqref{OPT} over stochastic communication networks and attain a favorable convergence rate.

Over the past decade, many algorithms have been proposed for solving Problem~\eqref{OPT}. Some of them exploit the composite structure in~\eqref{OPT} and attain global \emph{linear} convergence if Problem~\eqref{OPT} is strongly convex (see, e.g.,~\cite{latafat2019new,alghunaim2019linearly}), which is the fastest rate of convergence that one can expect from a first-order decentralized algorithm. However, such linear convergence results are limited to \emph{time-invariant} communication networks, because the design of these algorithms inherently requires knowledge of network topology \emph{a priori}. Indeed, these algorithms are typically developed upon leveraging centralized primal-dual optimization paradigms, such as the alternating direction method of multipliers (ADMM)~\cite{boyd2010distributed}, to solve the following problem that is equivalent to~\eqref{OPT}:
\begin{equation}\label{eq:trans-prob}
	\min_{x_1,\dots,x_n\in\mathbb{R}^m}  \ \frac{1}{n}\sum_{i=1}^{n}\Big( f_i(x_i)+h(x_i) \Big) \quad
	\mbox{s.t.} \  (\mathcal{L}\otimes I){\mathbf x}=0,
\end{equation}
where $\mathbf{x} = [x_1^T, \dots, x_n^T]^T$, $\otimes$ denotes the Kronecker product, $I$ is an identity matrix of size $m\times m$, and $\mathcal{L}$ denotes the graph Laplacian associated with the communication network. Since $\mathcal{L}$ needs to be explicitly given in formulation~\eqref{eq:trans-prob}, these algorithms and their associated linear convergence results cannot be extended to stochastic communication networks, where the network topology is time-varying and random.
Among the existing decentralized optimization methods, the decentralized dual averaging (DDA) algorithm proposed by~\cite{NIPS2010_faa9afea} and its later extensions~\cite{tsianos2012push,lee2016coordinate,pmlr-v48-colin16} have been recognized as a powerful framework that can handle stochastic networks. However, the convergence rates of existing DDA-type algorithms are rather slow. In fact, even for decentralized convex \emph{smooth} optimization in time-invariant networks, which is deemed to be much simpler than Problem \eqref{OPT} in stochastic networks, these algorithms were only known to converge \emph{sublinearly}. Specifically, existing DDA-type algorithms, when applied to Problem~\eqref{OPT}, only attain an $\mathcal{O}({1}/{\sqrt{t}})$ sublinear rate of convergence. For the special case of Problem~\eqref{OPT} with $h\equiv 0$,~\cite{liu2020towards} recently showed that the convergence rate can be improved to $\mathcal{O}(1/t)$. Nevertheless, it remains open whether a DDA-type algorithm can attain linear rate of convergence.

\textbf{Contribution.} In this paper, we propose a new DDA algorithm that solves Problem \eqref{OPT} in stochastic networks. Under a rather mild condition on the stochastic network, we show that the proposed algorithm has an $\mathcal{O}(1/t)$ rate of convergence in the general case and a global linear rate of convergence if each local objective function is strongly convex. Our work contributes to the literature of decentralized optimization in the following two aspects:
\begin{itemize}
	\item[i)] We develop the first decentralized algorithm that attains global linear convergence for solving Problem\eqref{OPT} in stochastic networks. Existing linearly convergent decentralized algorithms for Problem \eqref{OPT} only work in networks that are deterministically connected during bounded communication rounds, and therefore cannot be extended to stochastic networks.
	Our algorithm is based on a DDA framework that is fundamentally different from these algorithms.
	
	\item[ii)] Our algorithmic design and convergence analysis shed new light on DDA-type algorithms. Notably, it is the first DDA-type algorithm that attains linear convergence. Prior to our work, even for decentralized convex \emph{smooth} optimization in time-invariant networks, existing DDA-type algorithms were only known to converge sublinearly. The key to achieving the improved rate is the design of a novel dynamic averaging consensus protocol for DDA, which intuitively leads to more accurate local estimates of the global dual variable.
\end{itemize}

\section{Related Works}

{\bf Decentralized algorithms for Problem \eqref{OPT} in deterministic networks.} Due to its broad applications, Problem \eqref{OPT} has received attention in the community of decentralized optimization for many years; see, e.g., \cite{shi2015proximal} for an early attempt. It is only until recently that linearly convergent decentralized algorithms have been developed for solving Problem \eqref{OPT} in determinisitc networks.
For time-invariant networks, \cite{alghunaim2019linearly} developed a decentralized proximal gradient method, where the diffusion step and the proximal step are designed differently from~\cite{shi2015proximal} such that not only the fixed point meets the global optimality condition but also linear convergence can be attained for strongly convex problems. {Furthermore, the strategy was generalized as a unified framework for proximal gradient tracking in \cite{alghunaim2020decentralized}.}
\cite{latafat2019new} proposed a distributed algorithm based on randomized block-coordinate proximal method, which exhibits an asymptotic linear convergence if the monotone operator associated with Problem~\eqref{OPT} is metrically subregular (a much weaker condition than strong convexity).
Very recently,~\cite{xu2020unified} proposed a unified decentralized algorithmic framework based on the operator splitting theory, which attains linear convergence for the strongly convex case. 
For deterministic time-varying networks, the authors in \cite{sun2019distributed} developed a linearly convergent decentralized optimization algorithm based on the gradient-tracking technique and elaborate objective surrogates.
However, it still requires the network to be connected during bounded communication rounds, which is a worst-case assumption about network connectivity \cite{lobel2010distributed} and does not necessarily hold in stochastic networks.
To summarize, existing linearly convergent decentralized algorithms for Problem \eqref{OPT} are only applicable to deterministic networks and cannot be extended to the stochastic networks, which motivates the new algorithm development and convergence analysis in this paper.

{\bf Decentralized optimization in stochastic networks.}
The study of decentralized algorithms over stochastic networks dates back to~\cite{lobel2010distributed}, who proposed a subgradient-based algorithm with diminishing step sizes. The decentralized dual averaging algorithm, which combines dual averaging method~\cite{nesterov2009primal} and consensus-seeking, was reported by~\cite{NIPS2010_faa9afea} and can handle stochastic networks with an $\mathcal{O}(1/\sqrt{t})$ sublinear rate of convergence.
The decentralized accelerated gradient algorithm with a random network model was proposed by~\cite{jakovetic2013convergence}, where an $\mathcal{O}(\frac{\log t}t)$ sublinear convergence rate is obtained for smooth problems.
{A decentralized ADMM algorithm was designed in \cite{chang2016proximal}, where a few nodes are randomly selected to perform local updates.
Decentralized optimization with asynchronous local updates was considered in  \cite{peng2016arock,bastianello2020asynchronous}.}
Later,~\cite{xu2017convergence,pu2020push} validated the use of a constant step size in decentralized gradient descent over stochastic networks, leading to a global linear rate of convergence for strongly convex and smooth problems. Recently, \cite{koloskova2020unified} developed a unified framework for decentralized stochastic gradient descent over stochastic networks.
It is worth mentioning that the aforementioned studies either consider general non-smooth problems or focus on smooth problems. In particular, they cannot exploit the composite structure of Problem~\eqref{OPT}, partially due to the technical difficulty caused by the so-called \emph{projection-consensus coupling} \cite{NIPS2010_faa9afea} for methods integrating consensus-seeking and projected/proximal gradient descent. 

In summary, to the best of our knowledge, no existing methods can solve or can be easily extended to solve Problem \eqref{OPT} in stochastic networks with global linear convergence.

\section{Preliminaries}\label{sec:preliminaries}
\subsection{Basic Setup}
We consider the finite-sum optimization problem~\eqref{OPT}, in which $h$ is a closed convex function with its domain, denoted by $\mbox{dom}(h)$, being non-empty, and $f_i$ satisfies the following assumptions for all $i=1,\dots,n$. {Typical choices of $h$ include the elastic net regularization, i.e., $h(x)=\lambda_1 \lVert x \rVert_1+\lambda_2\lVert x \rVert_2^2, \lambda_1,\lambda_2\geq 0$, and the indicator function of a closed convex set.}
\begin{assumption}\label{problemassumption}
	i) $f_i$ is continuously differentiable on an open set that contains $\mbox{dom}(h)$;
	ii) $f_i$ is (strongly) convex with modulus $\mu\geq 0$ on $\mbox{dom}(h)$, i.e., for any $x,y\in\mbox{dom}(h)$, 
	\begin{equation}
		\label{eq:strongly-convex}
		f_i(x) - f_i(y) - \langle\nabla f_i(y), x - y \rangle \geq \frac{\mu}{2}\|x - y\|^2;
	\end{equation}
	and iii) $\nabla f_i$ is Lipschitz continuous on $\mbox{dom}(h)$ with Lipschitz constant $L>0$, i.e., for any $x,y\in\mbox{dom}(h)$,
	\begin{equation}
		\label{eq:Lip-origin}
		\|\nabla f_i(x) - \nabla f_i(y)\| \leq L\|x - y\|.
	\end{equation}
\end{assumption}
Throughout the paper, we denote by $x^*$ an optimal solution of Problem~\eqref{OPT}. Assumption~\ref{problemassumption} is standard in the study of decentralized optimization \cite{nedic2017achieving,shi2015proximal}. It is worth noting that we allow $\mu = 0$ in Assumption~\ref{problemassumption}(ii), which reduces to the general convex case.
\subsection{Stochastic Communication Networks}
We consider solving Problem~\eqref{OPT} in a decentralized manner, that is, each agent $i$ holds a local objective function $F_i := f_i + h$ and a pair of agents can exchange information only if they are connected in the communication network. Similar to existing studies \cite{nedic2009distributed,NIPS2010_faa9afea,shi2015extra,shi2015proximal}, we use a doubly stochastic matrix $P^{(t)}\in[0,1]^{n\times n}$ to encode the network topology and the weights of connected links at time $t$. We focus on the fairly general setting of stochastic communication networks, i.e., $P^{(t)}$ is a random matrix for every $t$. For the convergence of the proposed decentralized algorithm, we make the following assumption on $P^{(t)}$.

\begin{assumption}\label{graphconnected}
	For every $t\geq 0$, it holds that
	{i) the network is undirected};
	ii) 
	$P^{(t)}\mathbf{1} = \mathbf{1}$ and $\mathbf{1}^TP^{(t)} = \mathbf{1}^T$, where $\mathbf{1}$ denotes the all-one vector of dimensionality $n$;
	iii) $P^{(t)}$ is independent of the random events that occur up to time $t-1$; and
	iv) there exists a constant $\beta\in(0,1)$ such that
	\begin{equation}
		\label{eq:sigma-bound}
		\sqrt{\rho\left(\mathbb{E}_t\left[{P^{(t)}}^TP^{(t)}\right] - \frac{\mathbf{1}\mathbf{1}^T}{n}\right)} \leq \beta,
	\end{equation}
	where $\rho(\cdot)$ denotes the spectral radius and the expectation $\mathbb{E}_t[\cdot]$ is taken with respect to the distribution of $P^{(t)}$ at time $t$.
\end{assumption}

{Assumption~\ref{graphconnected} has been used for analyzing the convergence of a host of decentralized algorithms; see, e.g.,~\cite{boyd2006randomized,xu2017convergence,koloskova2020unified}. 
It is satisfied by numerous stochastic communication  settings;
we take the following two common settings as examples. 
\textit{i) Randomized gossip}: At every time $t$ one communication link $(i,j)$ is sampled from an underlying graph $\mathcal{G}$. Suppose that we take $P^{(t)} = I-\frac{1}{2}(e_i-e_j)(e_i-e_j)^T$, where $I$ is the identity matrix and $e_i\in\mathbb{R}^n$ is a vector with $1$ in the $i$-th position and $0$ otherwise. Then, it is known that Assumption~\ref{graphconnected} is satisfied provided that the underlying graph $\mathcal{G}$ is connected; see, e.g.,~\cite{boyd2006randomized}. \textit{ii) Bernoulli stochastic networks}: Consider an underlying graph $\mathcal{G}$, where the state (online or offline) of each link $(i,j)$ is a Bernoulli process with link probability $w_{ij}$.
Suppose the corresponding Bernoulli processes are statistically independent  for different pairs of edges, and $w_{ij}=w_{ji}$.
Denote by $\mathcal{L}^{(t)}$ the Laplacian at time $t$, and set $P^{(t)} = I - \mathcal{L}^{(t)}/(2d)$, where $d = \max_i d_i$ and $d_i$ is the degree of node $i$ in $\mathcal{G}$. It can be verified that Assumption 2 holds when the second largest eigenvalue of Laplacian average $\overline{\mathcal{L}}$ is strictly positive \cite{kar2008sensor}.} 




\subsection{Dual Averaging Method}

Our algorithm is based on the dual averaging method that was originally proposed by~\cite{nesterov2009primal}. The dual averaging method originally proposed by~\cite{nesterov2009primal} can be directly applied to solve Problem~\eqref{OPT} in a \emph{centralized} manner. In particular, let $d$ be a strongly convex function with modulus $1$ on $\mbox{dom}(h)$ such that 
\begin{equation}
	\label{eq:initial-cond}
	x^{(0)} = \argmin_{x\in\mathbb{R}^m} d(x) \in \mbox{dom}(h) \ \ \mbox{and} \ \ d(x^{(0)}) = 0.
\end{equation}
Then, the dual averaging method starts with $x^{(0)}$ and iteratively generates $\{x^{(t)}\}_{t\geq 1}$ according to
\begin{equation}
	\label{eq:standard-DA}
	x^{(t)} = \argmin_{x\in\mathbb{R}^m} \left\{ \sum_{\tau = 0}^{t - 1} a_{\tau + 1} \ell(x; x^{(\tau)}) + d(x) \right\},
\end{equation}
where 
\begin{equation}
	\label{eq:at}
	a_t = \frac{a}{(1 - a\mu)^t}, \quad t = 1,2,\dots
\end{equation}
for some constant $a>0$, $\ell:\mathbb{R}^m\times\mathbb{R}^m\rightarrow\mathbb{R}$ is defined as
\begin{equation}
	\label{eq:def-ell}
	\ell(y;z) := f(z) + \langle \nabla f(z), y - z\rangle + \frac{\mu}{2}\|y - z\|^2 + h(y)
\end{equation}
for any $y,z\in\mathbb{R}^m$, and $f = \frac{1}{n}\sum_{i=1}^nf_i$. It is worth noting that for the strongly convex case (i.e., $\mu>0$), the sequence $\{a_t\}_{t\geq 1}$ is geometrically increasing; for the general convex case (i.e., $\mu = 0$), the sequence $\{a_t\}_{t\geq 1}$ equals the constant $a$. Moreover, both~\eqref{eq:standard-DA} and~\eqref{eq:at} require the modulus $\mu$ of strong convexity. In practice, one can use a lower bound of $\mu$ or simply set $\mu = 0$ in~\eqref{eq:standard-DA} and~\eqref{eq:at} if no valid lower bound is available.

The following theorem summarizes the convergence property of the above dual averaging method, which is a direct extension of Theorem 3.2 by~\cite{lu2018relatively} to problems with non-smooth regularization terms.  A proof of Theorem~\ref{exact_thm} is provided in \cite[Appendix F]{liu2021decentralized}.

\begin{thm}\label{exact_thm}
	When Assumption~\ref{problemassumption} is satisfied, let $\{x^{(t)}\}_{t\geq 0}$ be the sequence of iterates generated by the dual averaging method~\eqref{eq:standard-DA}, if $a \leq L^{-1}$, then
	\begin{equation*}
		F(\tilde{x}^{(t)}) - F(x^*) \leq \frac{d(x^*)}{A_t}, \quad t = 1,2,\dots,
	\end{equation*}
	where $A_t = \sum_{\tau = 1}^ta_\tau$ and $\tilde{x}^{(t)} = A_t^{-1}\sum_{\tau=1}^ta_\tau x^{(\tau)}$. Moreover, the following estimates on $A_t^{-1}$ hold: i) If $\mu > 0$, then 
	$
	\frac{1}{A_t} \leq \frac{(1 - a\mu)^t}{a}
	$; and
	ii) If $\mu = 0$, then
	$
	\frac{1}{A_t} = \frac{1}{at}.
	$
\end{thm}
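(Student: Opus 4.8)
The plan is to sandwich the optimal value of the regularized subproblem between a weighted average of the objective values at the iterates and the optimal value $F(x^*)$. Define the running model $\psi_t(x) := \sum_{\tau=0}^{t-1} a_{\tau+1}\ell(x;x^{(\tau)}) + d(x)$, so that $x^{(t)} = \argmin_x \psi_t(x)$, and write $V_t := \psi_t(x^{(t)}) = \min_x \psi_t(x)$; note $V_0 = \min_x d(x) = d(x^{(0)}) = 0$ by the initialization \eqref{eq:initial-cond}. Two elementary properties of $\ell$ drive everything. First, averaging the (strong) convexity inequality \eqref{eq:strongly-convex} over $i$ shows that $\ell(\cdot;z)$ underestimates $F$, i.e.\ $\ell(y;z) \leq F(y)$ for all $y,z$. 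Second, the Lipschitz bound \eqref{eq:Lip-origin} (the descent lemma) gives the matching reverse inequality $F(y) \leq \ell(y;z) + \frac{L-\mu}{2}\|y-z\|^2$.

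The upper bound is then immediate: since $\ell(x^*;x^{(\tau)}) \leq F(x^*)$ for each $\tau$, we get $V_t \leq \psi_t(x^*) \leq A_t F(x^*) + d(x^*)$. The second ingredient I would record is that $\psi_t$ is strongly convex with modulus $\sigma_t := 1 + \mu A_t$, since each $\ell(\cdot;x^{(\tau)})$ contributes $\mu$ through its quadratic term and $d$ contributes $1$. Because $x^{(t)}$ minimizes $\psi_t$, so that $0$ is a subgradient there, this yields the quadratic lower bound $\psi_t(x) \geq V_t + \frac{\sigma_t}{2}\|x - x^{(t)}\|^2$ for all $x$, which disposes of the nonsmooth term $h$ cleanly.

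The heart of the argument is to show $V_t \geq \sum_{\tau=1}^t a_\tau F(x^{(\tau)})$ by induction on $t$, for which it suffices to establish the one-step recursion $V_{t+1} \geq V_t + a_{t+1}F(x^{(t+1)})$. Writing $\psi_{t+1}(x) = \psi_t(x) + a_{t+1}\ell(x;x^{(t)})$, I would substitute the quadratic lower bound for $\psi_t$, evaluate at $x = x^{(t+1)}$ (so the left side becomes $V_{t+1}$), and replace $\ell(x^{(t+1)};x^{(t)})$ using the smoothness bound from the first step. This produces
\[
V_{t+1} \geq V_t + a_{t+1}F(x^{(t+1)}) + \tfrac{1}{2}\left(\sigma_t - a_{t+1}(L-\mu)\right)\|x^{(t+1)} - x^{(t)}\|^2,
\]
so the recursion holds once the bracket is nonnegative. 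The key computation is to evaluate the geometric sum to obtain $A_t = \big((1-a\mu)^{-t}-1\big)/\mu$, whence $\sigma_t = (1-a\mu)^{-t}$ and $a_{t+1} = \frac{a}{1-a\mu}\sigma_t$; the condition $\sigma_t \geq a_{t+1}(L-\mu)$ then collapses exactly to $aL \leq 1$. I expect this cancellation---verifying that the stepsize cap $a \leq L^{-1}$ is precisely what annihilates the residual quadratic term---to be the main technical obstacle, as it is where strong convexity and smoothness must be balanced against each other.

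Combining the two bounds gives $\sum_{\tau=1}^t a_\tau\big(F(x^{(\tau)}) - F(x^*)\big) \leq d(x^*)$, and convexity of $F$ (Jensen's inequality applied to the weighted average $\tilde{x}^{(t)}$) upgrades this to $A_t\big(F(\tilde{x}^{(t)}) - F(x^*)\big) \leq d(x^*)$, which is the claimed rate after dividing by $A_t$. For the estimates on $A_t^{-1}$: when $\mu = 0$ we have $a_t \equiv a$, hence $A_t = at$ and $A_t^{-1} = 1/(at)$; when $\mu > 0$, the closed form for $A_t$ reduces the target $A_t^{-1} \leq (1-a\mu)^t/a$ to the elementary inequality $a\mu \leq 1 - (1-a\mu)^t$, which follows from $(1-a\mu)^t \leq 1-a\mu$ for $t \geq 1$.
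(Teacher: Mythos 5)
Your proof is correct and is essentially the paper's own argument in the standard ``estimate sequence'' packaging: the running model $\psi_t$ is the paper's $r_t$, your sandwich $\sum_{\tau=1}^t a_\tau F(x^{(\tau)}) \le V_t \le A_tF(x^*)+d(x^*)$ is the paper's Lemma~\ref{dual_averaging_inequality} combined with the smoothness/strong-convexity bounds in the main proof, and your condition $\sigma_t \ge a_{t+1}(L-\mu)$ is exactly the paper's identity $(1+\mu A_\tau)/a_\tau = 1/a$ together with $a\le L^{-1}$. The closed forms $1+\mu A_t=(1-a\mu)^{-t}$ and the two estimates on $A_t^{-1}$ also match the paper's computations.
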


\section{Algorithm and Main Results}\label{sec:alg-main}
\label{sec:main-results}

From Theorem~\ref{exact_thm}, one can observe that the dual averaging method, when applied to solve Problem~\eqref{OPT} in a \emph{centralized} manner, attains global linear convergence if Problem~\eqref{OPT} is strongly convex. The existing dual averaging based \emph{decentralized} algorithms, however, converge only sublinearly. In view of this, the following question arises naturally: can we develop a dual averaging based decentralized algorithms that can achieve the same order of convergence as its centralized counterpart, that is, linear convergence? In this section, we put an affirmative answer to this question by developing a new DDA algorithm that incorporates a novel dynamic averaging consensus protocol for each local update, which intuitively leads to more accurate local estimates of the global dual variable. We show that the new DDA, when applied to solve Problem~\eqref{OPT} in stochastic networks, converges linearly if each local objective is strongly convex. Our algorithmic design and convergence analysis shed new light on DDA-type algorithms, as it is the first DDA-type algorithm that can achieve linear convergence. Besides, it is also the first linearly convergent algorithm for solving Problem \eqref{OPT} in stochastic networks.


To motivate the design of our DDA method, we observe that by letting $A_t = \sum_{\tau = 1}^t a_{\tau}$ and
$$ z^{(t)} = \sum_{\tau = 0}^{t-1}a_{\tau+1}\left(\frac{1}{n}\sum_{i=1}^n\nabla f_i(x^{(\tau)}) - \mu x^{(\tau)}\right), $$
the update rule~\eqref{eq:standard-DA} can be written as
\begin{equation}\label{transfored_DA}
	 x^{(t)} = \argmin_{x\in\mathbb{R}^m}\left\{\langle z^{(t)}, x\rangle + A_t\left(\frac{\mu}{2}\|x\|^2 + h(x) \right) + d(x)\right\}. 
\end{equation}
Thus, it is sensible for each agent to locally estimate the global dual variable $z^{(t)}$ to fulfill decentralization. To this end, we propose the following dynamic averaging consensus protocol:
\begin{subequations}\label{2nd_order_consensus}
	\begin{align}
		z_i^{(t)} &=\sum_{j=1}^np_{ij}^{(t-1)}\left(z_{j}^{(t-1)}+ a_{t}s_{j}^{(t-1)}\right), \label{z}  \\
		s_{i}^{(t)} &= \sum_{j=1}^np_{ij}^{(t-1)}s_{j}^{(t-1)}+\left(\nabla f_i(x_{i}^{(t)})-\mu x_{i}^{(t)}\right) \nonumber \\
		&\quad \quad \quad \quad \quad \quad \quad\quad  -\left(\nabla f_i(x_{i}^{(t-1)})-\mu x_{i}^{(t-1)}\right) \label{s},
	\end{align}
\end{subequations}
where $p_{ij}^{(t)}$ is the $(i,j)$-th element in the mixing matrix $P^{(t)}$, $z_i^{(t)}$ is the $i$-th agent's local estimate of $z^{(t)}$ at time $t$ and $s_i^{(t)}$ is an auxiliary vector for reducing consensus error. Equipped with these, each agent $i$ can perform a local computation to update its estimate of the global primal variable $x^{(t)}$: 
\begin{equation}\label{primal}
	\begin{split}
		{x}_{i}^{(t)}
		= \argmin_{x\in\mathbb{R}^m}\Big\{ \langle z_{i}^{(t)},x \rangle+ A_{t}\Big(\frac{\mu}{2}\lVert x\rVert^2 +h(x)\Big )+d(x)\Big\} .
	\end{split}	
\end{equation}
We denote by $\mathcal{N}_i^{(t)}$ the set of agents that are connected with agent $i$ at time $t$. 
Then, the entire algorithm can be summarized in Algorithm~\ref{LinDDA}.

\begin{algorithm}[tb]
	\caption{The proposed decentralized dual averaging algorithm for Problem~\eqref{OPT}}
	\label{LinDDA}
	\begin{algorithmic}[1]
		\STATE {\bfseries Input:} $\mu\geq 0$, $a>0$, $x^{(0)}\in\mbox{dom}(h)$ and a strongly convex function $d$ with modulus $1$ on $\mbox{dom}(h)$ such that~\eqref{eq:initial-cond} holds
		\STATE {\bfseries Initialize:} $a_0 = a$, $A_0 = 0$, $x_i^{(0)} = x^{(0)}$, $z_i^{(0)} = 0$, and $s_i^{(0)} = \nabla f_i(x^{(0)})-\mu x^{(0)}$ for all $i = 1, \dots, n$ 
		\FOR{$t=1,2,\cdots$}
		\STATE set $a_{t}=a_{t-1}/(1 - a\mu)$ and $A_{t}=A_{t-1}+a_{t}$
		\STATE \emph{In parallel (for agent $i$, $i = 1,\dots,n$)}
		\STATE collect $z_{j}^{(t-1)}$ and $s_{j}^{(t-1)}$ from all agents $j\in\mathcal{N}_{i}^{(t-1)}$
		\STATE update $z_{i}^{(t)}$ and $s_{i}^{(t)}$ by~\eqref{2nd_order_consensus}
		\STATE compute $x_i^{(t)}$ by~\eqref{primal}
		\STATE broadcast $z_{i}^{(t)}$ and $s_{i}^{(t)}$ to all agents $j\in\mathcal{N}_{i}^{(t)}$
		\ENDFOR
	\end{algorithmic}
\end{algorithm}

Our protocol~\eqref{2nd_order_consensus} differs from the one used in the original DDA \cite{NIPS2010_faa9afea} in~\eqref{s}, where the latter simply lets $s_i^{(t)} = \nabla f_i(x_i^{(t)})$ for all agents $i$. Our update in~\eqref{s} is a second order dynamic averaging consensus protocol motivated by~\cite{zhu2010discrete}, and equips each agent $i$ with an $s_i^{(t)}$ that can track the global variable $\frac{1}{n}\sum_{i=1}^n(\nabla f_i(x_i^{(t)})-\mu x_i^{(t)})$. Intuitively,~\eqref{2nd_order_consensus} can lead to much more accurate local estimates $\{z_i^{(t)}\}_{i=1}^n$ of the global dual variable $z^{(t)}$. Moreover, as we will show in the convergence analysis, the novel update~\eqref{s} validates the use of \emph{geometrically increasing} weights $\{a_t\}_{t\geq0}$ in \eqref{z}. This contrasts with the use of \emph{decaying} weights in other DDA-type algorithms and is key to achieving the linear convergence result. 


Before proceeding, we make some remarks on Algorithm~\ref{LinDDA}. First, Algorithm~\ref{LinDDA} provides a unified treatment for both general convex and strongly convex cases. In particular, if $\mu = 0$, we simply set $a_t = a$ and $A_t = at$ for all $t$. Second, to satisfy the condition in~\eqref{eq:initial-cond}, one can choose an arbitrary $x^{(0)}\in\mbox{dom}(h)$ and let
$ d(x) := \tilde{d}(x) - \tilde{d}(x^{(0)}) - \langle \nabla \tilde{d}(x^{(0)}), x - x^{(0)}\rangle, $
where $\tilde{d}$ is any strongly convex function with modulus $1$, e.g., $\tilde{d}(x) = \|x\|^2/2$. It is easy to verify that such $x^{(0)}$ and $d$ satisfy~\eqref{eq:initial-cond}.
{Third, for agent $i$ with $\mathcal{N}_i^{(t-1)}=\emptyset$, it will not perform Step 6. However, all the agents are required to compute $x_i^{(t)}$ according to Steps 7 and 8. }
 {Finally, similar to the standard dual averaging method, we assume that the subproblem~\eqref{primal} can be computed easily. This holds for a host of applications. For example, if we choose $d(x) = \|x - x^{(0)}\|^2/2$, then the subproblem~\eqref{primal} reduces to computing the proximal operator of $A_th/(1+\mu A_t)$, which admits a closed-form solution in many applications. 
Compared to ADMM-based methods \cite{shi2014linear}, where typically a non-trivial dual problem is solved at each iteration, the proposed method has lighter computational cost per step. 
When subproblem~\eqref{primal} cannot be computed efficiently, one may run another loop to compute an approximate solution, which is common decentralized composite optimization.}

\begin{remark} (Intuition behind \eqref{2nd_order_consensus})
In the centralized dual averaging update \eqref{transfored_DA}, only $z^{(t)}$ contains global information. Therefore, if $z^{(t)}$ can be estimated sufficiently accurate by the agents, they can solve \eqref{OPT} in a decentralized way. We follow the idea in \cite{zhu2010discrete} that the second-order dynamic average consensus can be used to estimate the average of local signals whose second-order differences are relatively bounded. Particularly, observe that 
\begin{equation*}
	z_i^{(t)} = \sum_{\tau=0}^{t-1}a_{t+1}\left( \nabla f_i(x_i^{(\tau)})-\mu x_i^{(\tau)}\right).
\end{equation*} 
Take $\nabla f_i(x_i^{(\tau)})-\mu x_i^{(\tau)}-\left( \nabla f_i(x_i^{(\tau-1)})-\mu x_i^{(\tau-1)}\right)$ as the second-order difference of $z_i^{(t)}$. By \eqref{eq:Lip-origin}, one obtains its upper bound
\begin{equation*}
\begin{split}
&\left\lVert  \nabla f_i(x_i^{(\tau)})-\mu x_i^{(\tau)}-\left( \nabla f_i(x_i^{(\tau-1)})-\mu x_i^{(\tau-1)}\right) \right\rVert \\
&\leq (L+\mu)\lVert x_i^{(\tau)} - x_i^{(\tau-1)}\rVert, \forall i=1,\cdots, n.
\end{split}
\end{equation*}
Therefore, the dynamic average consensus scheme can be used to estimate $n^{-1}\sum_{i=1}^n z_i^{(t)}$ which is a good approximation of $z^{(t)}$. This motivates our update formulas \eqref{2nd_order_consensus}.
\end{remark}

\begin{remark}(Comparison with gradient-tracking methods \cite{xu2017convergence,qu2017harnessing})
Recall the update in \cite{xu2017convergence,qu2017harnessing} as follows
\begin{equation}\label{gradient-tracking}
	\begin{split}
	x_i^{(t)} &= \sum_{j=1}^n p_{ij}^{(t-1)}\left( x_j^{(t-1)}-a s_j^{(t-1)}\right) \\
    s_i^{(t)} &= \sum_{j=1}^n p_{ij}^{(t-1)} s_j^{(t-1)} + \nabla f_i(x_i^{(t)}) - \nabla f_i(x_i^{(t-1)})
	\end{split}
\end{equation}
It contains three key differences from our update in \eqref{2nd_order_consensus}: \textbf{i}) \eqref{gradient-tracking} updates $s_i^{(t)}$ by estimating the average of local gradients $n^{-1}\sum_{i=1}^n\nabla f_i(x_i^{(t)})$, but \eqref{2nd_order_consensus} updates $s_i^{(t)}$ by estimating the term $n^{-1}\sum_{i=1}^n \left( \nabla f_i(x_i^{(\tau)})-\mu x_i^{(\tau)}\right)$; \textbf{ii}) \eqref{gradient-tracking} weights $s_i^{(t)}$ with constant $a$, but \eqref{2nd_order_consensus} weights $s_i^{(t)}$ with geometically increasing $\{a_t\}_{t\geq 0}$ that is motivated by the dual averaing method; \textbf{iii}) In the proposed DDA method, another proximal operator is performed over $z_i^{(t)}$ to get $x_i^{(t)}$. The gradient evaluated over $x_i^{(t)}$ is then used to update $s_i^{(t)}$. While \eqref{gradient-tracking} does not accommodate proximal operators.
\end{remark}

The rest of this section presents the convergence results of Algorithm~\ref{LinDDA}. To proceed, we denote 
\begin{equation}\label{eq:matrix-M}
	\mathbf{M}=\begin{bmatrix}
		\beta & \beta \\
		\frac{a(L+\mu)}{1-a\mu}\Big( \beta +\frac{1}{1-a\mu} \Big) & \frac{\beta+{a\beta(L+\mu)}}{1-a\mu}
	\end{bmatrix},
\end{equation}
where $L$ and $\mu$ are given in Assumption~\ref{problemassumption}, $\beta\in(0,1)$ is defined in Assumption~\ref{graphconnected}, and $a$ is an input of Algorithm~\ref{LinDDA}. Matrix $\mathbf{M}$ is the key to our convergence analysis as it defines the dynamics of the iterates generated by Algorithm~\ref{LinDDA}. 
Let $\rho(\mathbf{M})$ be the spectral radius of $\mathbf{M}$. To facilitate the presentation of our convergence analysis, we define
\begin{equation}
	\label{eq:def-nu}
		\begin{aligned}
	\nu  &:= \rho(\mathbf{M})\sqrt{1 - a\mu}, \\
	\eta  &:= (1 - a\mu)(1 - \nu)^2, \\
	\theta  &:= (1 - a\mu)(1 - \nu^2).
		\end{aligned}
\end{equation}
The following result on $\nu$, $\eta$, and $\theta$ is fundamental to our convergence analysis whose proof can be found in Appendix~\ref{sec:proof-Lemma1}.

\begin{lemma}
	\label{lem:M-radius}
	The value of $\nu$ monotonically increases with $a$ if $a\in(0, 1/\mu)$. 
	Moreover, if 
	\begin{equation}
		\label{eq:condition1-a}
		\frac{1}{a} > \frac{\beta(2L + 3\mu)}{(1 - \beta)^2} + \mu,
	\end{equation}
	then $\nu < 1$. Consequently, $\eta$ and $\theta$ are both positive and monotonically decrease with $a$ if~\eqref{eq:condition1-a} is satisfied.
\end{lemma}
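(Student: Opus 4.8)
The plan is to reduce every claim to the Perron root of a single non-negative $2\times 2$ matrix and then apply standard monotonicity and M-matrix facts. Write $\gamma := 1 - a\mu$, and observe from \eqref{eq:def-nu} that $\nu = \rho(\mathbf{M})\sqrt{\gamma} = \rho(\sqrt{\gamma}\,\mathbf{M})$, so $\nu$ is the spectral radius of $\tilde{\mathbf{M}} := \sqrt{\gamma}\,\mathbf{M}$. Since $a\in(0,1/\mu)$ forces $\gamma>0$, the matrix $\tilde{\mathbf{M}}$ is entrywise non-negative, its off-diagonal product $\tilde M_{12}\tilde M_{21}$ is strictly positive, and hence the discriminant of its characteristic polynomial $\Phi(a,\lambda)=\lambda^2-\operatorname{tr}(\tilde{\mathbf{M}})\lambda+\det(\tilde{\mathbf{M}})$ is strictly positive; consequently $\nu$ is the \emph{simple} larger root and is a smooth function of $a$.

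For the monotonicity claim, I would differentiate $\Phi(a,\nu(a))=0$ implicitly, giving $\nu'(a) = \big(\operatorname{tr}(\tilde{\mathbf{M}})'\,\nu - \det(\tilde{\mathbf{M}})'\big)/\big(2\nu-\operatorname{tr}(\tilde{\mathbf{M}})\big)$, where the denominator equals $\sqrt{\operatorname{tr}^2-4\det}>0$ because $\nu$ is the larger root. It then suffices to sign the two derivatives. Using $\tfrac{d}{da}(a/\gamma)=1/\gamma^2$, a short computation should give $\det(\tilde{\mathbf{M}})' = -\beta(L+\mu)/\gamma^2 < 0$ and, after collecting terms, $\operatorname{tr}(\tilde{\mathbf{M}})' = \beta\gamma^{-3/2}\big(L+\mu-\tfrac12 a\mu L\big)$, which is positive since $a\mu<1$. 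With $\nu>0$ the numerator is then strictly positive, so $\nu'(a)>0$.

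For $\nu<1$, I would invoke the fact that for the non-negative matrix $\tilde{\mathbf{M}}$ one has $\rho(\tilde{\mathbf{M}})<1$ iff $I-\tilde{\mathbf{M}}$ is a nonsingular M-matrix, equivalently (as $I-\tilde{\mathbf{M}}$ is a $Z$-matrix) iff its leading principal minors $1-\tilde M_{11}$ and $\det(I-\tilde{\mathbf{M}})=1-\operatorname{tr}(\tilde{\mathbf{M}})+\det(\tilde{\mathbf{M}})$ are both positive. The first is automatic since $\tilde M_{11}=\sqrt{\gamma}\,\beta\le\beta<1$. For the second I would use $\sqrt{\gamma}\le 1$ to bound $\operatorname{tr}(\tilde{\mathbf{M}})=\sqrt{\gamma}\,\beta+\beta(1+a(L+\mu))/\sqrt{\gamma}\le \beta+\beta(1+a(L+\mu))/\gamma$ and substitute $\det(\tilde{\mathbf{M}})=\beta^2-a\beta(L+\mu)/\gamma$; the resulting sufficient inequality, after clearing $\gamma$ and rearranging, should collapse exactly to \eqref{eq:condition1-a}.

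The last claim is then immediate: whenever \eqref{eq:condition1-a} holds we have $\nu<1$ and $\gamma>0$, so $\eta=\gamma(1-\nu)^2>0$ and $\theta=\gamma(1-\nu)(1+\nu)>0$; and since $\gamma$ decreases in $a$ (is constant when $\mu=0$) while $\nu$ increases and stays in $[0,1)$, both $(1-\nu)^2$ and $1-\nu^2$ are positive and decreasing, so the products $\eta,\theta$ decrease. The main obstacle I anticipate is the monotonicity of $\nu$: because the scaling $\sqrt{\gamma}$ \emph{decreases} in $a$ while $\rho(\mathbf{M})$ \emph{increases}, the sign of $\nu'$ cannot be read off from entrywise monotonicity of $\tilde{\mathbf{M}}$ (its diagonal entries shrink), so the crux is the non-obvious fact that $\operatorname{tr}(\tilde{\mathbf{M}})'$ stays positive despite the shrinking diagonal; verifying this via the explicit cancellation above is the delicate step, with the algebraic reduction of the determinant condition to \eqref{eq:condition1-a} being the main bookkeeping effort.
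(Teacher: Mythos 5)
Your proposal is correct and follows essentially the same route as the paper: both reduce everything to the characteristic polynomial of the scaled matrix, and your leading-minor test $\det(I-\sqrt{1-a\mu}\,\mathbf{M})=1-\operatorname{tr}+\det>0$ is exactly the paper's condition $p(1/\sqrt{1-a\mu})>0$ (the paper rules out the "both roots above $1/\sqrt{1-a\mu}$" case via $p'>0$ there, you via the M-matrix characterization together with $1-\sqrt{1-a\mu}\,\beta>0$), with your lower bound on the trace collapsing to precisely the inequality the paper derives from \eqref{eq:condition1-a}. Your computations $\operatorname{tr}(\tilde{\mathbf M})'=\beta(1-a\mu)^{-3/2}\bigl(L+\mu-\tfrac12 a\mu L\bigr)>0$ and $\det(\tilde{\mathbf M})'=-\beta(L+\mu)/(1-a\mu)^2<0$ check out, and together with the strictly positive discriminant they substantiate the monotonicity step that the paper dismisses as a "routine calculation" on the closed-form expression for $\nu(a)$.
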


Equipped with Lemma~\ref{lem:M-radius}, we are ready to present the main results of this paper, which pertain to the convergence property of Algorithm~\ref{LinDDA}. Similar to some existing works \cite{NIPS2010_faa9afea}, we first present the convergence property of an auxiliary sequence $\{y^{(t)}\}_{t\geq 0}$, which then immediately implies the convergence property of the sequence $\{x_i^{(t)}: i = 1,\dots,n\}_{t\geq 0}$ generated by Algorithm~\ref{LinDDA}. In particular, we define
\begin{equation}
	\label{eq:def-y}
	y^{(t)}
	= \argmin_{x\in\mathbb{R}^m}\left\{  \langle \overline{z}^{(t)},x \rangle+ A_{t}\left(\frac{\mu }{2}\lVert x\rVert^2+h(x)\right)+d(x)\right\},
\end{equation}
where $y^{(0)}=x^{(0)}$, $\overline{z}^{(t)} = \frac{1}{n}\sum_{i=1}^nz_i^{(t)}$ and $\{z_i^{(t)}: i = 1,\dots, n\}_{t\geq 0}$ are generated by Algorithm~\ref{LinDDA}.

\begin{thm}\label{inexact_thm}
	When Assumptions~\ref{problemassumption} and~\ref{graphconnected} are satisfied, the constant $a$ in Algorithm~\ref{LinDDA} satisfies~\eqref{eq:condition1-a}, and
	\begin{align}
		\gamma := \frac{1}{a} - 2L + {\mu} - \frac{4L-2\mu}{\eta} & > 0, \label{2ndcondition}
	\end{align}
	where $\eta$ is defined in~\eqref{eq:def-nu}, then, for all $t\geq 1$, it holds that
	\begin{equation}\label{convergence}
		\begin{split}
			\mathbb{E}[F(\tilde{y}^{(t)})]  -F(x^*)
			\leq  \frac{C}{A_t},
		\end{split}
	\end{equation}
	where $\tilde{y}^{(t)}= A_t^{-1}\sum_{\tau=1}^{t}a_\tau y^{(\tau)}$ with $y^{(\tau)}$ defined in~\eqref{eq:def-y},
	\begin{equation*}
		\begin{split}
			C
			:=	{d(x^*)+ \frac{a\big(2L-\mu\big) \sigma^2}{n\theta(L+\mu)^2}} > 0,
		\end{split}
	\end{equation*} 
	and $\sigma^2$ is the variance of local gradients at $t = 0$, i.e.,
	$$ \sigma^2 = \sum_{i=1}^n\left\|\nabla f_i(x^{(0)}) - \frac{1}{n}\sum_{j=1}^n\nabla f_j(x^{(0)})\right\|^2. $$
	Moreover, for all $t\geq 1$ and $i = 1,\dots,n$, we have
	\begin{equation}
		\label{eq:x-y-dist}
		\mathbb{E}[\|\tilde{x}_i^{(t)} - \tilde{y}^{(t)}\|^2] \leq \frac{D}{A_t},
	\end{equation}
	where $\tilde{ x}_{i}^{(t)}=A_t^{-1}\sum_{\tau=1}^{t}a_{\tau}{ x}_{i}^{(\tau)}$ and
	$ D = \frac{4nC}{\eta\gamma} + \frac{2a\sigma^2}{\theta(L+\mu)^2} > 0. $
\end{thm}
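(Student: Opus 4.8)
The plan is to read the auxiliary sequence $\{y^{(t)}\}$ in \eqref{eq:def-y} as the iterate produced by an \emph{inexact} centralized dual averaging recursion, and then to absorb the inexactness through a spectral analysis of the consensus errors whose dynamics are encoded by $\mathbf{M}$. The starting point is a gradient-tracking identity. Stacking the local variables into $\mathbf{z}^{(t)}=[(z_1^{(t)})^T,\dots,(z_n^{(t)})^T]^T$ and likewise $\mathbf{s}^{(t)},\mathbf{x}^{(t)}$, and writing $\overline{s}^{(t)}=\frac1n\sum_{i=1}^n s_i^{(t)}$, double stochasticity of $P^{(t-1)}$ applied to \eqref{s} telescopes against the initialization $s_i^{(0)}=\nabla f_i(x^{(0)})-\mu x^{(0)}$ to give $\overline{s}^{(t)}=\frac1n\sum_{i=1}^n(\nabla f_i(x_i^{(t)})-\mu x_i^{(t)})$ for every $t$. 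Averaging \eqref{z} then yields $\overline{z}^{(t)}=\overline{z}^{(t-1)}+a_t\overline{s}^{(t-1)}$, hence $\overline{z}^{(t)}=\sum_{\tau=0}^{t-1}a_{\tau+1}\cdot\frac1n\sum_{i=1}^n(\nabla f_i(x_i^{(\tau)})-\mu x_i^{(\tau)})$. This is exactly the centralized accumulation from Section~\ref{sec:main-results}, except that each gradient is taken at the local point $x_i^{(\tau)}$ rather than at a common point, so $y^{(t)}$ solves the centralized subproblem perturbed only by the consensus gaps $\{x_i^{(\tau)}-y^{(\tau)}\}$.

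First I would rerun the descent argument behind Theorem~\ref{exact_thm} for $\{y^{(t)}\}$, carrying the perturbation explicitly. Using the linearization \eqref{eq:def-ell} at the local points, strong convexity from Assumption~\ref{problemassumption}(ii), and $L$-smoothness \eqref{eq:Lip-origin} to control the discrepancy between evaluating $\nabla f_i$ at $x_i^{(\tau)}$ versus at $y^{(\tau)}$, the dual-averaging telescoping produces (in expectation) an inequality of the form
\begin{equation*}
A_t\big(\mathbb{E}[F(\tilde y^{(t)})]-F(x^*)\big)\leq d(x^*)-\Big(\tfrac{1}{a}-2L+\mu\Big)\sum_{\tau=1}^{t}a_\tau\,\mathbb{E}\|y^{(\tau)}-y^{(\tau-1)}\|^2+(2L-\mu)\sum_{\tau=1}^{t}\frac{a_\tau}{n}\sum_{i=1}^{n}\mathbb{E}\|x_i^{(\tau)}-y^{(\tau)}\|^2 .
\end{equation*}
Thus the raw descent margin on the primal increments is $\tfrac1a-2L+\mu$, while the consensus gaps enter with coefficient $2L-\mu$ (consistent with the numerator of $C$).

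The crux is to bound the weighted consensus errors. Define $U^{(t)}=(\mathbb{E}\|\mathbf{z}^{(t)}-\mathbf{1}\otimes\overline{z}^{(t)}\|^2)^{1/2}$ and $V^{(t)}=(\mathbb{E}\|\mathbf{s}^{(t)}-\mathbf{1}\otimes\overline{s}^{(t)}\|^2)^{1/2}$, noting $U^{(0)}=0$ and $V^{(0)}=\sigma$ by the definition of $\sigma^2$. Subtracting the averaged recursions from \eqref{z}--\eqref{s} and using $P^{(t-1)}\mathbf{1}=\mathbf{1}$ leaves the deviations acted on by $P^{(t-1)}-\mathbf{1}\mathbf{1}^T/n$; taking conditional expectations and invoking Assumption~\ref{graphconnected}(ii)--(iii) with Minkowski's inequality contracts these by $\beta$, while the driver of $V^{(t)}$ is $(L+\mu)(\mathbb{E}\|\mathbf{x}^{(t)}-\mathbf{x}^{(t-1)}\|^2)^{1/2}$ via \eqref{eq:Lip-origin}. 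A sensitivity estimate for the $(1+A_t\mu)$-strongly convex subproblem \eqref{primal}, comparing the minimizers at times $t$ and $t-1$, bounds the primal increment by the dual deviations; after rescaling $V^{(t)}$ by $a_{t+1}$ to absorb the geometric growth $a_{t+1}/a_t=(1-a\mu)^{-1}$, the pair $(U^{(t)},a_{t+1}V^{(t)})$ obeys a linear recursion with transition matrix exactly $\mathbf{M}$ in \eqref{eq:matrix-M}. By Lemma~\ref{lem:M-radius}, condition \eqref{eq:condition1-a} gives $\nu=\rho(\mathbf{M})\sqrt{1-a\mu}<1$, so summing geometrically bounds $\sum_\tau a_\tau\,\mathbb{E}\|x_i^{(\tau)}-y^{(\tau)}\|^2$ by $\sigma^2$, with $\theta=(1-a\mu)(1-\nu^2)$ from the linear sum (seeding the residual $\sigma^2$-term of $C$) and $\eta=(1-a\mu)(1-\nu)^2$ from the squared sum (governing the feedback of the increments). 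Substituting back, the positive consensus term becomes roughly $(2L-\mu)\cdot\tfrac{2}{\eta}=\tfrac{4L-2\mu}{\eta}$ times the increments, so the net margin equals $\gamma$ of \eqref{2ndcondition}; imposing $\gamma>0$ lets the feedback be absorbed into the negative term, leaving precisely $C=d(x^*)+a(2L-\mu)\sigma^2/(n\theta(L+\mu)^2)$ and hence \eqref{convergence}. Finally, \eqref{eq:x-y-dist} follows more directly: since $x_i^{(t)}$ and $y^{(t)}$ minimize the same $(1+A_t\mu)$-strongly convex objective differing only in the linear term, $\|x_i^{(t)}-y^{(t)}\|\leq(1+A_t\mu)^{-1}\|z_i^{(t)}-\overline{z}^{(t)}\|$, and feeding the established bound on $U^{(t)}$, together with Jensen's inequality for the running average $\tilde{x}_i^{(t)}$ and the bound \eqref{convergence}, yields the claimed $D/A_t$.

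The main obstacle is closing the loop between the two analyses under geometrically growing weights: the consensus errors enter the descent inequality, yet are themselves driven by the primal increments $\|\mathbf{x}^{(t)}-\mathbf{x}^{(t-1)}\|$, which depend on the dual dynamics and on the growth of $a_t$. The two delicate steps are (i) the sensitivity bound for \eqref{primal} that cleanly separates the contribution of the dual-variable deviation from that of the weight increment $a_t$, and (ii) verifying that after the $a_{t+1}$-rescaling the transition matrix is \emph{exactly} $\mathbf{M}$, so that Lemma~\ref{lem:M-radius} applies and the factor $1/\eta$ entering $\gamma$ is the correct one to guarantee absorption. Everything hinges on $\nu<1$, which is precisely why \eqref{eq:condition1-a} is imposed.
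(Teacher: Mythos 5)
Your proposal follows essentially the same route as the paper: the gradient-tracking identities for $\overline{s}^{(t)},\overline{z}^{(t)}$, the perturbed dual-averaging descent inequality for $\{y^{(t)}\}$, the Lipschitz bound $\|x_i^{(t)}-y^{(t)}\|\leq(1+\mu A_t)^{-1}\|z_i^{(t)}-\overline{z}^{(t)}\|$, the $a_{t+1}$-rescaled linear recursion with transition matrix $\mathbf{M}$ yielding the $\eta$ and $\theta$ factors, and the absorption of the consensus feedback via $\gamma>0$ are all exactly the paper's argument. The only loose end is that for \eqref{eq:x-y-dist} one needs the intermediate by-product $\sum_{\tau}a_\tau\mathbb{E}[\|\Delta\mathbf{y}^{(\tau-1)}\|^2]\leq 2nC/\gamma$ retained from the descent inequality, not \eqref{convergence} itself, but this is implicit in your absorption step.
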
  

The proof of Theorem~\ref{inexact_thm} is postponed to Appendix~\ref{appendix:pf-main}. Theorem~\ref{inexact_thm} can be regarded as a decentralized counterpart of Theorem~\ref{exact_thm}. Due to the presence of consensus error in the decentralized setting, Theorem~\ref{inexact_thm} requires a more delicate choice of $a$ for convergence. 
It is shown in \cite[Appendix E]{liu2021decentralized} that there exists an $\bar{a}>0$ such that any $a\in(0,\bar{a})$ satisfies the conditions in Theorem~\ref{inexact_thm}. Moreover, $\bar{a}$ is roughly in the order $\mathcal{O}((1-\beta)^2/L)$. 


As a consequence of Theorem~\ref{inexact_thm}, we show in Corollary \ref{coro:1} that Algorithm~\ref{LinDDA} attains global linear convergence if $\mu>0$. Its proof is given in Appendix~\ref{appendix:pf-coro1}.
\begin{corollary}\label{coro:1}
	Suppose that the premise of Theorem~\ref{inexact_thm} holds. If $\mu > 0$, then for all $t\geq 1$ and $i = 1,\dots,n$, we have
	\begin{equation}
		\label{eq:linear-conv}
		\mathbb{E}[\|\tilde{x}_i^{(t)} - x^*\|^2] \leq \frac{2}{a}\left(\frac{2C}{\mu} + D\right)(1 - a\mu)^{t},
	\end{equation}
	where $\tilde{ x}_{i}^{(t)}=A_t^{-1}\sum_{\tau=1}^{t}a_{\tau}{ x}_{i}^{(\tau)}$ and $C, D$ are positive constants given in Theorem~\ref{inexact_thm}.
\end{corollary}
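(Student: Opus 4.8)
The plan is to control $\mathbb{E}[\|\tilde{x}_i^{(t)} - x^*\|^2]$ by inserting the auxiliary iterate $\tilde{y}^{(t)}$ and bounding the two resulting pieces with the estimates already established in Theorem~\ref{inexact_thm}. I would begin with the elementary inequality $\|u+v\|^2 \leq 2\|u\|^2 + 2\|v\|^2$, which gives
\[
\|\tilde{x}_i^{(t)} - x^*\|^2 \leq 2\|\tilde{x}_i^{(t)} - \tilde{y}^{(t)}\|^2 + 2\|\tilde{y}^{(t)} - x^*\|^2 .
\]
Taking expectations, the problem decouples into a consensus-error term and an optimality-gap term, each of which is handled separately.

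The consensus-error term is immediate: the bound $\mathbb{E}[\|\tilde{x}_i^{(t)} - \tilde{y}^{(t)}\|^2] \leq D/A_t$ in~\eqref{eq:x-y-dist} applies directly. For the optimality-gap term I would exploit strong convexity. By Assumption~\ref{problemassumption}(ii) each $f_i$ is $\mu$-strongly convex on $\mbox{dom}(h)$, so the average $f$ is $\mu$-strongly convex, and since $h$ is convex the composite $F = f + h$ is $\mu$-strongly convex on $\mbox{dom}(h)$. As $x^*$ minimizes $F$, the first-order optimality condition $0 \in \partial F(x^*)$ combined with strong convexity yields the quadratic-growth inequality
\[
F(\tilde{y}^{(t)}) - F(x^*) \geq \frac{\mu}{2}\|\tilde{y}^{(t)} - x^*\|^2 ,
\]
which is valid because $\tilde{y}^{(t)} \in \mbox{dom}(h)$, being a convex combination of the points $y^{(\tau)}$ defined in~\eqref{eq:def-y}. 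Rearranging, taking expectations, and invoking the convergence estimate~\eqref{convergence} gives $\mathbb{E}[\|\tilde{y}^{(t)} - x^*\|^2] \leq (2/\mu)\,\mathbb{E}[F(\tilde{y}^{(t)}) - F(x^*)] \leq 2C/(\mu A_t)$.

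Combining the two bounds produces $\mathbb{E}[\|\tilde{x}_i^{(t)} - x^*\|^2] \leq (2/A_t)\big(2C/\mu + D\big)$, which is the desired estimate up to the conversion of the $1/A_t$ decay into a linear rate. Since $\mu > 0$, this last step is supplied verbatim by Theorem~\ref{exact_thm}(i), namely $1/A_t \leq (1-a\mu)^t/a$; substituting it yields exactly~\eqref{eq:linear-conv}. I expect the whole argument to be short and largely mechanical once the quadratic-growth inequality is available. The only point demanding care is the justification of that inequality — verifying that $F$ inherits the modulus $\mu$ from the individual $f_i$ and that $0$ is a valid subgradient of $F$ at $x^*$ — and this should be spelled out rather than taken for granted, even though it is standard.
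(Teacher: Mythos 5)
Your proposal is correct and follows essentially the same route as the paper: split $\|\tilde{x}_i^{(t)}-x^*\|^2$ via $\tilde{y}^{(t)}$ using $\|u+v\|^2\le 2\|u\|^2+2\|v\|^2$, bound the consensus piece with~\eqref{eq:x-y-dist}, bound the optimality piece with the $\mu$-quadratic-growth of $F$ at $x^*$ together with~\eqref{convergence}, and convert $1/A_t$ to $(1-a\mu)^t/a$. The constants you obtain, $(2/A_t)(2C/\mu + D)$, match~\eqref{eq:linear-conv} exactly, and your explicit justification of why $F$ inherits the modulus $\mu$ and why $\tilde{y}^{(t)}\in\mbox{dom}(h)$ is a welcome bit of care that the paper leaves implicit.
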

To the best of our knowledge, Corollary~\ref{coro:1} provides the first linear convergence result for solving Problem \eqref{OPT} in stochastic networks. It is also the first linear convergence result for any DDA-type algorithms.

{
\begin{remark}
	Corollary~\ref{coro:1} requires $h$ to be uniform for all agents. From a technical perspective, this is necessarily made to ensure that (38) in the proof of Theorem 2 and Lemma 5 remain valid. The authors in \cite{alghunaim2020decentralized} proved that with agent specific non-smooth regularization, linear convergence cannot be achieved (in the worst case) for decentralized composite optimization even in time-invariant networks.
\end{remark}}

As Corollary~\ref{coro:1} also holds when Algorithm~\ref{LinDDA} is applied to solving Problem~\eqref{OPT} with $h\equiv 0$ in time-invariant networks, it would be interesting to compare our linear convergence result with those of decentralized algorithms that also converge linearly in this special case. Based on the above remark on $a$, one can observe that the rate of linear convergence in Corollary~\ref{coro:1} is roughly $\mathcal{O}((1-\rho)^t)$, where $\rho = \Theta\left((1-\beta)^2/\kappa\right)$. This rate is sub-optimal for decentralized convex smooth optimization in time-invariant networks, where a better rate $\rho = \Theta\left((1-\beta)/\kappa\right)$ is achieved by, for example,~\cite{xu2020unified}. 
{This is mainly due to the consensus-based gradient-tracking mechanism.
DIGing \cite{nedic2017achieving}, Harnessing \cite{qu2017harnessing}, and AugDGM \cite{xu2017convergence} are based on similar strategies, for which the convergence rate is $\Theta((1-\beta)^2/\kappa)$. Indeed, under the algorithmic framework based on operator splitting \cite{xu2020unified}, the gradient-tracking mechanism essentially leads to a contraction matrix $I-(I-P)^2$ on the recursion.
The convergence rates of decentralized primal-dual algorithms, e.g., EXTRA \cite{shi2015extra}, NIDS \cite{li2019decentralized}, typically have better dependence on the network topology, that is, $\Theta((1-\beta)/\kappa)$. However, they are limited to time-invariant networks to the best of our knowledge.}



For the case $\mu = 0$, Theorem~\ref{inexact_thm} implies that Algorithm~\ref{LinDDA} has a global $\mathcal{O}(1/t)$ rate of convergence. In particular, we have the following corollary whose proof is presented in Appendix~\ref{sec:proof-corollary2}.
\begin{corollary}\label{coro:2}
	Suppose that the premise of Theorem~\ref{inexact_thm} holds. If $\mu = 0$, then for all $t\geq 1$ and $i = 1,\dots,n$, we have
	\begin{align}
		\mathbb{E}[F(\tilde{y}^{(t)})]  -F(x^*) & \leq  \frac{C}{at}, \label{eq:y-sublinear} \\
		\mathbb{E}[\|\tilde{x}_i^{(t)} - \tilde{y}^{(t)}\|^2] & \leq \frac{D}{at}, \label{eq:x-to-y}
	\end{align}
	where $\tilde{y}^{(t)}=\frac{1}{t}\sum_{\tau=1}^{t}y^{(\tau)}$, $\tilde{x}_{i}^{(t)}=\frac{1}{t}\sum_{\tau=1}^{t}{x}_{i}^{(\tau)}$, and $C,D$ are positive constants given in Theorem~\ref{inexact_thm}. In addition, if $h \equiv 0$ in Problem~\eqref{OPT}, $d(x) = \|x\|^2/2$, and
	\begin{equation}
		\label{x_condition_smooth}
		\begin{split}
			\frac{1}{a}&> 2L \cdot \max \left\{  \frac{\beta}{(1-\beta)^2} , 1 +\frac{6}{(1-\nu)^2}\right\},
		\end{split}
	\end{equation}
	where $\beta$ and $\nu$ are given in~\eqref{eq:sigma-bound} and~\eqref{eq:def-nu}, respectively,
	then we further have
	\begin{equation}
		\label{eq:x-h=0}
		\mathbb{E}[F(\tilde{x}_{i}^{(t)})]-F(x^*)
		\leq \frac{1}{t}\left( \frac{n\lVert x^* \rVert^2}{2a}+ \frac{6 \sigma^2}{L\big(1-\nu^2\big)}\right).
	\end{equation}
\end{corollary}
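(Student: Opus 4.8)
The plan is to dispatch the first two bounds immediately and then build a dedicated argument for the smooth case. For \eqref{eq:y-sublinear} and \eqref{eq:x-to-y} I would simply specialize Theorem~\ref{inexact_thm}: when $\mu=0$ the algorithm sets $a_t\equiv a$ and hence $A_t=at$ (equivalently, this is part (ii) of Theorem~\ref{exact_thm}). Since the premise of Theorem~\ref{inexact_thm} is assumed to hold, substituting $A_t=at$ into \eqref{convergence} and \eqref{eq:x-y-dist} yields the right-hand sides $C/(at)$ and $D/(at)$ verbatim, so no new work is needed.

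The substance is \eqref{eq:x-h=0}, which I would prove by a self-contained decentralized dual averaging analysis tailored to the smooth case, rather than by combining the two previous bounds (whose constants differ and whose target quantity is weaker). First I would record the structural identities forced by $\mu=0$, $h\equiv 0$, and $d(x)=\|x\|^2/2$. The primal step \eqref{primal} collapses to $x_i^{(t)}=-z_i^{(t)}$, so that $y^{(t)}=-\overline{z}^{(t)}=\tfrac1n\sum_i x_i^{(t)}$ is exactly the network average of the primal iterates, and therefore $\tilde y^{(t)}=\tfrac1n\sum_i\tilde x_i^{(t)}$. Using double stochasticity of $P^{(t)}$ and the initialization $s_i^{(0)}=\nabla f_i(x^{(0)})$, the protocol \eqref{s} produces exact gradient tracking $\overline{s}^{(t)}=\tfrac1n\sum_i\nabla f_i(x_i^{(t)})=:\overline{g}^{(t)}$, and averaging \eqref{z} gives $\overline{z}^{(t)}=a\sum_{\tau=0}^{t-1}\overline{g}^{(\tau)}$. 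Thus the averaged trajectory $\{y^{(t)}\}$ is precisely a centralized dual averaging run driven by the aggregated-gradient sequence $\{\overline{g}^{(\tau)}\}$.

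With these identities in hand, I would run the smooth dual averaging estimate on the averaged dynamics, corrected for the fact that $\overline{g}^{(\tau)}$ collects gradients $\nabla f_i(x_i^{(\tau)})$ evaluated at disagreeing points. Concretely, by convexity and Jensen's inequality I bound $F(\tilde x_i^{(t)})-F(x^*)\le \tfrac1{at}\sum_\tau a\langle\nabla f(x_i^{(\tau)}),x_i^{(\tau)}-x^*\rangle$, and then split the inner products into (a) the genuine dual-averaging regret $\sum_\tau a\langle\overline{g}^{(\tau)},y^{(\tau)}-x^*\rangle$, which telescopes against the regularizer $d$ and contributes the $d(x^*)=\|x^*\|^2/2$ term, and (b) mismatch terms $\langle\nabla f(x_i^{(\tau)})-\overline{g}^{(\tau)},x_i^{(\tau)}-x^*\rangle$ and $\langle\overline{g}^{(\tau)},x_i^{(\tau)}-\overline{x}^{(\tau)}\rangle$ governed by the consensus errors $\|z_i^{(\tau)}-\overline{z}^{(\tau)}\|$ and $\|s_i^{(\tau)}-\overline{s}^{(\tau)}\|$. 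The squared-gradient residue left by the telescoping is absorbed using $L$-smoothness via $\|\nabla f(\overline{x}^{(\tau)})\|^2\le 2L\big(F(\overline{x}^{(\tau)})-F(x^*)\big)$, which is exactly why the step-size branch $1+6/(1-\nu)^2$ of \eqref{x_condition_smooth} is imposed; meanwhile the consensus errors obey a geometric recursion driven by $\beta$ and $\nu$ (the same mechanism encoded in $\mathbf{M}$ and Lemma~\ref{lem:M-radius}), whose summation in expectation, seeded by the initial gradient disagreement $\sigma^2$, produces the $\sigma^2/\big(L(1-\nu^2)\big)$ term, with the branch $\beta/(1-\beta)^2$ of \eqref{x_condition_smooth} guaranteeing convergence of that series. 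The factor $n$ on $\|x^*\|^2$ and the appearance of the summed quantity $\sigma^2$ reflect the per-agent accounting of network-aggregated consensus quantities.

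I expect the main obstacle to be precisely this projection--consensus coupling: because $\overline{z}^{(t)}$ aggregates gradients evaluated at distinct local iterates, the averaged sequence is not a clean centralized trajectory, and the mismatch and consensus terms must be controlled tightly enough that $L$-smoothness can absorb them without degrading the $\mathcal{O}(1/t)$ rate. The delicate bookkeeping is to choose the weights in the Young's and Cauchy--Schwarz splits so that the residual squared-gradient mass is strictly dominated by the telescoped descent under \eqref{x_condition_smooth}, and to verify that the geometric consensus series closes to the stated constant $6\sigma^2/\big(L(1-\nu^2)\big)$ rather than merely to an unspecified $\mathcal{O}\big(\sigma^2/(1-\nu^2)\big)$ bound.
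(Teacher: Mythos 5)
Your treatment of \eqref{eq:y-sublinear} and \eqref{eq:x-to-y} is exactly the paper's: specialize Theorem~\ref{inexact_thm} with $A_t=at$. Your structural observations for the smooth case are also the right ones and match the paper: with $\mu=0$, $h\equiv 0$, $d(x)=\|x\|^2/2$ the primal step gives $x_i^{(\tau)}=-z_i^{(\tau)}$ and $y^{(\tau)}=-\overline{z}^{(\tau)}=\overline{x}^{(\tau)}$, and Lemma~\ref{conservation_property} gives exact tracking of the averaged gradient.

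However, your decomposition for \eqref{eq:x-h=0} has a genuine gap. You propose to bound $F(\tilde x_i^{(t)})-F(x^*)$ by the regret $\sum_\tau a\langle\nabla f(x_i^{(\tau)}),x_i^{(\tau)}-x^*\rangle$ and to split off mismatch terms of the form $\langle\nabla f(x_i^{(\tau)})-\overline{g}^{(\tau)},x_i^{(\tau)}-x^*\rangle$. The first factor is indeed a consensus-error-sized vector, but it is paired with $x_i^{(\tau)}-x^*$, whose norm is not controlled anywhere in the $\mu=0$ analysis; any Young/Cauchy--Schwarz split reintroduces $\|x_i^{(\tau)}-x^*\|^2$, and with no strong convexity there is nothing to absorb it into. (A similar issue arises from the one-step lag: the dual-averaging machinery of Lemma~\ref{lem:descent} controls $\langle\overline{g}^{(\tau-1)},y^{(\tau)}-x^*\rangle$, not $\langle\overline{g}^{(\tau)},y^{(\tau)}-x^*\rangle$, and bridging the indices again pairs a small vector with $y^{(\tau)}-x^*$.) The paper sidesteps this entirely: it bounds the \emph{gap to the average iterate}, $\sum_{i}\big(F(x_i^{(\tau)})-F(y^{(\tau)})\big)$, using only smoothness and convexity applied to iterate differences (inequality \eqref{unconstrained}); the linear term $\sum_i\langle\overline{g}^{(\tau)},x_i^{(\tau)}-y^{(\tau)}\rangle$ then cancels \emph{exactly} upon summing over $i$ because $y^{(\tau)}=\overline{x}^{(\tau)}$, leaving only $2L\|\mathbf{y}^{(\tau)}-\mathbf{x}^{(\tau)}\|^2$, which Lemma~\ref{consensus_error_lemma} converts into $\tfrac{4L}{(1-\nu)^2}\sum_\tau\mathbb{E}[\|\Delta\mathbf{y}^{(\tau-1)}\|^2]$ plus the $\sigma^2$ term. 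This is then \emph{added} to the intermediate estimate \eqref{eq:last} from the proof of Theorem~\ref{inexact_thm} (scaled by $n/a$), and condition \eqref{x_condition_smooth} makes the combined coefficient of $\sum_\tau\mathbb{E}[\|\Delta\mathbf{y}^{(\tau-1)}\|^2]$ non-positive. In particular, the branch $1+6/(1-\nu)^2$ of \eqref{x_condition_smooth} is not used to absorb squared gradients via $\|\nabla f(\overline{x}^{(\tau)})\|^2\le 2L(F(\overline{x}^{(\tau)})-F(x^*))$ as you suggest; no such inequality appears, and the telescoping of Lemma~\ref{lem:descent} produces negative $\|\Delta y\|^2$ terms rather than positive squared-gradient residues. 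To repair your argument you would need to replace the mismatch terms involving $x_i^{(\tau)}-x^*$ with the paper's exact-cancellation step, at which point you recover the paper's proof.
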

Similar to some existing works (e.g.,~\cite{wai2017decentralized}), we can only ensure the $\mathcal{O}(1/t)$ rate for the objective value at the auxiliary sequence $\{\tilde{y}^{(t)}\}_{t\geq 1}$ and the distance of each agent's local estimate $\tilde{x}_i^{(t)}$ to $\tilde{y}^{(t)}$ when $h\not\equiv 0$; see~\eqref{eq:y-sublinear} and~\eqref{eq:x-to-y} respectively. The major difficulty is that we cannot derive $\frac{1}{n}\sum_{i=1}^{n}{x}_i^{(t)} = y^{(t)}$ when $h \not\equiv 0$, which prevents us from getting~\eqref{convergence:xtoy}. It remains open whether the $\mathcal{O}(1/t)$ rate for the objective value at $\{\tilde{x}_i^{(t)}\}_{t\geq 1}$, as in~\eqref{eq:x-h=0}, can be established when $h\not\equiv 0$ without additional assumptions. We leave it as future work.

\section{Numerical Experiments}
\label{sec:numerical}

For the experiments, we consider the decentralized LASSO problem \cite{wai2017decentralized} and the decentralized sparse logistic regression problem~\cite{alghunaim2019linearly}.
We present numerical results of Algorithm~\ref{LinDDA} (named as DDA below), and compare it with the following algorithms:

\textit{ i)} Proximal gradient exact first-order algorithm (PG-EXTRA) in \cite{shi2015proximal}:
	\begin{equation*}
		\begin{split}
			\mathbf{z}^{(t)} =& \mathbf{z}^{(t-1)}-\mathbf{x}^{(t-1)}+\tilde{\mathbf{P}}(2\mathbf{x}^{(t-1)} -\mathbf{x}^{(t-2)} )\\
			&-a(\nabla^{(t-1)}-\nabla^{(t-2)}), \\
			\mathbf{x}^{(t)} =& \mathrm{Prox}_{\mathbf{h}}^a(\mathbf{z}^{(t)}),
		\end{split}
	\end{equation*}
	where $\tilde{\mathbf{P}}=\frac{(I+P)\otimes I}{2}$,  $\mathbf{h}(\mathbf{x})=\sum_{i=1}^{n}h(x_i)$, and
$
		\mathrm{Prox}_{\mathbf{h}}^a(\mathbf{z}):=\argmin_{\mathbf{x}\in\mathbb{R}^{mn}} \left\{  \mathbf{h}(\mathbf{x})+\frac{1}{2a}\lVert \mathbf{x}-\mathbf{z} \rVert^2  \right\}.
$
	
	\textit{ii)} Proximal primal-dual diffusion (P2D2) algorithm in \cite{alghunaim2019linearly}:
	\begin{equation*}
		\begin{split}
			\mathbf{z}^{(t)} =& \left(I-\alpha\mathbf{B}\right)\mathbf{z}^{(t-1)}+\left(I-\mathbf{B}\right)(\mathbf{x}^{(t-1)} -\mathbf{x}^{(t-2)} )\\
			&-a(\nabla^{(t-1)}-\nabla^{(t-2)}),\\
			\mathbf{x}^{(t)} =& \mathrm{Prox}_{\mathbf{h}}^a(\mathbf{z}^{(t)}),
		\end{split}
	\end{equation*}
	where $\mathbf{B}=\frac{(I-P)\otimes I}{2}$.
	
	\textit{iii)} Distributed subgradient method (DSM) in \cite{lobel2010distributed}:
	\begin{equation*}
		\begin{split}
			\mathbf{x}^{(t)} &= \mathbf{P}^{(t-1)}\mathbf{x}^{(t-1)}-a_{t-1}\mathbf{r}^{(t-1)},\\
		\end{split}
	\end{equation*}
	where $\mathbf{r}^{(t)} \in \partial \mathbf{F}(\mathbf{x}^{(t)})$ and $\mathbf{F}(\mathbf{x})=\sum_{i=1}^{n}F(x_i)$.
	
	\textit{iv)} Conventional DDA (named as C-DDA below) in \cite{duchi2011dual}:
	\begin{equation*}
		\begin{split}
			\mathbf{z}^{(t)} =& \mathbf{P}^{(t-1)}\mathbf{z}^{(t-1)}+\mathbf{r}^{(t-1)},\\
			\mathbf{x}^{(t)} =& \argmin_{\mathbf{x}\in\mathbb{R}^{mn}}  \left\{ a_{t-1}\langle 	\mathbf{z}^{(t)} , 	\mathbf{x}  \rangle+ \mathbf{d}(\mathbf{x})\right \},
		\end{split}
	\end{equation*}
	where $\mathbf{d}(\mathbf{x})=\sum_{i=1}^{n}d(x_i)$.

We note that when applied to solve Problem~\eqref{OPT} in stochastic networks, PG-EXTRA and P2D2 have no convergence guarantees and DSM and C-DDA have sublinear convergence in theory.

\subsection{Strongly Convex Problems}
The aforementioned algorithms are applied to the following problem:
\begin{equation*}
	\min_{x\in\mathbb{R}^m}\frac{1}{n}\sum_{i=1}^{n}\frac{1}{2}\lVert b_i-C_i x  \rVert^2,\quad \mathrm{s.t.}\,\, \lVert x \rVert_1\leq R,
\end{equation*}
where $R>0$ is a constant, and $(C_i, b_i)$ represents the data tuple available to agent $i$ with $C_i\in \mathbb{R}^{60\times 50}$ and $b_i\in \mathbb{R}^{60}$.
The data is randomly generated according to the setting by~\cite{li2019decentralized}.
Firstly, a sparse signal $x^\sharp \in \mathbb{R}^{50}$ is randomly generated, where the probability for each element being nonzero is $0.25$. 
Then, each $C_i$ is randomly generated and then normalized such that Assumption~\ref{problemassumption} holds with $L=1$ and $\mu=0.5$. 
Set $R=1.1\lVert x^\sharp \rVert_1$.
produced based on $b_i = C_i x^\sharp+\epsilon_i$, where $\epsilon_i$ is a random noise vector.


We consider two common configurations of stochastic communication networks. The first one is \emph{Bernoulli networks} \cite{kar2008sensor}, where a fixed graph is first generated and at any time $t$, each edge of the fixed graph is sampled with probability $\iota\in(0,1)$, which results in a random sub-graph of the fixed graph. In our experiment, we generate a fixed graph in the same way as~\cite{shi2014linear}, where the sparsity parameter $\xi$, i.e., the ratio between the number of edges in the generated fixed graph and the number of edges in the complete graph, is chosen to be $0.2$. Based on each fixed graph, we generate two Bernoulli networks with $\iota$ set to be $0.05$ and $0.1$, respectively. The second one is \emph{randomized gossip networks} \cite{boyd2006randomized}, where only a single edge of a fixed graph is sampled at any time $t$. 
In particular, 
the probability to sample the link $(i,j)$ is set as $\frac{1}{n(\lvert \mathcal{N}_i \rvert+1)}$ with $\lvert \mathcal{N}_i \rvert$ representing the number of neighbors of $i$ in the supergraph at every time $t$.
In our experiment, we consider cycle graph, 2D grid, and complete graph as the fixed graphs for generating randomized gossip networks.

For all the tested algorithms, we evaluate their performance in terms of the relative square error (RSE) defined by $\frac{\sum_{i=1}^{n}\lVert x_i^{(t)}-x^*\rVert^2}{\sum_{i=1}^{n}\lVert x_i^{(0)}-x^*\rVert^2}$, where $x^*$ is identified by applying the centralized proximal gradient method~\cite{parikh2014proximal} to Problem \eqref{eq:exp} such that the norm of the difference of two consecutive iterates is less than $10^{-14}$. The algorithm by~\cite{condat2016fast} is used to perform projection onto $l_1$-norm ball.
All the algorithms are initialized with $x_{i}^{(0)}=0$ for all $i$.
The parameters for each algorithm are chosen in the following way. For DDA and C-DDA, we employ $d(x) = \lVert  x\rVert^2/2$. We choose $\alpha=0.5$ in P2D2 and set $a_t ={1}/{\sqrt{t+1}}$ for C-DDA. For the two groups of Bernoulli networks, we set the $a$ in DDA to be $0.1$ and
set $0.1$ for the step sizes in P2D2 and PG-EXTRA. For randomized gossip, we use $a=0.1$ for DDA, and set $10^{-4}$ for the step sizes in P2D2 and PG-EXTRA. Since DSM can not be applied to constrained problems, it is not considered in this setting.

The simulation results are plotted in Figure~\ref{fig:LASSO}.
In particular, the performance on Bernoulli networks and randomized gossip networks is presented in the first and the second column of Figure \ref{fig:LASSO}, respectively.
In the first column, the bottom plot demonstrates the performance in time-invariant networks that is used for generating Bernoulli networks. 
Although P2D2 and PG-EXTRA demonstrate a similar performance with DDA on time-invariant networks, they do not converge to the minimizer when applied to stochastic networks.
In line with our theoretical results, DDA linearly converges and outperforms C-DDA in all the network configurations.


\begin{figure}
	\begin{center}
		\includegraphics[width=0.48\textwidth]{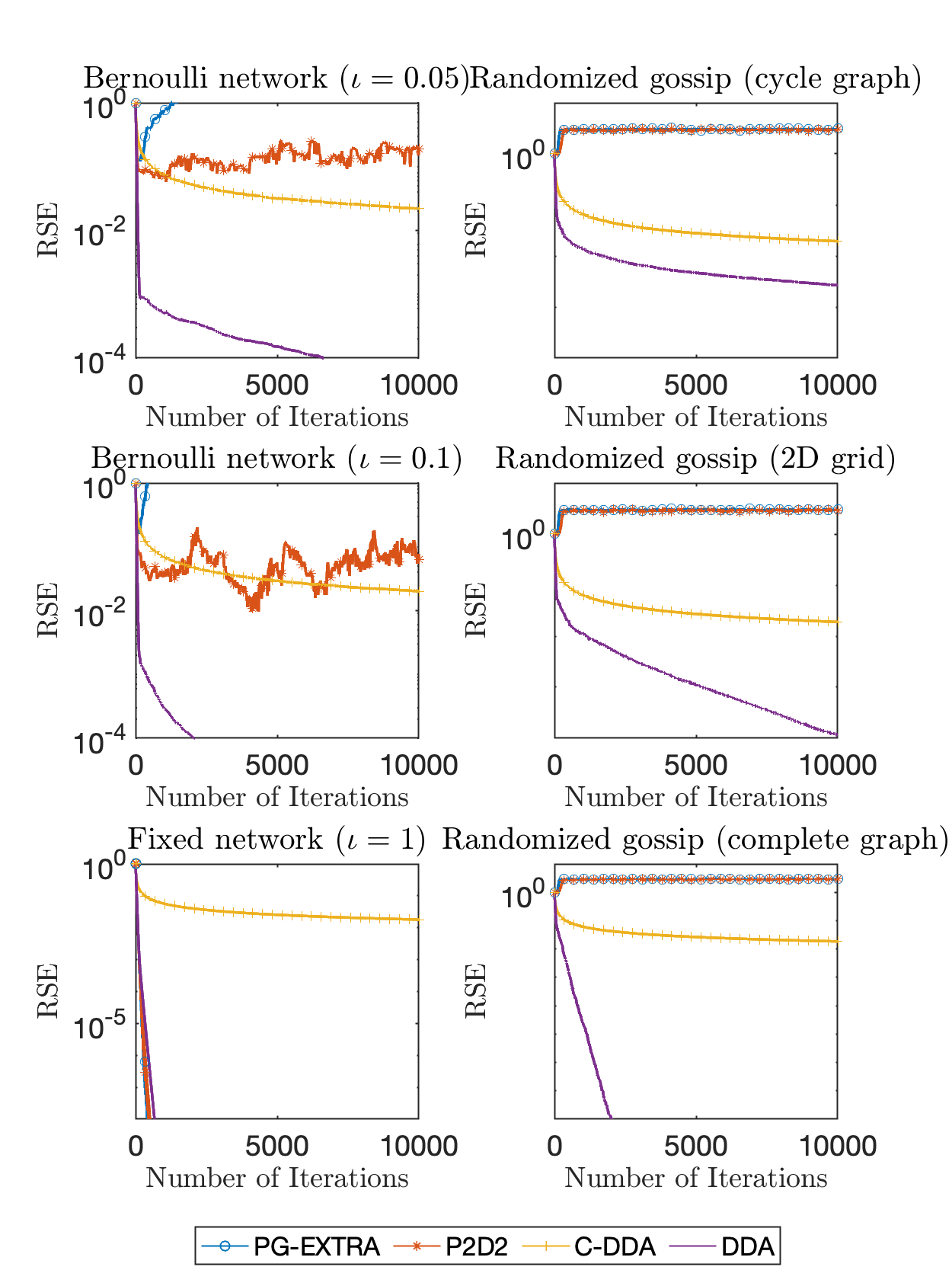}    
		\caption {Comparison results for decentralized LASSO in different network configurations.}
		\label{fig:LASSO}
	\end{center}
\end{figure}


\subsection{General Convex Problems}
{
The following decentralized sparse logistic regression problem is considered
\begin{equation}\label{eq:exp}
	\min_{x\in\mathbb{R}^m}\frac{1}{n}\sum_{i=1}^{n}f_i(x)+\phi\lVert x \rVert_1,
\end{equation}
where
\begin{equation*}
	f_i(x)=\frac{1}{m_i}\sum_{j=1}^{m_i}\ln (1+\exp(-y_j^i M_j^{i\mathrm{T}} x)),
\end{equation*}
and $\{M_{j}^i, y_{j}^i \}_{j=1}^{m_i}$ are data samples private to agent $i$. In our experiment, we set $\phi=0.001$, and use \emph{Spambase} data set in the UCI Machine Learning Repository~\cite{Dua:2019} to generate our problem instance. In particular, we extract $3000$ out of the total $4601$ samples in the original data set and evenly distribute them to the $n = 30$ agents, i.e., $m_i = 100$ for all $i$. }

Two types of stochastic communication networks are considered. For Bernoulli networks, we generate a fixed graph with the sparsity parameter $0.4$. Based on it, we construct two Bernoulli networks by setting $\iota=0.1$ and $\iota=0.2$, respectively. In the second setting, we also consider cycle graph, 2D grid, and complete graph as the fixed graphs for generating randomized gossip networks.
We identify $x^*$ by using the centralized proximal gradient method, where the stopping criterion is set as the norm of the difference of two consecutive iterates smaller than $10^{-14}$. 
The performance of all the tested algorithms is evaluated in terms of the suboptimality defined by $F(n^{-1}\sum_{i=1}^{n}x_i^{(t)}) - F(x^*)$.
All the algorithms are initialized with $x_{i}^{(0)}=0$ for all agents $i$. 
The parameters of each algorithm are chosen properly to reflect their performance. For DDA and C-DDA, we simply choose $d(x) = \lVert  x\rVert^2/2$. We choose $\alpha=0.5$ in P2D2 and set $a_t ={1}/{\sqrt{t+1}}$ for C-DDA and DSM. 
For the Bernoulli networks, i.e., those sampled from a supergraph with sparsity parameter $0.4$ (first column of Figure \ref{fig:Bernoulli}), we use the same $a = 0.2$ for DDA, P2D2, and PG-EXTRA.
For randomized gossip, we use $a=0.05$ for DDA, and set $10^{-4}$ for the step sizes in P2D2 and PG-EXTRA. We note that choosing a smaller step size in P2D2 and PG-EXTRA generally makes them more stabilizing. In fact, a larger step size will result in even worse behaviour of these two methods in randomized gossip networks.

\begin{figure}
	\begin{center}
		\includegraphics[width=0.48\textwidth]{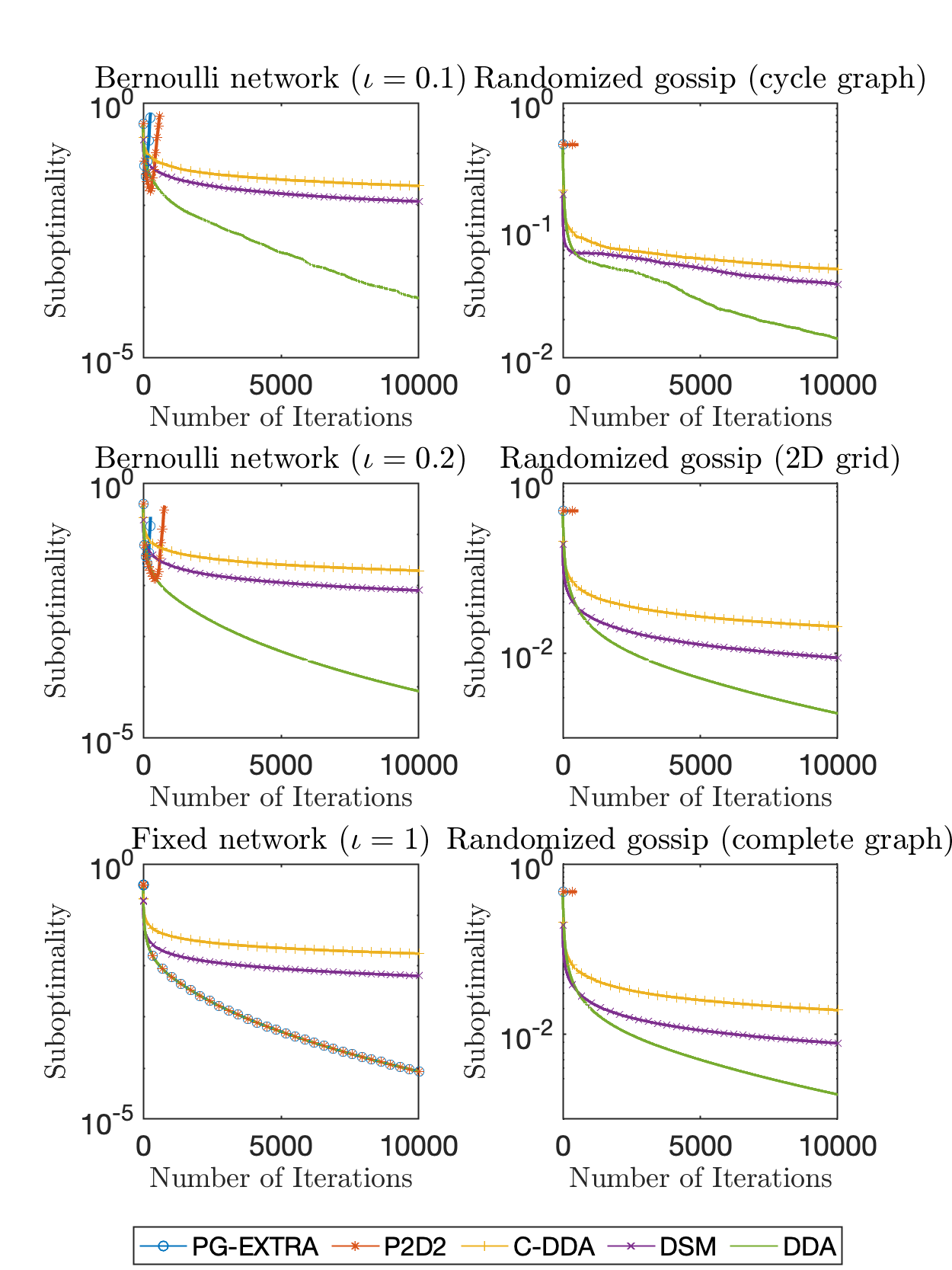}    
		\caption {Comparison results for decentralized logistic regression in different network configurations.}
		\label{fig:Bernoulli}
	\end{center}
\end{figure}

The simulation results are plotted in Figure~\ref{fig:Bernoulli}. Specifically, the first column of Figure \ref{fig:Bernoulli} presents the performance on Bernoulli networks and the second column shows the performance on randomized gossip networks. We note that the last plot of the first column is for time-invariant networks, which is the fixed graph for generating the Bernoulli networks in the first column.
One can observe that our DDA is substantially faster than DSM and C-DDA in all the network settings, which supports our theoretical development. In addition, while P2D2 and PG-EXTRA perform very similar to DDA on time-invariant networks, they both diverge when applied to stochastic networks. This suggests that decentralized algorithms that are designed for time-invariant networks may not work effectively in stochastic networks.

To summarize, the simulation results confirm our theoretical findings and demonstrate the superior performance of the proposed Algorithm~\ref{LinDDA} on both time-invariant and stochastic networks.

\section{Conclusion and Future Work}\label{sec:conclusion}
In this paper, we proposed a new decentralized algorithm for solving Problem \eqref{OPT} in stochastic networks. The proposed algorithm, based on the framework of dual averaging method, is facilitated by designing a novel dynamic averaging consensus protocol. To the best of our knowledge, this is the first linearly convergent DDA-type decentralized algorithm and also the first algorithm that attains global linear convergence for solving Problem \eqref{OPT} in stochastic networks.

As we remarked after Corollary \ref{coro:1}, it remains open whether Algorithm \ref{LinDDA} can be further improved such that it achieves the optimal rate of linear convergence when applied to the special setting of Problem \eqref{OPT}. Besides, it is unknown if $\mathcal{O}(1/t)$ convergence rate of the objective error at the local estimates can be established for the general convex case. {Another practical issue about decentralized optimization is the privacy risk due to information exchange among multiple agents}. We leave them as future research.

\appendices
\section{Overview of the Appendix and Preliminaries}
\label{appendix:notations}

In the Appendix, we begin with some notation to streamline the presentation. Then, we present the proof of Theorem~\ref{inexact_thm} and the proofs of Corollaries~\ref{coro:1} and~\ref{coro:2} in Appendix~\ref{appendix:pf-main} and \ref{appendix:pf-coro}, respectively. The proofs of supporting lemmas are postponed to Appendix \ref{supporting_lemmas}. 

First, we introduce the following notation:
\begin{equation}\label{eq:notation-1}
	\begin{split}
	&{\bf x}^{(t)}=\begin{bmatrix}
		x_{1}^{(t)}\\ \vdots \\ x_{n}^{(t)}
	\end{bmatrix}, \quad 
	{\bf s}^{(t)}=\begin{bmatrix}
		s_{1}^{(t)}\\ \vdots \\ s_{n}^{(t)}
	\end{bmatrix}, \quad 
	{\bf z}^{(t)}=\begin{bmatrix}
		z_{1}^{(t)}\\ \vdots \\ z_{n}^{(t)}
	\end{bmatrix},\\
	&{\bf \nabla}^{(t)}=\begin{bmatrix}
		\nabla f_1(x_{1}^{(t)})\\ \vdots \\ \nabla f_n(x_{n}^{(t)})
	\end{bmatrix}, \quad 
	{\bf y}^{(t)}=\begin{bmatrix}
		y^{(t)}\\ \vdots \\ y^{(t)}
	\end{bmatrix}, \quad  \overline{z}^{(t)}=\frac{1}{n}\sum_{i=1}^{n}z_i^{(t)},
\end{split}
\end{equation}
\begin{equation}\label{eq:notation-2}
	\overline{x}^{(t)} = \frac{1}{n}\sum_{i=1}^n x_i^{(t)}, \,\, \overline{g}^{(t)} = \frac{1}{n}\sum_{i=1}^n \nabla f_i(x_i^{(t)}), \,\, \overline{s}^{(t)}=\frac{1}{n}\sum_{i=1}^{n}s_i^{(t)}, 
\end{equation}
\begin{align}
	\tilde{\bf s}^{(t)} = {\bf s}^{(t)}-\mathbf{1}\otimes \overline{s}^{(t)}, \quad \tilde{\bf z}^{(t)} = {\bf z}^{(t)}-\mathbf{1}\otimes\overline{z}^{(t)}, \label{eq:notation-3} 
\end{align}
\begin{align}
	 \Delta{ \overline{x}}^{(t-1)} = {\overline{x}}^{(t)}-{ \overline{x}}^{(t-1)},\quad \Delta y^{(t-1)} = y^{(t)} - y^{(t-1)},
\end{align}
\begin{align}
	\Delta\mathbf{x}^{(t-1)} = \mathbf{x}^{(t)} - \mathbf{x}^{(t-1)}, \quad \Delta\mathbf{y}^{(t-1)} = \mathbf{y}^{(t)} - \mathbf{y}^{(t-1)}, \label{eq:notation-4}
\end{align}
where $\mathbf{1}$ is an all-one column vector of dimension $n$. We remark that bold lowercase letters represent a vector of dimension $m\times n$, while normal lowercase letters represent a vector of dimension $m$. Equipped with these notation, we can re-write the update rule~\eqref{2nd_order_consensus} in the following compact form:
\begin{subequations}
	\begin{align}
		{\bf z}^{(t)}&=  \mathbf{P}^{(t-1)}\Big({\bf z}^{(t-1)} + a_{t}{\bf s}^{(t-1)}\Big), \label{concise_2nd_consensus} \\
		{\bf s}^{(t)}&=\mathbf{P}^{(t-1)}{\bf s}^{(t-1)}+\nabla^{(t)}-\nabla^{(t-1)}-\mu \Delta{ \bf x}^{(t-1)}\label{consensus_on_s},
	\end{align}
\end{subequations}
where $\mathbf{P}^{(t)}= P^{(t)}\otimes I$ with $I$ being an identity matrix of size $n\times n$.

For a real-valued random vector $x$, we define
\begin{equation}\label{expectation_operator}
	\lVert x \rVert_{\mathbb{E}} = \sqrt{  \mathbb{E} [\lVert x \rVert^2]}.
\end{equation}
Accordingly, for a square random matrix $W$, we define
$
\lVert W \rVert_{\mathbb{E}} =\sup_{\lVert x \rVert_{\mathbb{E}}=1}\lVert Wx \rVert_{\mathbb{E}}
$.
Given two real-valued random vectors $x,y$, 
the Minkowski inequality~\cite{gut2013probability} states that
\begin{equation}\label{Minkowski}
	\lVert  x +y \rVert_{\mathbb{E}} \leq \lVert  x \rVert_{\mathbb{E}}+\lVert  y \rVert_{\mathbb{E}}.
\end{equation}
Finally, it is known that Assumption \ref{problemassumption}(iii) implies that 
\begin{equation}
	\label{eq:Lip-cond}
	f_i(x) - f_i(y) - \left\langle\nabla f_i(y), x - y \right\rangle \leq \frac{L}{2}\|x - y\|^2
\end{equation}
for any $x,y\in\mbox{dom}(h)$. 

\section{Proof of Theorem~\ref{inexact_thm}}
\label{appendix:pf-main}
In this section, we provide the proof of Theorem~\ref{inexact_thm}. 

To start, we show the following result that quantifies the deviation between the local estimates $\{x_i^{(t)}\}_{t\geq 0}$ and the auxiliary sequence $\{y^{(t)}\}_{t\geq 0}$.
\begin{lemma}\label{consensus_error_lemma}
	Suppose that $a$ satisfies~\eqref{eq:condition1-a}. Then, for all $t\geq 0$, it holds that
	\begin{equation}
		\begin{split}
		\label{consensus_error}
		&\sum_{\tau=0}^{t}{a_{\tau+1}}\mathbb{E}[\lVert{{\bf x}}^{(\tau)}-{{\bf y}}^{(\tau)}\rVert^2]  \\
		& \leq
		\frac{2}{\eta}\sum_{\tau=0}^{t-1}{a_{\tau+1}}\mathbb{E}[\lVert \Delta{\bf y}^{(\tau)}\rVert^2] +  \frac{2a\sigma^2}{\theta(L+\mu)^2},		
	\end{split}
	\end{equation}
	where $\sigma$ is defined in Theorem~\ref{inexact_thm}, $\eta$ and $\theta$ are given in~\eqref{eq:def-nu}, and both $\eta$ and $\theta$ are positive due to~\eqref{eq:condition1-a} and Lemma~\ref{lem:M-radius}.
\end{lemma}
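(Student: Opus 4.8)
The plan is to bound the primal consensus error $\|\mathbf{x}^{(t)}-\mathbf{y}^{(t)}\|$ by the disagreement of the dual estimates, and then to control that disagreement through a two-dimensional nonnegative linear recursion whose transition matrix is exactly $\mathbf{M}$ from \eqref{eq:matrix-M}. The first step is a stability estimate for the subproblems. Both \eqref{primal} and \eqref{eq:def-y} minimize the same objective up to their linear terms $\langle z_i^{(t)},\cdot\rangle$ and $\langle\overline{z}^{(t)},\cdot\rangle$, and that shared objective is strongly convex with modulus $1+A_t\mu$ (modulus $1$ from $d$ and $A_t\mu$ from the quadratic). The first-order optimality conditions together with strong monotonicity then yield $\|x_i^{(t)}-y^{(t)}\|\leq (1+A_t\mu)^{-1}\|z_i^{(t)}-\overline{z}^{(t)}\|$, hence $\|\mathbf{x}^{(t)}-\mathbf{y}^{(t)}\|\leq(1+A_t\mu)^{-1}\|\tilde{\mathbf{z}}^{(t)}\|$ pointwise. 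Summing the geometric sequence $\{a_\tau\}$ gives the clean identity $1+A_t\mu=(1-a\mu)^{-t}$, which is the bookkeeping device that later turns a $\rho(\mathbf{M})$-rate into the $\nu$-rate of \eqref{eq:def-nu}.

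Next I would derive the coupled recursion for the disagreements. Multiplying the compact updates \eqref{concise_2nd_consensus}--\eqref{consensus_on_s} by $I-\mathbf{J}$ with $\mathbf{J}=\tfrac1n\mathbf{1}\mathbf{1}^T\otimes I$, and using that double stochasticity gives $\mathbf{J}\mathbf{P}^{(t)}=\mathbf{P}^{(t)}\mathbf{J}=\mathbf{J}$, produces $\tilde{\mathbf{z}}^{(t)}=(\mathbf{P}^{(t-1)}-\mathbf{J})(\tilde{\mathbf{z}}^{(t-1)}+a_t\tilde{\mathbf{s}}^{(t-1)})$ and a companion recursion for $\tilde{\mathbf{s}}^{(t)}$ forced by $(I-\mathbf{J})(\nabla^{(t)}-\nabla^{(t-1)}-\mu\Delta\mathbf{x}^{(t-1)})$. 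The contraction I would use, derived from Assumption \ref{graphconnected}(ii)--(iii) by conditioning on the past, the tower property, and \eqref{eq:sigma-bound}, is $\|\mathbf{P}^{(t-1)}\tilde{\mathbf{v}}\|_{\mathbb{E}}\leq\beta\|\tilde{\mathbf{v}}\|_{\mathbb{E}}$ for any mean-zero $\tilde{\mathbf{v}}$ determined by events up to time $t-1$. Combining this with the Minkowski inequality \eqref{Minkowski}, the triangle-inequality bound $\|\nabla^{(t)}-\nabla^{(t-1)}-\mu\Delta\mathbf{x}^{(t-1)}\|\leq(L+\mu)\|\Delta\mathbf{x}^{(t-1)}\|$ from Assumption \ref{problemassumption}, and the splitting $\Delta\mathbf{x}^{(t-1)}=(\mathbf{x}^{(t)}-\mathbf{y}^{(t)})-(\mathbf{x}^{(t-1)}-\mathbf{y}^{(t-1)})+\Delta\mathbf{y}^{(t-1)}$ fed back through the first step, I obtain for $p_t:=\|\tilde{\mathbf{z}}^{(t)}\|_{\mathbb{E}}$ and $q_t:=a_{t+1}\|\tilde{\mathbf{s}}^{(t)}\|_{\mathbb{E}}$ the entrywise inequality $[p_t,q_t]^T\preceq\mathbf{M}[p_{t-1},q_{t-1}]^T+[0,\,a_{t+1}(L+\mu)\|\Delta\mathbf{y}^{(t-1)}\|_{\mathbb{E}}]^T$, with $p_0=0$ and $q_0=a_1\sigma$ (because $\tilde{\mathbf{s}}^{(0)}$ has norm $\sigma$). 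The coefficients collapse to exactly the entries of $\mathbf{M}$ precisely because $a_{t+1}/(1+A_t\mu)$ and $a_{t+1}/a_t$ are constants by the identity above.

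The final step is to sum this recursion. Lemma \ref{lem:M-radius} shows that condition \eqref{eq:condition1-a} forces $\rho(\mathbf{M})<1$, so the system is geometrically stable; unrolling it (equivalently, contracting against a positive left Perron eigenvector of $\mathbf{M}$ to reduce to a scalar recursion of rate $\rho(\mathbf{M})$) writes $p_\tau$ as a $\rho(\mathbf{M})^\tau$-multiple of the initial term plus a $\rho(\mathbf{M})$-weighted convolution of the forcing. Using $\mathbb{E}\|\mathbf{x}^{(\tau)}-\mathbf{y}^{(\tau)}\|^2\leq(1-a\mu)^{2\tau}p_\tau^2$ and $a_{\tau+1}(1-a\mu)^{2\tau}=a_1(1-a\mu)^{\tau}$, the weight $(1-a\mu)^{\tau}$ merges with $\rho(\mathbf{M})^{2\tau}$ into $\nu^{2\tau}$. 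Splitting $p_\tau^2\leq 2(\text{initial})^2+2(\text{forced})^2$ accounts for the factor $2$ in both target constants; the initial part sums as the single geometric series $\sum_\tau\nu^{2\tau}=(1-\nu^2)^{-1}$, giving the $\theta=(1-a\mu)(1-\nu^2)$ in the $\sigma^2$ term, while the forced part is a squared convolution controlled by Young's inequality against the $\ell^1$-kernel $\{\nu^k\}$, producing the $(1-\nu)^{-2}$ that yields $\eta=(1-a\mu)(1-\nu)^2$ and the coefficient $2/\eta$ on $\sum_\tau a_{\tau+1}\mathbb{E}\|\Delta\mathbf{y}^{(\tau)}\|^2$.

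I expect the summation to be the main obstacle. One must fold the geometric weights $(1-a\mu)^{\tau}$ into both the homogeneous and the convolution parts simultaneously, bound the square of the forcing convolution without degrading the sharp $(1-\nu)^{-2}$ rate, and track all the $a$- and $(1-a\mu)$-dependent prefactors so that they cancel to exactly $2/\eta$ and $2a\sigma^2/[\theta(L+\mu)^2]$ rather than to some looser multiple. Establishing the clean contraction $\|\mathbf{P}^{(t-1)}\tilde{\mathbf{v}}\|_{\mathbb{E}}\leq\beta\|\tilde{\mathbf{v}}\|_{\mathbb{E}}$ under the $\|\cdot\|_{\mathbb{E}}$ norm of \eqref{expectation_operator}, which requires the independence in Assumption \ref{graphconnected}(ii) to decouple $\mathbf{P}^{(t-1)}$ from the mean-zero random vector it multiplies, is the other delicate point.
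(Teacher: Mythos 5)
Your proposal is correct and follows essentially the same route as the paper: the Lipschitz bound $\|x_i^{(t)}-y^{(t)}\|\le(1+\mu A_t)^{-1}\|z_i^{(t)}-\overline z^{(t)}\|$ (the paper's Lemma~\ref{gamma_continuity}), the coupled recursion in $(\|\tilde{\mathbf z}^{(\tau)}\|_{\mathbb E},\,a_{\tau+1}\|\tilde{\mathbf s}^{(\tau)}\|_{\mathbb E})$ with transition matrix $\mathbf M$, unrolling via $\rho(\mathbf M)<1$, folding the $(1-a\mu)^\tau$ weights into $\nu$, and a squared-convolution estimate (the paper's Lemma~\ref{pertubed_consensus}, equivalent to your Young's-inequality step) yielding the factors $2/\eta$ and $2a\sigma^2/[\theta(L+\mu)^2]$. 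The only cosmetic differences are that the paper bounds $(\mathbf M^n)_{12}$ explicitly through the eigenvalue formula rather than a Perron eigenvector, and cites the summation lemma from the literature rather than rederiving it.
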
   
\begin{proof}[Proof of Lemma~\ref{consensus_error_lemma}]
	The proof is postponed to Appendix~\ref{sec:proof-Lemma2}.
\end{proof}
Lemma~\ref{consensus_error_lemma} states that if $a$ satisfies~\eqref{eq:condition1-a}, then
the accumulative deviation between ${\bf y}^{(t)}$ and ${\bf x}^{(t)}$ admits an upper bound constituted by the successive change of ${\bf y}^{(t)}$ plus a constant. 

Next, we present the following lemma that pertains to a descent-like property of Algorithm~\ref{LinDDA}.
\begin{lemma}
	\label{lem:descent}
	For all $t\geq 1$, it holds that
	\begin{align}\label{lem:inexact-DA}
		& \sum_{\tau=1}^ta_{\tau}\left(\left\langle \overline{g}^{(\tau-1)}, y^{(\tau)} - x^*\right\rangle + h(y^{(\tau)}) - h(x^*)\right)-   d(x^*) \nonumber \\
		&\leq \frac{\mu}{2}\sum_{\tau=1}^ta_{\tau}\left(\|\overline{x}^{(\tau-1)} - x^*\|^2 - \|\overline{x}^{(\tau - 1)} - y^{(\tau)}\|^2\right)\nonumber \\
		&\quad  -\frac{1}{2}\sum_{\tau=1}^t(1+\mu A_{\tau-1})\|y^{(\tau)} - y^{(\tau-1)}\|^2. 
	\end{align}
\end{lemma}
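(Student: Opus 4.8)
The plan is to recognize that, once the consensus updates are averaged over the network, the auxiliary sequence $\{y^{(\tau)}\}$ is exactly an inexact \emph{centralized} dual averaging sequence, so that the claimed inequality is the standard dual-averaging descent estimate specialized to this setting. Accordingly, the proof splits into establishing the averaged dynamics, running the telescoping dual-averaging argument, and a final completing-the-square identity.

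First I would establish the averaged recursions. Since each $P^{(t)}$ is doubly stochastic, applying $\tfrac1n(\mathbf{1}^T\otimes I)$ to the compact updates \eqref{concise_2nd_consensus}--\eqref{consensus_on_s} annihilates the mixing matrix and gives $\overline{z}^{(t)}=\overline{z}^{(t-1)}+a_t\overline{s}^{(t-1)}$ together with $\overline{s}^{(t)}=\overline{s}^{(t-1)}+\overline{g}^{(t)}-\overline{g}^{(t-1)}-\mu\Delta\overline{x}^{(t-1)}$. With the initialization $\overline{z}^{(0)}=0$ and $\overline{s}^{(0)}=\overline{g}^{(0)}-\mu\overline{x}^{(0)}$, telescoping yields the tracking identity $\overline{s}^{(t)}=\overline{g}^{(t)}-\mu\overline{x}^{(t)}$ and hence $\overline{z}^{(t)}=\sum_{\tau=1}^{t}a_\tau\bigl(\overline{g}^{(\tau-1)}-\mu\overline{x}^{(\tau-1)}\bigr)$.

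Next I would introduce the running model $\psi_t(x):=d(x)+\sum_{\tau=1}^{t}a_\tau\ell_\tau(x)$ with the per-step increment $\ell_\tau(x):=\langle\overline{g}^{(\tau-1)}-\mu\overline{x}^{(\tau-1)},x\rangle+\tfrac\mu2\|x\|^2+h(x)$. Substituting the expression for $\overline{z}^{(t)}$ shows that $\psi_t$ coincides with the objective in \eqref{eq:def-y}, so $y^{(t)}=\argmin_x\psi_t(x)$, while $\psi_t=\psi_{t-1}+a_t\ell_t$. Since $\psi_{t-1}$ is strongly convex with modulus $1+\mu A_{t-1}$ and minimized at $y^{(t-1)}$, evaluating at $y^{(t)}$ gives $\psi_{t-1}(y^{(t)})\ge\psi_{t-1}(y^{(t-1)})+\tfrac{1+\mu A_{t-1}}2\|y^{(t)}-y^{(t-1)}\|^2$; adding $a_t\ell_t(y^{(t)})$ and telescoping, using $\psi_0(y^{(0)})=d(x^{(0)})=0$ from \eqref{eq:initial-cond}, produces a lower bound on $\psi_t(y^{(t)})$. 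Sandwiching this against the trivial upper bound $\psi_t(y^{(t)})\le\psi_t(x^*)$ then gives $\sum_{\tau=1}^{t}a_\tau\bigl(\ell_\tau(y^{(\tau)})-\ell_\tau(x^*)\bigr)\le d(x^*)-\sum_{\tau=1}^{t}\tfrac{1+\mu A_{\tau-1}}2\|y^{(\tau)}-y^{(\tau-1)}\|^2$.

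Finally I would convert $\ell_\tau(y^{(\tau)})-\ell_\tau(x^*)$ into the form appearing in \eqref{lem:inexact-DA} via a completing-the-square identity: writing $u=\overline{x}^{(\tau-1)}$, one verifies $-\mu\langle u,y^{(\tau)}-x^*\rangle+\tfrac\mu2(\|y^{(\tau)}\|^2-\|x^*\|^2)=-\tfrac\mu2(\|u-x^*\|^2-\|u-y^{(\tau)}\|^2)$, so that $\ell_\tau(y^{(\tau)})-\ell_\tau(x^*)$ equals $\langle\overline{g}^{(\tau-1)},y^{(\tau)}-x^*\rangle+h(y^{(\tau)})-h(x^*)-\tfrac\mu2(\|u-x^*\|^2-\|u-y^{(\tau)}\|^2)$. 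Substituting and rearranging yields exactly \eqref{lem:inexact-DA}. The only genuinely non-routine points are noticing that double stochasticity (Assumption~\ref{graphconnected}(i)) collapses the averaged dynamics into a clean inexact dual-averaging recursion, and the sign bookkeeping in the completing-the-square step; the telescoping itself is the textbook dual-averaging argument, so I expect the averaged-recursion identification to be the crux.
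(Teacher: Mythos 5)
Your proposal is correct and follows essentially the same route as the paper: Lemma~\ref{conservation_property} gives the averaged tracking identity, $y^{(t)}$ is recognized as the minimizer of a running model (your $\psi_t$ is the paper's $m_t$ up to a constant in $x$, since $\tfrac{\mu}{2}\|x-\overline{x}^{(\tau)}\|^2$ and $\tfrac{\mu}{2}\|x\|^2-\mu\langle\overline{x}^{(\tau)},x\rangle$ differ only by a term independent of $x$), and the bound follows from strong convexity of $m_{\tau-1}$ at its minimizer, telescoping, and evaluating the model at $x^*$. Your final completing-the-square step is just the paper's choice of writing the quadratic as $\tfrac{\mu}{2}\|x-\overline{x}^{(\tau-1)}\|^2$ from the outset, so the two arguments are the same proof in slightly different bookkeeping.
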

\begin{proof}[Proof of Lemma~\ref{lem:descent}]
	The proof is postponed to Appendix~\ref{sec:proof-Lemma3}.
\end{proof}

Equipped with the above two technical lemmas, we are ready to present the proof of Theorem~\ref{inexact_thm}.
\begin{proof}[Proof of Theorem~\ref{inexact_thm}]
	For all $\tau\geq 0$, one has
	\begin{align}
		& \frac{1}{n}\sum_{i=1}^{n}a_\tau\left( f_i({y}^{(\tau)}) -f_i(x^*)\right) \nonumber \\
		& \leq \frac{1}{n}\sum_{i=1}^{n} a_\tau\Big( f_i(x_{i}^{(\tau-1)}) - f_i(x^*) + \frac{L}{2}\lVert {y}^{(\tau)}-x_{i}^{(\tau-1)} \rVert^2 \nonumber\\
		&\quad \quad \qquad \quad \,\,\,  +\langle \nabla f_i(x_{i}^{(\tau-1)}),{y}^{(\tau)}-x_{i}^{(\tau-1)} \rangle\Big) \nonumber \\
		& \leq \frac{1}{n}\sum_{i=1}^{n}  a_\tau\Big(\frac{L}{2}\lVert {y}^{(\tau)}-x_{i}^{(\tau-1)} \rVert^2 -\frac{\mu}{2}\lVert x_{i}^{(\tau-1)}-x^* \rVert^2  \nonumber\\
		&\quad \quad \qquad \quad \,\,\,+ \langle \nabla f_i(x_{i}^{(\tau-1)}),{y}^{(\tau)}-x^* \rangle \Big) \nonumber \\
		& = \frac{1}{n}\sum_{i=1}^{n}a_\tau\Big(\frac{L}{2}\lVert {y}^{(\tau)}-x_{i}^{(\tau-1)} \rVert^2 -\frac{\mu}{2}\lVert x_{i}^{(\tau-1)}-x^* \rVert^2\Big) \nonumber\\
		&\quad  + a_\tau\Big\langle \overline{g}^{(\tau-1)}, {y}^{(\tau)}-x^*\Big\rangle,
		\label{FDDA_smoothness_use}
	\end{align}
	where the two inequalities follow from~\eqref{eq:Lip-cond} and~\eqref{eq:strongly-convex}, respectively, and the equality uses the definition of $\overline{g}^{(\tau-1)}$.
	Upon summing up~\eqref{FDDA_smoothness_use} from $\tau = 1$ to $\tau = t$ and using Lemma~\ref{lem:descent} and $F = \frac{1}{n}\sum_{i=1}^n f_i + h$, we obtain
	\begin{align}
		& \sum_{\tau=1}^t a_{\tau} \left( F(y^{(\tau)}) - F(x^*) \right) \nonumber \\
		& \leq \frac{1}{n}\sum_{\tau=1}^t\sum_{i=1}^{n}a_\tau\Big(\frac{L}{2}\lVert {y}^{(\tau)}-x_{i}^{(\tau-1)} \rVert^2 -\frac{\mu}{2}\lVert x_{i}^{(\tau-1)}-x^* \rVert^2\Big) \nonumber\\ 
		& \quad + \frac{\mu}{2}\sum_{\tau=1}^ta_{\tau}\left(\|\overline{x}^{(\tau-1)} - x^*\|^2 - \|\overline{x}^{(\tau - 1)} - y^{(\tau)}\|^2\right) \nonumber \\
		&\quad -\frac{1}{2}\sum_{\tau=1}^t(1+\mu A_{\tau-1})\|y^{(\tau)} - y^{(\tau-1)}\|^2+ d(x^*) .  
	\end{align}
	Using the definition of $\Delta\mathbf{y}^{(\tau-1)}$ and the fact 
	\begin{align*}
		&\lVert \overline{x}^{(\tau-1)}-x^* \rVert^2  = \left\|\frac{1}{n}\sum_{i=1}^n x_i^{(\tau-1)} - x^*\right\|^2	\\ &\leq \frac{1}{n}\sum_{i=1}^{n}\lVert x_{i}^{(\tau-1)}-x^* \rVert^2,
	\end{align*}
	one can simplify the above inequality to
	\begin{align}
		&\sum_{\tau=1}^t a_{\tau} \left( F(y^{(\tau)}) - F(x^*) \right) \nonumber  \\
		&\leq \frac{1}{n}\sum_{\tau=1}^t\sum_{i=1}^n a_{\tau} \left(\frac{L}{2}\|y^{(\tau)} - x_i^{(\tau - 1)}\|^2 - \frac{\mu}{2}\|y^{(\tau)} - \overline{x}^{(\tau-1)}\|^2\right)\nonumber\\
		& \quad - \frac{1}{n}\sum_{\tau=1}^t\frac{1 + \mu A_{\tau-1}}{2}\|\Delta\mathbf{y}^{(\tau-1)}\|^2 + d(x^*). \label{eq:inter-step}
	\end{align}
	By the definition of $\overline{x}^{(\tau-1)}$, $\mathbf{x}^{(\tau)}$, and $\mathbf{y}^{(\tau)}$, one can verify
	\begin{equation*}
		\begin{split}
			&\sum_{i=1}^{n} \lVert y^{(\tau)}-\overline{x}^{(\tau-1)} \rVert^2 \\ &= \sum_{i=1}^{n} \left(\lVert \overline{x}^{(\tau-1)}\rVert^2 - \lVert x_{i}^{(\tau-1)}\rVert^2 + \lVert y^{(\tau)}-x_{i}^{(\tau-1)} \rVert^2\right) \\
			&= \sum_{i=1}^{n}\left( \lVert \overline{x}^{(\tau-1)}-y^{(\tau-1)}\rVert^2 - \lVert x_{i}^{(\tau-1)}-y^{(\tau-1)}\rVert^2 \right) \\
			&\quad  + \sum_{i=1}^n\|y^{(\tau)} - x_i^{(\tau-1)}\|^2 \\
			&\geq \sum_{i=1}^n \left( \|y^{(\tau)} - x_i^{(\tau-1)}\|^2 - \lVert x_{i}^{(\tau-1)}-y^{(\tau-1)}\rVert^2 \right) \\
			&= \|\mathbf{y}^{(\tau)} - \mathbf{x}^{(\tau-1)}\|^2 - \|\mathbf{y}^{(\tau-1)} - \mathbf{x}^{(\tau-1)}\|^2.
		\end{split}
	\end{equation*}
	Besides, recall that $F$ is convex , $\tilde{y}^{(t)} = A_t^{-1}\sum_{\tau=1}^t a_\tau y^{(\tau)}$, and $A_t = \sum_{\tau=1}^ta_\tau$.
	These, together with~\eqref{eq:inter-step}, yield
	\begin{small}
	\begin{align*}
		&A_t\left(F(\tilde{y}^{(t)})  -F(x^*)\right)  \\
		&\leq \frac{1}{n}\sum_{\tau=1}^ta_\tau\left(\frac{L-\mu}{2}\|\mathbf{y}^{(\tau)} - \mathbf{x}^{(\tau-1)}\|^2 + \frac{\mu}{2}\|\mathbf{x}^{(\tau-1)} - \mathbf{y}^{(\tau-1)}\|^2\right) \\
		& \quad  -\frac{1}{n}\sum_{\tau=1}^t\frac{1 + \mu A_{\tau-1}}{2}\|\Delta\mathbf{y}^{(\tau-1)}\|^2+ d(x^*) .
	\end{align*}
\end{small}
	Upon using the inequality
	$$ \|\mathbf{y}^{(\tau)} - \mathbf{x}^{(\tau-1)}\|^2 \leq 2\|\Delta\mathbf{y}^{(\tau-1)}\|^2 + 2\|\mathbf{y}^{(\tau-1)} - \mathbf{x}^{(\tau-1)}\|^2, $$
	we further obtain
	\begin{align*}
		& A_t\left(F(\tilde{y}^{(t)})  -F(x^*)\right) \\
		& \leq \frac{1}{n}\sum_{\tau=1}^t a_\tau\left(L-\mu - \frac{1 + \mu A_{\tau-1}}{2a_\tau}\right)\|\Delta\mathbf{y}^{(\tau-1)}\|^2\\
		&\quad+ \frac{2L - \mu}{2n}\sum_{\tau=1}^t a_\tau\|\mathbf{x}^{(\tau-1)} - \mathbf{y}^{(\tau-1)}\|^2 + d(x^*) \\
		& = \frac{2L - \mu - \frac{1}{a}}{2n}\sum_{\tau=1}^t a_\tau\|\Delta\mathbf{y}^{(\tau-1)}\|^2\\
		&\quad + \frac{2L - \mu}{2n}\sum_{\tau=1}^t a_\tau\|\mathbf{x}^{(\tau-1)} - \mathbf{y}^{(\tau-1)}\|^2 + d(x^*),
	\end{align*}
	where the equality follows from the identity
	$ \frac{1 + \mu A_{\tau-1}}{a_\tau} = \frac{1 - a\mu}{a}, $
	which holds due to the update rule of $\{a_t\}_{t\geq 0}$ and $\{A_t\}_{t\geq 0}$. Upon taking expectation on both sides of the above inequality and using Lemma~\ref{consensus_error_lemma}, one has
	\begin{align}
		&A_t\left(\mathbb{E}[F(\tilde{y}^{(t)})] - F(x^*)\right) + \frac{\gamma}{2n}\sum_{\tau=1}^ta_{\tau}\mathbb{E}[\|\Delta\mathbf{y}^{(\tau-1)}\|^2] \nonumber \\
		&\leq d(x^*) + \frac{(2L - \mu)a\sigma^2}{n\theta(L+\mu)^2} = C, \label{eq:last}
	\end{align}
	where $\gamma>0$ is defined in~\eqref{2ndcondition}. This implies~\eqref{convergence} as desired. Moreover, it follows from~\eqref{eq:last} and $A_t\left(\mathbb{E}[F(\tilde{y}^{(t)})]  - F(x^*)\right) \geq 0$ that
	$$ \sum_{\tau=1}^ta_{\tau}\mathbb{E}[\|\Delta\mathbf{y}^{(\tau-1)}\|^2] \leq \frac{2nC}{\gamma}. $$
	This, together with the convexity of $\lVert \cdot \rVert^2$, Jensen's Inequality, $a_t\leq a_{t+1}$ for all $t\geq 0$, and Lemma~\ref{consensus_error_lemma}, yields
	\begin{align*}
		&	{A_t}\mathbb{E}[\lVert{\tilde{\bf x}}^{(t)}-\tilde{\bf y}^{(t)}\rVert^2] \leq \sum_{\tau=1}^{t}{a_{\tau}}\mathbb{E}[\lVert{{\bf x}}^{(\tau)}-{{\bf y}}^{(\tau)}\rVert^2]  \\
		&\leq \sum_{\tau=0}^{t}{a_{\tau+1}}\mathbb{E}[\lVert{{\bf x}}^{(\tau)}-{{\bf y}}^{(\tau)}\rVert^2] \\
		& \leq \frac{2}{\eta}\sum_{\tau=0}^{t-1}{a_{\tau+1}}\mathbb{E}[\lVert \Delta{\bf y}^{(\tau)}\rVert^2] + \frac{2a\sigma^2}{\theta(L+\mu)^2}  \\
		&\leq \frac{4nC}{\eta\gamma} + \frac{2a\sigma^2}{\theta(L+\mu)^2} = D,
	\end{align*}
	which implies~\eqref{eq:x-y-dist} as desired.
\end{proof}

\section{Proofs of Corollaries~\ref{coro:1} and~\ref{coro:2}}
\label{appendix:pf-coro}
In this section, we provide the proofs of Corollary~\ref{coro:1} and Corollary~\ref{coro:2}. 

\subsection{Proof of Corollary~\ref{coro:1}}
\label{appendix:pf-coro1}
\begin{proof}[Proof of Corollary~\ref{coro:1}]
	Since $\mu>0$, we obtain from the update of $A_t$ in Algorithm~\ref{LinDDA} that
	$$ \frac{1}{A_t} = \frac{\mu}{\left(\frac{1}{1 - a\mu}\right)^t - 1} \leq \frac{(1 - a\mu)^t}{a}. $$
	Besides, upon using the fact that $F$ is strongly convex with modulus $\mu$, one has that for all $t\geq 0$ and $i=1,\dots,n$,
	\begin{align*}
		\|x_i^{(t)} - x^*\|^2 &\leq 2\|x_i^{(t)} - y^{(t)}\|^2 + 2\|y^{(t)} - x^*\|^2 \\
		& \leq 2\|x_i^{(t)} - y^{(t)}\|^2 + \frac{4}{\mu}\left( F(y^{(t)}) - F(x^*) \right).
	\end{align*}
	These, together with~\eqref{convergence} and~\eqref{eq:x-y-dist}, yields~\eqref{eq:linear-conv}.
\end{proof}

\subsection{Proof of Corollary~\ref{coro:2}}\label{sec:proof-corollary2}

\begin{proof}[Proof of Corollary~\ref{coro:2}]
	The upper bounds in~\eqref{eq:y-sublinear} and~\eqref{eq:x-to-y} directly follow from the results in Theorem~\ref{inexact_thm} and 
	\begin{equation*}
		\frac{1}{A_t} = \frac{1}{at}.
	\end{equation*}
	For the special case $h(x)=0$ and $d(x)=\frac{1}{2}\lVert x \rVert^2$, we consider
	\begin{equation}\label{unconstrained}
		\begin{aligned}
			& F(x_{i}^{(\tau)})-F(y^{(\tau)})=f(x_{i}^{(\tau)})-f(y^{(\tau)})\\
			& \leq \frac{1}{n}\sum_{j=1}^{n} \left( f_j(x_{i}^{(\tau)})-\langle \nabla f_j(x_{j}^{(\tau)}),y^{(\tau)}-x_{j}^{(\tau)} \rangle- f_j(x_{j}^{(\tau)}) \right)\\
			& \leq  \frac{1}{n}\sum_{j=1}^{n} \Big( \langle \nabla f_j(x_{j}^{(\tau)}),x_{i}^{(\tau)}-x_{j}^{(\tau)} \rangle-\langle \nabla f_j(x_{j}^{(\tau)}),y^{(\tau)}-x_{j}^{(\tau)} \rangle\\
			&\quad\quad\quad\quad \,\,+\frac{L}{2}\lVert x_{i}^{(\tau)}-y^{(\tau)}+y^{(\tau)}-x_{j}^{(\tau)}  \rVert^2 \Big) \\
			& =  \frac{1}{n}\sum_{j=1}^{n} \Big( \langle \nabla f_j(x_{j}^{(\tau)}),x_{i}^{(\tau)}-y^{(\tau)} \rangle+{L}\lVert x_{i}^{(\tau)}-y^{(\tau)}  \rVert^2\\
			&\quad\quad\quad\quad \,\, +{L}\lVert y^{(\tau)}-x_{j}^{(\tau)}  \rVert^2 \Big) \\
			& =  \left\langle \overline{g}^{(\tau)},x_{i}^{(\tau)}-y^{(\tau)} \right\rangle+{L}\lVert x_{i}^{(\tau)}-y^{(\tau)}  \rVert^2 +\frac{L}{n}\lVert \mathbf{y}^{(\tau)}-\mathbf{x}^{(\tau)}  \rVert^2,
		\end{aligned}
	\end{equation}
	where the two inequalities follow from~\eqref{eq:strongly-convex} and~\eqref{eq:Lip-cond}, respectively.
	The closed-form solutions for~\eqref{primal} and~\eqref{eq:def-y} can be derived as
	\begin{equation*}
		x_{i}^{(\tau)}=-\frac{z_{i}^{(\tau)}}{1+\mu A_\tau}, \quad
		y^{(\tau)}=-\frac{\overline{z}^{(\tau)}}{1+\mu A_\tau}.
	\end{equation*}
	Therefore $y^{(\tau)}=\overline{x}^{(\tau)}$.
	We sum up~\eqref{unconstrained} from $i= 1$ to $i = n$ to get
	\begin{equation}\label{convergence:xtoy}
		\begin{split}
			\sum_{i=1}^{n}\left(F(x_{i}^{(\tau)})-F(y^{(\tau)})\right) 
			\leq 2L\lVert \mathbf{y}^{(\tau)}-\mathbf{x}^{(\tau)}  \rVert^2.
		\end{split}
	\end{equation}
	Upon summing up~\eqref{convergence:xtoy} from $\tau = 1$ to $\tau = t$ and using the convexity of $F$, we obtain
	\begin{equation}
		\begin{split}
			&t\sum_{i=1}^{n}\left(F(\tilde{x}_{i}^{(t)})-F(y^{(\tau)})\right) \leq  \sum_{\tau=1}^{t} \sum_{i=1}^{n}\left(F(x_{i}^{(\tau)})-F(y^{(\tau)})\right) \\
			&\leq 2L\sum_{\tau=1}^{t}\lVert \mathbf{y}^{(\tau)}-\mathbf{x}^{(\tau)}  \rVert^2,
		\end{split}
	\end{equation}
	where $\tilde{x}_{i}^{(t)}= \frac{1}{t}\sum_{\tau=1}^{t}x_i^{(\tau)}$.
	After taking expectation on both sides of the above inequality and
	using Lemma~\ref{consensus_error_lemma} with $a_{\tau}=a$, we get
	\begin{equation}\label{convegence_x}
		\begin{split}
			&t\sum_{i=1}^{n}\mathbb{E}\left[ F(\tilde{x}_{i}^{(t)})-F(y^{(t)})\right]\\
			&\leq  \frac{4L}{\eta}\sum_{\tau=1}^{t}\mathbb{E}[\lVert \Delta{\bf y}^{(\tau-1)}\rVert^2]+  \frac{4L\sigma^2}{\theta (L+\mu)^2}\\
			&=\frac{4L}{(1-\nu)^2}\sum_{\tau=1}^{t}\mathbb{E}[\lVert \Delta{\bf y}^{(\tau-1)}\rVert^2]+  \frac{4\sigma^2}{ L(1-\nu^2)}.
		\end{split}
	\end{equation}
	By setting $\mu=0$ and $d(x^*)=\frac{1}{2}\lVert x^* \rVert^2$ in~\eqref{eq:last}, we have
	\begin{equation}
		\begin{split}
		&a t\left(\mathbb{E}[F(\tilde{y}^{(t)})] - F(x^*)\right) \\
		&  \leq -\left(\frac{1}{2a}-L-\frac{2L}{(1-\nu^2)}\right) \frac{a}{n}\sum_{\tau=1}^t \mathbb{E}[\|\Delta\mathbf{y}^{(\tau-1)}\|^2] \\
		&\quad +\frac{\lVert x^*\rVert^2}{2} + \frac{2a\sigma^2}{nL(1-\nu^2)}.	
	\end{split}
	\end{equation}
	Also, by multiplying $n/a>0$ on both sides of the above inequality and adding the resultant inequality to~\eqref{convegence_x}, we obtain
	\begin{equation}
		\begin{split}
			&	t\left(\mathbb{E}[F(\tilde{x}_{i}^{(t)})]-F(x^*)\right)\leq 
			t\sum_{i=1}^{n}\left(\mathbb{E}[F(\tilde{x}_{i}^{(t)})]-F(x^*)\right) \\
			& \leq
			-\Big( \frac{1}{2a}-{L}    -\frac{6L}{(1-\nu)^2}\Big){\sum_{\tau=1}^{t}  \mathbb{E}[  \lVert \Delta\mathbf{y}^{(\tau-1)} \rVert^2]}\\
			& \quad +\frac{n}{2a}\lVert x^* \rVert^2+ \frac{6\sigma^2}{L\big(1-\nu^2\big)}.
		\end{split}
	\end{equation}
	Now, using the condition in~\eqref{x_condition_smooth}, we arrive at~\eqref{eq:x-h=0}.
\end{proof}

\section{Proofs of Supporting Lemmas for Theorem~\ref{inexact_thm}}
\label{supporting_lemmas}
\subsection{Proof of Lemma~\ref{lem:M-radius}}\label{sec:proof-Lemma1}

\begin{proof}[Proof of Lemma~\ref{lem:M-radius}]
	We first show that $\nu$ monotonically increases with $a$ if $a\in(0, 1/\mu)$. Recall that $\mathbf{M}$ is defined in~\eqref{eq:matrix-M}. Then, the characteristic polynomial of $\mathbf{M}$, denoted by $p(\lambda)$, is a quadratic function:
	\begin{equation}
		\begin{split}
		\label{eq:cha-poly}
		p(\lambda) :=& \det(\lambda I - \mathbf{M}) = (\lambda - M_{11})(\lambda - M_{22}) - M_{12}M_{21} \\
		=& \lambda^2 - \frac{\beta(2 + aL)}{1 - a\mu}\lambda + \frac{\beta^2}{1 - a\mu} - \frac{a\beta(L + \mu)}{(1 - a\mu)^2}.
	\end{split}
	\end{equation}
	Using this, we obtain that $\mathbf{M}$ has two real eigenvalues $\lambda_1 =(\xi_1+\xi_2)/2$ and $ \lambda_2 =(\xi_1-\xi_2)/2$,
	where 
	\begin{equation}
		\label{def:xi}
		\begin{split}
			\xi_1 = \frac{\beta(2+aL)}{1-a\mu}, \qquad \xi_2 = \frac{\sqrt{{a^2\beta^2L^2}+{4a\beta(\beta+1)}{(L+\mu)}}}{1-a\mu}.
		\end{split}
	\end{equation}
	Notice that $\xi_1 > 0$ and $\xi_2>0$ for any $a\in(0,1/\mu)$. Thus, we have $\lambda_1 > 0$ and $|\lambda_1| > |\lambda_2|$ for any $a\in(0,1/\mu)$. It then follows that $\rho(\mathbf{M}) = \lambda_1$ and
	\begin{equation}
		\begin{split}\label{nu_of_a}
		&\nu(a)=\rho(\mathbf{M})\sqrt{1 - a\mu} = \lambda_1\sqrt{1 - a\mu}\\
		&=\frac{\beta(2+aL)}{2\sqrt{1-a\mu}}+\frac{\sqrt{{a^2\beta^2L^2}+{4a\beta(\beta+1)}{(L+\mu)}}}{2\sqrt{1-a\mu}}.
		\end{split}
	\end{equation}
	By routine calculation, one can verify that $\nu'(a)>0$ if $a\in(0, {\mu}^{-1})$. Therefore, the value of $\nu$ monotonically increases with $a$ if $a\in(0, {\mu}^{-1})$.
	
	Next, we show that $\nu < 1$ if~\eqref{eq:condition1-a} is satisfied. Note that~\eqref{eq:condition1-a} implies that $0 < a < \mu^{-1}$ and 
	$
			{\beta (2L+3\mu)}/{(1-\beta)^2}<{1}/{a}-\mu={(1-a\mu)}/{a}.
	$
	It then follows that $1 - a\mu \in (0,1]$ and hence
	\begin{equation*}
		\begin{split}
			0&< (1-\beta)^2-\frac{a\beta(2L+3\mu)}{1-a\mu}\\
			&=	1+{\beta^2}-\left(\beta + \frac{\beta+a\beta(L+\mu)}{1-a\mu}\right)-\frac{a\beta(L+\mu)}{1-a\mu}.
		\end{split}
	\end{equation*}
	Upon dividing both sides of the above inequality by $1-a\mu$, we obtain
	\begin{align}
		0 & <	\frac{1}{1-a\mu}+\frac{\beta^2}{1-a\mu}-\frac{1}{1-a\mu}\big(\beta + \frac{\beta+a\beta(L+\mu)}{1-a\mu}\big)\nonumber\\
		&\quad -\frac{a\beta(L+\mu)}{(1-a\mu)^2} \nonumber \\
		& \leq \frac{1}{1-a\mu}+\frac{\beta^2}{1-a\mu}-\frac{1}{\sqrt{1-a\mu}}\big(\beta + \frac{\beta+a\beta(L+\mu)}{1-a\mu}\big)\nonumber\\
		&\quad -\frac{a\beta(L+\mu)}{(1-a\mu)^2} \nonumber \\
		& = p\left(\frac{1}{\sqrt{1 - a\mu}}\right), \label{eq:value-p}
	\end{align}
	where the second inequality is due to $1 - a\mu \in (0,1]$ and the equality follows from~\eqref{eq:cha-poly}. Besides, using the definition of characteristic polynomial, one further has
	\begin{align}
		 0 &< p\left(\frac{1}{\sqrt{1 - a\mu}}\right)\nonumber \\
	& = \left( \frac{1}{\sqrt{1 - a\mu}} - M_{11} \right)\left( \frac{1}{\sqrt{1 - a\mu}} - M_{22}\right) - M_{12}M_{21}. 
	\end{align}
	By~\eqref{eq:matrix-M}, $\beta\in(0,1)$, and $1 - a\mu\in(0,1]$, we have that $M_{12} > 0$, $M_{21} > 0$, and $1/\sqrt{1 - a\mu} > M_{11}$. It then follows that $1/\sqrt{1 - a\mu} > M_{22}$ and hence
	$$ p^\prime\left(\frac{1}{\sqrt{1 - a\mu}}\right) = \frac{2}{\sqrt{1 - a\mu}} - M_{11} - M_{22} > 0. $$
	This, together with the fact that $q$ is a quadratic function, implies that $q(\lambda)$ is monotonically increasing on $[1/\sqrt{1 - a\mu}, \infty)$. It then follows from~\eqref{eq:value-p} that 
	$ \frac{1}{\sqrt{1 - a\mu}} > \lambda_1 = \rho(\mathbf{M}), $
	which implies that $\nu < 1$. 
\end{proof}

\subsection{Proof of Lemma~\ref{consensus_error_lemma}}\label{sec:proof-Lemma2}
In this subsection, we first present three technical lemmas, and then provide the proof of Lemma~\ref{consensus_error_lemma}. 

\begin{lemma} \label{conservation_property}
	For the sequences $\{\overline{s}^{(t)}\}_{t\geq 0}$ and $\{\overline{z}^{(t)}\}_{t\geq 0}$ defined in~\eqref{eq:notation-2}, one has that for any $t\geq 0$,
	\begin{equation}\label{eq:overline-sz}
		\begin{split}
			\overline{s}^{(t)} = \overline{g}^{(t)}-\mu\overline{x}^{(t)}, \qquad
			\overline{z}^{(t)} = \sum_{\tau=0}^{t-1}a_{\tau+1}\overline{s}^{(\tau)}.
		\end{split}
	\end{equation}	
\end{lemma}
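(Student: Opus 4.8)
The plan is to prove the two identities in Lemma~\ref{conservation_property} by induction on $t$, exploiting the double stochasticity of $P^{(t)}$ from Assumption~\ref{graphconnected}(i). The key structural fact is that averaging over agents annihilates the mixing matrix: for any stacked vector $\mathbf{w}^{(t)} = [w_1^{(t)T},\dots,w_n^{(t)T}]^T$, the relation $\mathbf{1}^T P^{(t)} = \mathbf{1}^T$ gives $\frac{1}{n}\sum_i (\mathbf{P}^{(t)}\mathbf{w})_i = \frac{1}{n}\sum_i w_i^{(t)}$, i.e. the consensus step preserves the running average. I would first establish the identity $\overline{s}^{(t)} = \overline{g}^{(t)} - \mu\overline{x}^{(t)}$, and then use it to derive the formula for $\overline{z}^{(t)}$.

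For the first identity, I would average the compact update~\eqref{consensus_on_s} over all agents. Since $\mathbf{P}^{(t-1)} = P^{(t-1)}\otimes I$ preserves the average, averaging the consensus term $\mathbf{P}^{(t-1)}\mathbf{s}^{(t-1)}$ yields exactly $\overline{s}^{(t-1)}$, while averaging the increment $\nabla^{(t)} - \nabla^{(t-1)} - \mu\Delta\mathbf{x}^{(t-1)}$ gives $(\overline{g}^{(t)} - \overline{g}^{(t-1)}) - \mu(\overline{x}^{(t)} - \overline{x}^{(t-1)})$. Hence the telescoping recursion
\begin{equation*}
\overline{s}^{(t)} - \overline{s}^{(t-1)} = (\overline{g}^{(t)} - \mu\overline{x}^{(t)}) - (\overline{g}^{(t-1)} - \mu\overline{x}^{(t-1)})
\end{equation*}
holds for every $t\geq 1$. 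The base case is verified directly from the initialization in Algorithm~\ref{LinDDA}: since $z_i^{(0)}=0$ and $s_i^{(0)} = \nabla f_i(x^{(0)}) - \mu x^{(0)}$ with $x_i^{(0)}=x^{(0)}$, one has $\overline{s}^{(0)} = \overline{g}^{(0)} - \mu\overline{x}^{(0)}$. Summing the telescoping recursion from $1$ to $t$ then collapses to $\overline{s}^{(t)} = \overline{g}^{(t)} - \mu\overline{x}^{(t)}$.

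For the second identity, I would average the compact update~\eqref{concise_2nd_consensus}. Because $\mathbf{P}^{(t-1)}$ preserves the average, averaging $\mathbf{P}^{(t-1)}(\mathbf{z}^{(t-1)} + a_t\mathbf{s}^{(t-1)})$ yields $\overline{z}^{(t-1)} + a_t\overline{s}^{(t-1)}$, giving the scalar recursion $\overline{z}^{(t)} = \overline{z}^{(t-1)} + a_t\overline{s}^{(t-1)}$. Starting from $\overline{z}^{(0)} = 0$ and unrolling this recursion telescopes directly to $\overline{z}^{(t)} = \sum_{\tau=0}^{t-1} a_{\tau+1}\overline{s}^{(\tau)}$, which is the claimed formula.

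I expect the proof to be essentially routine once the averaging-preservation property is isolated; the only subtle point is bookkeeping the index shift between the weight $a_t$ multiplying $\mathbf{s}^{(t-1)}$ in~\eqref{concise_2nd_consensus} and the summation index in the target formula, so I would take care that the telescoping in the $\overline{z}$ step lines up as $a_{\tau+1}\overline{s}^{(\tau)}$ rather than being off by one. No strong convexity, Lipschitz, or spectral-gap hypotheses are needed here — this is purely a conservation statement driven by double stochasticity, so the main obstacle is simply stating the average-preservation lemma cleanly rather than any analytic difficulty.
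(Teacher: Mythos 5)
Your proposal is correct and follows essentially the same route as the paper's proof: both rest on the fact that $\mathbf{1}^T P^{(t-1)} = \mathbf{1}^T$ makes the consensus step average-preserving, yield the same recursions $\overline{s}^{(t)} = \overline{s}^{(t-1)} + (\overline{g}^{(t)} - \mu\overline{x}^{(t)}) - (\overline{g}^{(t-1)} - \mu\overline{x}^{(t-1)})$ and $\overline{z}^{(t)} = \overline{z}^{(t-1)} + a_t\overline{s}^{(t-1)}$, and close them with the same initialization; the paper merely phrases the telescoping as a formal induction. Your attention to the index alignment $a_{\tau+1}\overline{s}^{(\tau)}$ matches the paper's bookkeeping exactly.
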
 
\begin{proof}
	We prove by an induction argument. Since $s_i^{(0)} = \nabla f_i(x^{(0)}) - \mu x^{(0)}$, $z_i^{(0)} = 0$ and $x_i^{(0)} = x^{(0)}$ for all $i$, we readily have that~\eqref{eq:overline-sz} holds when $t = 0$. Now, suppose that~\eqref{eq:overline-sz} holds for $t-1$. From~\eqref{eq:notation-1} and~\eqref{eq:notation-2}, we observe that the following identities hold for any $\tau \geq 0$:
	\begin{equation}\label{eq:kro-form}
		\begin{aligned}
		&\overline{x}^{(\tau)} = \frac{1}{n}(\mathbf{1}^T\otimes I)\mathbf{x}^{(\tau)}, \quad 
		\overline{g}^{(\tau)} = \frac{1}{n}(\mathbf{1}^T\otimes I)\nabla^{(\tau)}, \\
		&
		\overline{s}^{(\tau)} = \frac{1}{n}(\mathbf{1}^T\otimes I)\mathbf{s}^{(\tau)}, \quad
		\overline{z}^{(\tau)} = \frac{1}{n}(\mathbf{1}^T\otimes I)\mathbf{z}^{(\tau)}.
		\end{aligned}
	\end{equation}
	It then follows from this and~\eqref{consensus_on_s} that
	\begin{align*}
		&\overline{s}^{(t)}  = \frac{1}{n}(\mathbf{1}^T\otimes I)\mathbf{s}^{(t)}\\
		&= \frac{1}{n}(\mathbf{1}^T\otimes I)(P^{(t-1)}\otimes I){\bf s}^{(t-1)} + \frac{1}{n}(\mathbf{1}^T\otimes I)\nabla^{(t)}\\
		&\quad-\frac{1}{n}(\mathbf{1}^T\otimes I)\nabla^{(t-1)} - \frac{\mu}{n}(\mathbf{1}^T\otimes I)\Delta{ \bf x}^{(t-1)} \\
		&= \frac{1}{n}(\mathbf{1}^TP^{(t-1)}\otimes I){\bf s}^{(t-1)} + \overline{g}^{(t)} - \overline{g}^{(t-1)} - \mu\overline{x}^{(t)} + \mu\overline{x}^{(t-1)} \\
		&= \frac{1}{n}(\mathbf{1}^T\otimes I){\bf s}^{(t-1)} + \overline{g}^{(t)} - \overline{g}^{(t-1)} - \mu\overline{x}^{(t)} + \mu\overline{x}^{(t-1)} \\
		&= \overline{s}^{(t-1)} + \overline{g}^{(t)} - \overline{g}^{(t-1)} - \mu\overline{x}^{(t)} + \mu\overline{x}^{(t-1)} \\
		&= \overline{g}^{(t)} - \mu\overline{x}^{(t)},
	\end{align*}
	where the second equality is due to~\eqref{consensus_on_s}, the third equality uses the fact that $(A\otimes B)(C\otimes D) = (AC \otimes BD)$, the fourth equality follows from the fact that $P^{(t-1)}$ is doubly stochastic, and the last equality is due to the assumption that~\eqref{eq:overline-sz} holds for $t-1$. Similarly, by~\eqref{concise_2nd_consensus} and~\eqref{eq:kro-form}, we obtain
	\begin{align*}
		&\overline{z}^{(t)}  = \frac{1}{n}(\mathbf{1}^T\otimes I)\mathbf{z}^{(t)} \\
		&= \frac{1}{n}(\mathbf{1}^T\otimes I)(P^{(t-1)}\otimes I)\left({\bf z}^{(t-1)} + a_t\mathbf{s}^{(t-1)}\right)\\
		& = \frac{1}{n}(\mathbf{1}^T\otimes I)\left({\bf z}^{(t-1)} + a_t\mathbf{s}^{(t-1)}\right) \\
		& = \overline{z}^{(t-1)} + a_t\overline{s}^{(t-1)} 
		= \sum_{\tau = 0}^{t-1} a_{\tau+1}\overline{s}^{(\tau)}.
	\end{align*}
	Therefore,~\eqref{eq:overline-sz} holds for $t$ and the induction argument is completed. 
\end{proof}

\begin{lemma} \label{gamma_continuity}
	For the sequence $\{x_i^{(t)}: i = 1,\dots, n\}_{t\geq 0}$ generated by Algorithm~\ref{LinDDA} and the auxiliary sequence $\{y^{(t)}\}_{t\geq 0}$ defined in~\eqref{eq:def-y}, one has that for all $t\geq 0$ and $i = 1,\dots,n$,
	\begin{equation} \label{XYrelation}
		\begin{split}
			\lVert x_{i}^{(t)} -y^{(t)} \rVert & \leq  \frac{1}{1+\mu A_t} \big\lVert z_{i}^{(t)}-\overline{z}^{(t)}\rVert,
		\end{split}
	\end{equation}
	where $\overline{z}^{(t)}$ is defined in~\eqref{eq:notation-2}.
	
\end{lemma}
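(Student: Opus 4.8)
The plan is to recognize both $x_i^{(t)}$ and $y^{(t)}$ as minimizers of one and the same strongly convex objective that differs only in its linear term, and then invoke the standard sensitivity of a minimizer to perturbations of that linear term. Concretely, I would define the common regularizer
$$ \psi_t(x) := A_t\Big(\tfrac{\mu}{2}\|x\|^2 + h(x)\Big) + d(x), $$
so that, by the update rules~\eqref{primal} and~\eqref{eq:def-y},
$$ x_i^{(t)} = \argmin_{x}\big\{\langle z_i^{(t)}, x\rangle + \psi_t(x)\big\}, \qquad y^{(t)} = \argmin_{x}\big\{\langle \overline{z}^{(t)}, x\rangle + \psi_t(x)\big\}. $$
Since $d$ is strongly convex with modulus $1$, the term $A_t\frac{\mu}{2}\|x\|^2$ is strongly convex with modulus $\mu A_t$, and $h$ is convex, the function $\psi_t$ is strongly convex with modulus $1 + \mu A_t$.

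Next I would write the first-order optimality conditions for the two (nonsmooth) subproblems. For the $x_i^{(t)}$-problem, $0 \in z_i^{(t)} + \partial \psi_t(x_i^{(t)})$, i.e., $-z_i^{(t)} \in \partial\psi_t(x_i^{(t)})$; likewise $-\overline{z}^{(t)} \in \partial\psi_t(y^{(t)})$. I would then use the defining inequality of the subdifferential of a strongly convex function (equivalently, strong monotonicity of $\partial\psi_t$): for any $u\in\partial\psi_t(a)$ and $v\in\partial\psi_t(b)$,
$$ \langle u - v, a - b\rangle \geq (1+\mu A_t)\|a - b\|^2. $$
Applying this with $a = x_i^{(t)}$, $u = -z_i^{(t)}$, $b = y^{(t)}$, $v = -\overline{z}^{(t)}$ yields
$$ \langle \overline{z}^{(t)} - z_i^{(t)}, x_i^{(t)} - y^{(t)}\rangle \geq (1+\mu A_t)\|x_i^{(t)} - y^{(t)}\|^2. $$

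Finally, I would bound the left-hand side by Cauchy--Schwarz as $\|z_i^{(t)} - \overline{z}^{(t)}\|\,\|x_i^{(t)} - y^{(t)}\|$, and then divide through by $\|x_i^{(t)} - y^{(t)}\|$ (the claim being trivial when this quantity is zero) to arrive at~\eqref{XYrelation}. There is essentially no hard step here; the only points requiring a little care are the bookkeeping of the strong-convexity modulus $1+\mu A_t$ — making sure the contributions of $d$ and of the weighted quadratic $A_t\frac{\mu}{2}\|\cdot\|^2$ add correctly — and the fact that the nonsmoothness of $h$ forces a subdifferential rather than a gradient argument, so the monotonicity-of-subdifferential form of strong convexity is the appropriate tool.
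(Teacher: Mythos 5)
Your proposal is correct and follows essentially the same route as the paper: both identify $x_i^{(t)}$ and $y^{(t)}$ as values of the same argmin mapping (with strongly convex objective of modulus $1+\mu A_t$) evaluated at $z_i^{(t)}$ and $\overline{z}^{(t)}$, and then use that this mapping is $(1+\mu A_t)^{-1}$-Lipschitz. The only difference is that the paper cites this Lipschitz property from the literature, whereas you prove it directly via strong monotonicity of the subdifferential plus Cauchy--Schwarz, which is exactly the standard argument behind the cited fact.
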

\begin{proof}
	It is easy to see that~\eqref{XYrelation} holds when $t = 0$ because both sides of~\eqref{XYrelation} equal $0$. Now, suppose that $t\geq 1$. Recall that $d$ is strongly convex with modulus $1$. Let the mapping $R:\mathbb{R}^m\rightarrow\mathbb{R}^m$ be defined as
	$$ R(\omega) := \argmin_{x\in\mathbb{R}^m} \left\{\langle \omega, x\rangle + \phi(x)\right\}, $$
	where $\phi(x) = A_t(\mu\|x\|^2/2 + h(x)) + d(x)$ is strongly convex with modulus $1 + \mu A_t$. Then, by~\eqref{primal} and~\eqref{eq:def-y}, we have
	$$ y^{(t)} = R(\overline{z}^{(t)}), \qquad x_i^{(t)} = R(z_i^{(t)}), \quad \forall i = 1,\dots,n. $$
	Moreover, the mapping $R$ is Lipschitz continuous with Lipschitz constant $(1 + \mu A_t)^{-1}$; see, e.g.,~\cite[Proposition 4.9]{juditsky2019unifying}. This immediately implies~\eqref{XYrelation} as desired.
\end{proof}



Next, we recall a lemma from~\cite[Lemma 4]{xu2017convergence}. 
\begin{lemma}
	\label{pertubed_consensus}
	Suppose that $\{q^{(t)}\}_{t\geq 0}$ and $\{p^{(t)}\}_{t\geq 0}$ are two sequences of positive scalars such that for all $t\geq 0$, 
	\begin{equation*}
		q^{(t)}\leq \nu^t c+\sum_{\tau=0}^{t-1}\nu^{t-\tau-1}p^{(\tau)}
	\end{equation*}
	where $\nu\in(0,1)$ and $c\geq 0$ is a constant. Then, the following holds for all $t\geq 0$:
	\begin{equation*}
		\sum_{\tau=1}^{t}(q^{(\tau)})^2\leq  \frac{2}{(1-\nu)^2}\sum_{\tau=0}^{t-1}(p^{(\tau)})^2+\frac{2c^2}{1-\nu^2}.
	\end{equation*}
\end{lemma}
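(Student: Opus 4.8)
The plan is to prove the bound by squaring the recursive estimate for $q^{(t)}$ and then controlling the two resulting contributions separately: the geometric decay of the initial value, and the convolution of $\{p^{(\tau)}\}$ against the geometric kernel $\nu^{t-\tau-1}$. Since both sequences consist of nonnegative scalars, squaring both sides of the hypothesis preserves the inequality, and applying the elementary bound $(a+b)^2 \leq 2a^2 + 2b^2$ with $a = \nu^t q^{(0)}$ and $b = \sum_{\tau=0}^{t-1}\nu^{t-\tau-1}p^{(\tau)}$ gives
\[ (q^{(t)})^2 \leq 2\nu^{2t}(q^{(0)})^2 + 2\Big(\sum_{\tau=0}^{t-1}\nu^{t-\tau-1}p^{(\tau)}\Big)^2. \]
Summing this inequality for the index running from $1$ up to the desired horizon reduces the claim to bounding the two sums on the right-hand side.

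For the first contribution I would simply sum the geometric series and use $\nu\in(0,1)$ to get $\sum_{t=1}^{\infty}\nu^{2t} = \nu^2/(1-\nu^2) \leq 1/(1-\nu^2)$; multiplying by $2(q^{(0)})^2$ then yields exactly the term $\frac{2}{1-\nu^2}(q^{(0)})^2$. This step is routine and requires no extra idea.

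The key step is the treatment of the convolution term. Here I would split each weight as $\nu^{t-\tau-1} = \nu^{(t-\tau-1)/2}\cdot\nu^{(t-\tau-1)/2}$ and apply the Cauchy--Schwarz inequality to obtain
\[ \Big(\sum_{\tau=0}^{t-1}\nu^{t-\tau-1}p^{(\tau)}\Big)^2 \leq \Big(\sum_{\tau=0}^{t-1}\nu^{t-\tau-1}\Big)\Big(\sum_{\tau=0}^{t-1}\nu^{t-\tau-1}(p^{(\tau)})^2\Big) \leq \frac{1}{1-\nu}\sum_{\tau=0}^{t-1}\nu^{t-\tau-1}(p^{(\tau)})^2, \]
where the last inequality uses $\sum_{\tau=0}^{t-1}\nu^{t-\tau-1} = \sum_{j=0}^{t-1}\nu^j \leq 1/(1-\nu)$. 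Summing over the outer index and interchanging the order of the double summation, the inner sum over the outer index for each fixed $\tau$ is again a geometric series bounded by $1/(1-\nu)$, which produces the factor $1/(1-\nu)^2$ and hence, after multiplying by $2$, the term $\frac{2}{(1-\nu)^2}\sum_{\tau=0}^{t-1}(p^{(\tau)})^2$. Adding the two contributions completes the proof.

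The main obstacle---though still modest---is the index bookkeeping in the double-sum interchange: one must verify that after Cauchy--Schwarz the kernel $\nu^{t-\tau-1}$ is summed over the outer index for fixed $\tau$, and that this second geometric factor, combined with the $1/(1-\nu)$ already extracted, produces precisely $(1-\nu)^{-2}$. Beyond the weighted Cauchy--Schwarz step and this summation swap, no analytic subtlety is involved, so I expect the argument to be short and entirely self-contained.
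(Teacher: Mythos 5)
Your proof is correct and complete. The paper does not actually prove this lemma---it imports it verbatim as Lemma 4 of the cited reference \cite{xu2017convergence}---but your argument (square, apply $(a+b)^2\le 2a^2+2b^2$, weighted Cauchy--Schwarz on the convolution term, then swap the order of summation to extract the second geometric factor) is precisely the standard derivation of this estimate, and every step, including the index bookkeeping in the double-sum interchange, checks out.
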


Now we are ready to prove Lemma~\ref{consensus_error_lemma}.
\begin{proof}[Proof of Lemma~\ref{consensus_error_lemma}]
	From Lemma~\ref{conservation_property}, we have
	\begin{equation*}
		\overline{z}^{(\tau)} = \overline{z}^{(\tau-1)}+a_\tau \overline{s}^{(\tau-1)}.
	\end{equation*}
	This, together with~\eqref{concise_2nd_consensus} and the definition of $\tilde{\mathbf{z}}^{(\tau)}$ in~\eqref{eq:notation-3}, yields
	\begin{equation}\label{deviation_z}
		\begin{split}
			\tilde{\bf z}^{(\tau)} 
			&=  \mathbf{P}^{(\tau-1)}{\bf z}^{(\tau-1)}-\mathbf{1}\otimes\overline{z}^{(\tau-1)}\\
			&\quad +{{a_{\tau}}}\left(\mathbf{P}^{(\tau-1)}{\bf s}^{(\tau-1)} - \mathbf{1}\otimes\overline{s}^{(\tau-1)}\right).
		\end{split}
	\end{equation}
	It then follows from~\eqref{Minkowski} that
	\begin{equation}\label{eq:take-exp}
		\begin{split}
			\lVert	\tilde{\bf z}^{(\tau)} \rVert_\mathbb{E} 
			&\leq  \left\| \mathbf{P}^{(\tau-1)}{\bf z}^{(\tau-1)} - \mathbf{1}\otimes\overline{z}^{(\tau-1)}\right\|_\mathbb{E}\\
			&\quad+{{a_{\tau}}}\left\| \mathbf{P}^{(\tau-1)}{\bf s}^{(\tau-1)}-\mathbf{1}\otimes\overline{s}^{(\tau-1)}\ \right\|_\mathbb{E}.
		\end{split}
	\end{equation}
	Note that $\mathbf{1}\otimes\bar{z}^{(\tau-1)} = (\mathbf{1}\otimes I)\bar{z}^{(\tau-1)}$, which, together with~\eqref{eq:kro-form} and the identity $(A\otimes B)(C\otimes D) = (AC\otimes BD)$, yields
	$$ \mathbf{1}\otimes\bar{z}^{(\tau-1)} = \frac{1}{n}(\mathbf{1}\otimes I)(\mathbf{1}^T\otimes I)\mathbf{z}^{(\tau-1)} = \left(\frac{\mathbf{1}\mathbf{1}^T}{n}\otimes I\right)\mathbf{z}^{(\tau-1)}. $$
	Using this and $\mathbf{P}^{(\tau-1)} = P^{(\tau-1)}\otimes I$, we obtain
	\begin{align*}
		&\mathbf{P}^{(\tau-1)}{\bf z}^{(\tau-1)}-\mathbf{1}\otimes\overline{z}^{(\tau-1)} \\
		& = (P^{(\tau-1)}\otimes I)\mathbf{z}^{(\tau-1)} - \left(\frac{\mathbf{1}\mathbf{1}^T}{n}\otimes I\right)\mathbf{z}^{(\tau-1)} \\
		& = \left(\left(P^{(\tau-1)}-\frac{\mathbf{1}\mathbf{1}^{T}}{n}\right)\otimes I\right){\bf z}^{(\tau-1)} \\
		& = \left(\left(P^{(\tau-1)}-\frac{\mathbf{1}\mathbf{1}^{T}}{n}\right)\otimes I\right)\left(\tilde{\bf z}^{(\tau-1)} + (\mathbf{1}\otimes I)\bar{z}^{(\tau-1)}\right) \\
		& = \left(\left(P^{(\tau-1)}-\frac{\mathbf{1}\mathbf{1}^{T}}{n}\right)\otimes I\right)\tilde{\bf z}^{(\tau-1)} \\
		&\quad+ \left(\left(P^{(\tau-1)}\mathbf{1}-\mathbf{1}\right)\otimes I\right)\bar{z}^{(\tau-1)} \\
		& = \left(\left(P^{(\tau-1)}-\frac{\mathbf{1}\mathbf{1}^{T}}{n}\right)\otimes I\right)\tilde{\bf z}^{(\tau-1)},
	\end{align*}
	where the third equality uses~\eqref{eq:notation-3} and $\mathbf{1}\otimes\bar{z}^{(\tau-1)} = (\mathbf{1}\otimes I)\bar{z}^{(\tau-1)}$, the fourth equality follows from the identity $(A\otimes B)(C\otimes D) = (AC\otimes BD)$, and the last one is due to the fact that $P^{(\tau-1)}$ is doubly stochastic. Then, by~\eqref{expectation_operator} and Assumption~\ref{graphconnected}, one has
	\begin{small}
	\begin{equation*}
		\begin{split}
			&\left\| \mathbf{P}^{(\tau-1)}{\bf z}^{(\tau-1)}-(\mathbf{1}\otimes I)\overline{z}^{(\tau-1)} \right\|_{\mathbb{E}}\\
			& =\left\| \left(\left(P^{(\tau-1)}-\frac{\mathbf{1}\mathbf{1}^{T}}{n}\right)\otimes I\right)\tilde{\bf z}^{(\tau-1)} \right\|_{\mathbb{E}}\\
			& \overset{(i)}{\leq } \lVert	\tilde{\bf z}^{(\tau-1)} \rVert_\mathbb{E}\sqrt{ \rho \left(\mathbb{E}\left[\left(P^{(\tau-1)}-\frac{\mathbf{1}\mathbf{1}^{T}}{n}\right)^{T}\left(P^{(\tau-1)}-\frac{\mathbf{1}\mathbf{1}^{T}}{n}\right)  \right] \right) }	  \\
			& = \lVert	\tilde{\bf z}^{(\tau-1)} \rVert_\mathbb{E}  \sqrt{ \rho \left(\mathbb{E}\left[P^{{(\tau-1)}^{T}}P^{(\tau-1)}\right] -\frac{\mathbf{1}\mathbf{1}^{{T}}}{n}  \right) }	 \\
			& \leq \beta\lVert	\tilde{\bf z}^{(\tau-1)} \rVert_\mathbb{E}
		\end{split}
	\end{equation*}
\end{small}
	where 
	we use $(A\otimes B)^T = A^T\otimes B^T$ and $(A\otimes B)(C\otimes D) = AC\otimes BD$ and that $P^{(\tau-1)}$ is independent of $\tilde{\bf z}^{(\tau-1)}$ in (i). Using the same arguments as above, we have
	\begin{equation}\label{eq:P-s}
		\left\| \mathbf{P}^{(\tau-1)}{\bf s}^{(\tau-1)}-\mathbf{1}\otimes\overline{s}^{(\tau-1)}\ \right\|_\mathbb{E} \leq \beta\lVert	\tilde{\bf s}^{(\tau-1)} \rVert_\mathbb{E}. 
	\end{equation}
	It then follows from~\eqref{eq:take-exp} that
	\begin{equation}\label{dual_error}
		\begin{split}
			\lVert	\tilde{\bf z}^{(\tau)} \rVert_\mathbb{E} 
			\leq   \beta \left(\lVert	\tilde{\bf z}^{(\tau-1)} \rVert_\mathbb{E} 
			+a_{\tau}	\lVert	\tilde{\bf s}^{(\tau-1)} \rVert_\mathbb{E} \right).
		\end{split}
	\end{equation}
	Similarly, from Lemma~\ref{conservation_property},~\eqref{eq:notation-3}, and~\eqref{consensus_on_s}, we obtain
	\begin{equation}\label{eq:tilde-s}
		\begin{split}
			\lVert	\tilde{\bf s}^{(\tau)} \rVert_\mathbb{E} 
			&  \leq  \left\| \mathbf{P}^{(\tau-1)}{{\bf s}}^{(\tau-1)}-\left(\mathbf{1}\otimes I\right)\overline{s}^{(\tau-1)}\right\|_{\mathbb{E}}\\
			&\quad +\left\| \nabla^{(\tau)}- \nabla^{(\tau-1)}-\left(\mathbf{1}\otimes I\right)(\overline{g}^{(\tau)}- \overline{g}^{(\tau-1)})\right\|_\mathbb{E}\\
			& \quad +\mu\left\|(\mathbf{1}\otimes I)\Delta{ \overline{x}}^{(\tau-1)} -\Delta{ \bf x}^{(\tau-1)}\right\|_\mathbb{E}.
		\end{split}
	\end{equation}
	By~\eqref{eq:kro-form}, one can verify that
	\begin{align*}
		&\nabla^{(\tau)}- \nabla^{(\tau-1)}-(\mathbf{1}\otimes I)(\overline{g}^{(\tau)}-\overline{g}^{(\tau-1)})\\
		 &\quad \quad \quad \qquad \quad \,\,\,\, \quad = \left( \left(I-\frac{\mathbf{1}\mathbf{1}^{{T}}}{n}\right) \otimes I \right) \left(\nabla^{(\tau)}- \nabla^{(\tau-1)}\right), \\
		&\Delta{ \bf x}^{(\tau-1)} - \left(\mathbf{1}\otimes I\right)\Delta{\overline{x}}^{(\tau-1)}  = \left( \left(I-\frac{\mathbf{1}\mathbf{1}^{{T}}}{n}\right) \otimes I \right)\Delta{ \bf x}^{(\tau-1)},
	\end{align*} 
	which respectively imply that
	\begin{small}
	\begin{equation*}
		\begin{split}
			&\lVert \nabla^{(\tau)}- \nabla^{(\tau-1)}-(\mathbf{1}\otimes I)(\overline{g}^{(\tau)}-\overline{g}^{(\tau-1)})\rVert_\mathbb{E}\\
			& \leq   \left\| \left(I-\frac{\mathbf{1}\mathbf{1}^{{T}}}{n}\right) \otimes I \right\| \lVert \nabla^{(\tau)}- \nabla^{(\tau-1)} \rVert_\mathbb{E} \leq \lVert \nabla^{(\tau)}- \nabla^{(\tau-1)} \rVert_\mathbb{E}, \\
			&\lVert \Delta{ \bf x}^{(k-1)}- \left(\mathbf{1}\otimes I\right)\Delta{\overline{x}}^{(k-1)}\rVert_\mathbb{E}\\
			& \leq  \left\| \left(I-\frac{\mathbf{1}\mathbf{1}^{{T}}}{n}\right) \otimes I \right\| \lVert  \Delta \mathbf{x}^{(k-1)} \rVert_\mathbb{E} \leq  \lVert  \Delta \mathbf{x}^{(k-1)} \rVert_\mathbb{E}. 
		\end{split}
	\end{equation*} 
\end{small}
	Besides, it follows from~\eqref{eq:Lip-origin} that $\lVert \nabla^{(\tau)}- \nabla^{(\tau-1)} \rVert_\mathbb{E} \leq L\lVert  \Delta \mathbf{x}^{(k-1)} \rVert_\mathbb{E}$. Upon substituting these and~\eqref{eq:P-s} into~\eqref{eq:tilde-s}, we obtain
	\begin{equation}
		\label{iterations-s}
		\begin{split}
			\lVert	\tilde{\bf s}^{(\tau)} \rVert_\mathbb{E} 
			\leq \beta 	\lVert	\tilde{\bf s}^{(\tau-1)} \rVert_\mathbb{E}  + (L+\mu)\lVert \Delta{ \bf x}^{(\tau-1)} \rVert_\mathbb{E}.
		\end{split}
	\end{equation}
	Multiplying the both sides of the above inequality by $a_{\tau+1} = a_\tau/(1 - a\mu)$, we have
	\begin{equation}
		\begin{split}
			&a_{\tau+1} 	\lVert	\tilde{\bf s}^{(\tau)} \rVert _\mathbb{E}\\
			&\leq  {\frac{1}{1-a\mu}} \left(\beta a_{\tau}	\lVert	\tilde{\bf s}^{(\tau-1)} \rVert_\mathbb{E}  + (L+\mu){{a_{\tau}}} \lVert \Delta{ \bf x}^{(\tau-1)} \rVert_\mathbb{E} \right).
		\end{split}
	\end{equation}
	Upon using Lemma~\ref{gamma_continuity} and
	\begin{equation*}
		\begin{split}
			&\lVert \Delta{ \bf x}^{(\tau-1)} \rVert_\mathbb{E}\\
			&\leq \lVert {\bf x}^{(\tau)}-{{\bf y}}^{(\tau)}\rVert_\mathbb{E}+\lVert {\bf x}^{(\tau-1)}-{{\bf y}}^{(\tau-1)}\rVert_\mathbb{E} + \lVert \Delta{\bf y}^{(\tau-1)}\rVert_\mathbb{E},
		\end{split}
	\end{equation*}
	one has
	\begin{small}
	\begin{equation*}
		\begin{split}
			&{a_\tau} \lVert \Delta{ \bf x}^{(\tau-1)} \rVert_\mathbb{E}\\
			& \leq \frac{{a_\tau}}{1+\mu A_\tau} \lVert\tilde{\mathbf z}^{(\tau)}\rVert_\mathbb{E}+ \frac{{a_{\tau}}}{1+\mu A_{\tau-1}}\lVert\tilde{\mathbf z}^{(\tau-1)}\rVert_\mathbb{E}+ {a_\tau}\lVert \Delta{\bf y}^{(\tau-1)}\rVert_\mathbb{E} \\
			& = a \lVert\tilde{\mathbf z}^{(\tau)}\rVert_\mathbb{E}+ \frac{{a}}{1-a\mu}\lVert\tilde{\mathbf z}^{(\tau-1)}\rVert_\mathbb{E}+ {a_\tau}\lVert \Delta{\bf y}^{(\tau-1)}\rVert_\mathbb{E} ,
		\end{split}
	\end{equation*}
\end{small} 
where the equality follows from
 	\begin{equation*}
 		\frac{1+\mu A_\tau}{a_\tau}= \frac{(\frac{1}{1-a\mu})^\tau}{\frac{a}{1-a\mu}\Big( \frac{1}{1-a\mu}\Big)^{\tau-1}}=\frac{1}{a}.
 	\end{equation*}
In light of~\eqref{dual_error}, we have
	\begin{equation*}
		\begin{split}
			{a_\tau} \lVert \Delta{ \bf x}^{(\tau-1)} \rVert_\mathbb{E}
			&\leq  \left( a\beta +\frac{a}{1-a\mu}\right)\lVert\tilde{\mathbf z}^{(\tau-1)}\rVert_\mathbb{E} \\
			&\quad+ a\beta a_\tau\lVert\tilde{\mathbf s}^{(\tau-1)}\rVert_\mathbb{E} +{a_\tau}\lVert \Delta{\bf y}^{(\tau-1)}\rVert_\mathbb{E}.
		\end{split}
	\end{equation*}
Therefore
	\begin{equation}\label{gradient_error}
		\begin{split}
			a_{\tau+1} 	\lVert	\tilde{\bf s}^{(\tau)} \rVert_\mathbb{E} 	\leq& 
			\frac{  \beta +{a\beta(L+\mu)}}{1-a\mu}	a_{\tau} 	\lVert	\tilde{\bf s}^{(\tau-1)} \rVert_\mathbb{E}\\ 
			&+\frac{a(L+\mu)}{1-a\mu}\left( \beta + \frac{1}{1-a\mu}\right)	 	\lVert	\tilde{\bf z}^{(\tau-1)} \rVert_\mathbb{E}\\
			&	+\frac{L+\mu}{1-a\mu}a_\tau \lVert\Delta{\bf y}^{(\tau-1)}\rVert_\mathbb{E}.
		\end{split}
	\end{equation}
	By combining~\eqref{dual_error} and~\eqref{gradient_error}, the following inequality can be established:
	\begin{equation*}
		\begin{split}
			&\begin{bmatrix}
			\lVert	\tilde{\bf z}^{(\tau)} \rVert_\mathbb{E}  \\
				a_{\tau+1} 	\lVert	\tilde{\bf s}^{(\tau)} \rVert_\mathbb{E} 
			\end{bmatrix} \\
			&\leq  \mathbf{M} 	\begin{bmatrix}
				\lVert	\tilde{\bf z}^{(\tau-1)} \rVert_\mathbb{E}  \\
				a_{\tau} 	\lVert	\tilde{\bf s}^{(\tau-1)} \rVert_\mathbb{E} 
			\end{bmatrix}   + \frac{L+\mu}{{1-a\mu}} \begin{bmatrix}
				0\\
				{a_\tau}\lVert \Delta{\bf y}^{(\tau-1)}\rVert_\mathbb{E} 
			\end{bmatrix} 
		\end{split}
	\end{equation*}
	where $\mathbf{M}$ is defined in~\eqref{eq:matrix-M}.
	By iterating the preceding linear system inequality and using
	\begin{equation*}
		\lVert	\tilde{\bf z}^{(0)} 	\lVert= 0 ,\quad 	\lVert	\tilde{\bf s}^{(0)}	\lVert = \sqrt{\sum_{i=1}^n\left\|\nabla f_i(x^{(0)}) - \overline{g}^{(0)}\right\|^2} :=\sigma,
	\end{equation*}
	we obtain
	\begin{equation*}
		\begin{split}
			&\begin{bmatrix}
				\lVert	\tilde{\bf z}^{(t)} \rVert_\mathbb{E}  \\
				a_{t+1} 	\lVert	\tilde{\bf s}^{(t)} \rVert_\mathbb{E} 
			\end{bmatrix}\\
			\leq&\frac{L+\mu}{1-a\mu} \sum_{\tau=0}^{t-1}\mathbf{M}^{t-\tau-1} \sqrt{a_{\tau+1}}\begin{bmatrix}
				0\\
				\sqrt{a_{\tau+1}}\lVert \Delta{\bf y}^{(\tau)}\rVert_\mathbb{E}
			\end{bmatrix} \\
			&+   \mathbf{M} ^t \begin{bmatrix}
				0 \\
				a_1 	\sigma
			\end{bmatrix} \\
			=&  \sqrt{a_t}\frac{L+\mu}{1-a\mu} \sum_{\tau=0}^{t-1}\left(\mathbf{M}\sqrt{1-a\mu}\right)^{t-\tau-1}\begin{bmatrix}
				0\\
				\sqrt{a_{\tau+1}}\lVert \Delta{\bf y}^{(\tau)}\rVert_\mathbb{E}
			\end{bmatrix}\\
			& + \mathbf{M}^t \begin{bmatrix}
				0\\
				a_1 	\sigma
			\end{bmatrix}.
		\end{split}
	\end{equation*}
	Recall that the eigenvalues for matrix $\mathbf{M}$ are $\lambda_1={(\xi_1+ \xi_2)}/{2}$ and $\lambda_2={(\xi_1- \xi_2)}/{2}$,
	where \begin{equation*}
		\begin{split}
			\xi_1= \frac{\beta(2+aL)}{1-a\mu}, \quad
			\xi_2 = \frac{a(L+\mu)}{1-a\mu}\sqrt{    \frac{\beta^2L^2}{(L+\mu)^2} +\frac{4\beta(\beta+1)}{a(L+\mu)} }.
		\end{split}
	\end{equation*}
	Thus, the
	analytical form for the $n$th power of $\mathbf{M}$ is (see, e.g.,~\cite{williams1992n})
	\begin{equation*}
		\begin{split}
			\mathbf{M}^n &=\lambda_1^n\left( \frac{\mathbf{M}- \lambda_2 I }{\lambda_1-\lambda_2} \right) +\lambda_2^n\left( \frac{\mathbf{M}- \lambda_1 I }{\lambda_2-\lambda_1} \right)  \\
			& = \lambda_1^n\left( \frac{\mathbf{M}- \lambda_2 I }{\lambda_1-\lambda_2} \right) -\lambda_2^n\left( \frac{\mathbf{M}- \lambda_1 I }{\lambda_1-\lambda_2} \right) .
		\end{split}
	\end{equation*}
	It then follows that
	\begin{equation*}
		(\mathbf{M}^n)_{12}=\frac{ \mathbf{M}_{12}(\lambda_1^n  -\lambda_2^n)}{\lambda_1-\lambda_2} = \frac{ \beta(\lambda_1^n  -\lambda_2^n)}{\lambda_1-\lambda_2} \leq \frac{2\beta(\rho(\mathbf{M}))^n}{\xi_2},
	\end{equation*} 
	where $\rho(\mathbf{M})$ is the spectral radius of $\mathbf{M}$. Due to our assumption that
	\begin{equation*}
		\frac{1}{a}>\frac{\beta(2L+3\mu)}{(1-\beta)^2}+\mu>\frac{2\beta(L+\mu)}{(1-\beta)^2}
	\end{equation*}
	and $\beta\in (0,1)$,
	we have 
	$
		\xi_2>\frac{2a\beta(L+\mu)}{1-a\mu}.
	$
	Therefore,
	\begin{equation}\label{consensus_error_mid}
		\begin{split}
			\lVert	\tilde{\bf z}^{(t)} \rVert_\mathbb{E}  
			\leq&
			\frac{2\beta{\frac{L+\mu}{1-a\mu}}\sqrt{a_t}}{\xi_2}\sum_{\tau=0}^{t-1}\nu^{t-\tau-1}\sqrt{a_{\tau+1}}	\lVert \Delta{\bf y}^{(\tau)}\rVert_\mathbb{E}\\
			&+ 	\frac{2\beta}{\xi_2} \left( \rho(\mathbf{M}) \right)^t a_1\sigma\\
			\leq&  \frac{\sqrt{a_{t}}}{a}\sum_{\tau=0}^{t-1}\nu^{t-\tau-1}\sqrt{a_{\tau+1}}	 \lVert \Delta{\bf y}^{(\tau)}\rVert_\mathbb{E}\\
			&+ \frac{1-a\mu}{a(L+\mu)}\left( \rho(\mathbf{M}) \right)^ta_1\sigma.
		\end{split}
	\end{equation}
	This bound, together with Lemma~\ref{gamma_continuity}, yields 
	\begin{equation}\label{x-y}
		\begin{split}
			&\sqrt{a_{t+1}}\lVert{{\bf x}}^{(t)}-{{\bf y}}^{(t)}\rVert_\mathbb{E} \\
			&\leq  \frac{\sqrt{a_{t+1}}}{1+\mu A_t} \left(\frac{\sqrt{a_{t}}}{a}\sum_{\tau=0}^{t-1}\nu^{t-\tau-1}\sqrt{a_{\tau+1}}\lVert \Delta{{\bf y}}^{(\tau)}\rVert_\mathbb{E}+  \frac{\left( \rho(\mathbf{M}) \right)^t}{L+\mu} \sigma\right) \\
			&\leq   \frac{\sum_{\tau=0}^{t-1}\nu^{t-\tau-1}\sqrt{a_{\tau+1}}\lVert \Delta{\bf y}^{(\tau)}\rVert_\mathbb{E}}{\sqrt{1-a\mu}} +\frac{\sqrt{\frac{a}{1-a\mu}}\nu^{t}\sigma}{L+\mu}.
		\end{split}
	\end{equation} 
	The desired inequality~\eqref{consensus_error} then follows from this and Lemma~\ref{pertubed_consensus}.
\end{proof}

\subsection{Proof of Lemma~\ref{lem:descent}}\label{sec:proof-Lemma3}

\begin{proof}[Proof of Lemma~\ref{lem:descent}]
	Define 
	\begin{equation*}
		\begin{split}
			m_{t}({x}):= \sum_{\tau=0}^{t-1}a_{\tau+1}\left( \langle \overline{g}^{(\tau)},x  \rangle+ \frac{\mu}{2}\lVert x-\overline{x}^{(\tau)}\rVert^2+h(x)\right) +d({x})
		\end{split}       
	\end{equation*}
	where $m_0(x)=d(x)$.
	Due to $\overline{z}^{(t)}= \sum_{\tau=0}^{t-1}a_{\tau+1}(\overline{g}^{(\tau)}-\mu\overline{x}^{(\tau)})$ in Lemma~\ref{conservation_property},
	we can equivalently express~\eqref{eq:def-y} as
	\begin{equation*}
		\begin{split}
			y^{(t)}
			= \argmin_{x\in\mathbb{R}^m}m_t(x).
		\end{split}
	\end{equation*}
	Since $m_{\tau-1}(x)$ is strongly convex with modulus $1+\mu A_{\tau-1}$, we have, $\forall x\in \mbox{dom}(h)$, 
	\begin{equation*}
		m_{\tau-1}(x)-m_{\tau-1}(y^{(\tau-1)}) \geq \frac{1}{2}(1+\mu A_{\tau-1})\lVert x-y^{(\tau-1)}\rVert^2.
	\end{equation*}
	Further, by noticing
	\begin{small}
	\begin{equation*}
		\begin{split}
			m_\tau({x}) 
			= m_{\tau-1}({x})+a_{\tau}\left(\langle \overline{g}^{(\tau-1)},x \rangle+\frac{\mu}{2}\lVert x-\overline{x}^{(\tau-1)}\rVert^2 +h(x)\right),
		\end{split}
	\end{equation*}
\end{small}
	we have
	\begin{footnotesize}
	\begin{equation*}
		\begin{split}
			0 \leq &m_{\tau-1}(y^{(\tau)})-m_{\tau-1}(y^{(\tau-1)})-\frac{1}{2}\left(1+\mu A_{\tau-1}\right)\lVert  {y}^{(\tau)}-{ y}^{(\tau-1)} \rVert^2\\
			= &m_\tau(y^{(\tau)})-a_{\tau} \left ( \langle \overline{g}^{(\tau-1)},y^{(\tau)}\rangle+\frac{\mu}{2}\lVert y^{(\tau)}-\overline{x}^{(\tau-1)}\rVert^2+h(y^{(\tau)}) \right)\\
			&-m_{\tau-1}(y^{(\tau-1)})-\frac{1}{2}\left(1+\mu  A_{\tau-1}\right)\lVert  {y}^{(\tau)}-{ y}^{(\tau-1)}\rVert^2,
		\end{split}
	\end{equation*}
\end{footnotesize}
	%
	%
	%
	%
	%
	%
	which is equivalent to
	\begin{equation*}
		\begin{split}
			&	a_{\tau}\left( \langle  \overline{g}^{(\tau-1)},y^{(\tau)} \rangle +h(y^{(\tau)})\right)\\
			&\leq  m_\tau(y^{(\tau)}) -m_{\tau-1}(y^{(\tau-1)})-\frac{\mu}{2}a_\tau\lVert y^{(\tau)}-\overline{x}^{(\tau-1)} \rVert^2\\
			&\quad -\frac{1}{2}\Big(1+\mu  A_{\tau-1}\Big)\lVert  {y}^{(\tau)}-{ y}^{(\tau-1)} \rVert^2 .
		\end{split}
	\end{equation*}
	Summing up the above inequality from $\tau = 1$ to $\tau = t$ leads to
	\begin{equation}\label{inexact_gradient_result1}
		\begin{split}
			&	\sum_{\tau=1}^{t}a_{\tau}\left( \langle  \overline{g}^{(\tau-1)},y^{(\tau)} \rangle +h(y^{(\tau)})\right)\\
			&\leq  m_{t}(y^{(t)}) -m_{0}(y^{(0)})  -\sum_{\tau=1}^{t }\frac{1}{2}\Big((1+\mu  A_{\tau-1})\lVert  \Delta{ y}^{(\tau-1)} \rVert^2 \\
			&\quad +\mu a_\tau\lVert  y^{(\tau)}-\overline{x}^{(\tau-1)}\rVert^2  \Big) \\
			&=  m_{t}(y^{(t)})-\sum_{\tau=1}^{t }\frac{1}{2}\Big((1+\mu  A_{\tau-1})\lVert  {y}^{(\tau)}-{ y}^{(\tau-1)} \rVert^2 \\
			&\quad+\mu a_\tau\lVert  y^{(\tau)}-\overline{x}^{(\tau-1)}\rVert^2  \Big).
		\end{split}
	\end{equation}
	where the equality is due to $y^{(0)}=x^{(0)}$ and~\eqref{eq:initial-cond}.
	Then, we turn to consider
\begin{small}
	\begin{equation}\label{ine:optimum}
		\begin{split}
			&\sum_{\tau=1}^{t}a_{\tau}\langle  \nabla f(x^{(\tau-1)}),-x^* \rangle\\
			 &\leq  \max_{{x}\in\mathbb{R}^m} \Big\{  \sum_{\tau=1}^{t} a_{\tau}\Big(\langle \nabla f(x^{(\tau-1)}),-x \rangle -  \frac{\mu}{2}\lVert x-{x}^{(\tau-1)}\rVert^2\\
			 &\quad \quad \quad \quad \quad \quad \qquad-h(x)\Big) -d(x)\Big\}\\
			&\quad +d(x^*) + \sum_{\tau=1}^{t} a_{\tau} \left(\frac{\mu}{2}\lVert {x}^{(\tau-1)}-x^*\rVert^2+h(x^*)\right) \\
			&= -\min_{{x}\in\mathbb{R}^m} \Big\{  \sum_{\tau=1}^{t} a_{\tau} \Big(\langle  \nabla f(x^{(\tau-1)}),x \rangle+\frac{\mu}{2}\lVert x-{x}^{(\tau-1)}\rVert^2\\
			& \quad \quad \quad \quad \quad \quad \qquad \,\,\,\, +h(x)\Big)+d(x)\Big\}\\
			&\quad +d(x^*) + \sum_{\tau=1}^{t} a_{k}\left( \frac{\mu}{2}\lVert{x}^{(k-1)}- x^*\rVert^2 +h(x^*)\right) \\
			&= -r_{t}(x^{(t)})+d(x^*)+  \sum_{\tau=1}^{t} a_{\tau}\left( \frac{\mu}{2}\lVert {x}^{(\tau-1)}-x^*\rVert^2 +h(x^*)\right).
		\end{split}
	\end{equation}
\end{small}
	which in conjunction with~\eqref{inexact_gradient_result1} leads to the inequality in~\eqref{lem:inexact-DA}. 
\end{proof}

\section{Estimation of $\overline{a}$}\label{sec:est-a}
As we remarked after Theorem~\ref{inexact_thm}, there exists an $\bar{a}\in(0,\mu^{-1})$ such that the conditions in Theorem~\ref{inexact_thm} are satisfied by any $a\in(0,\bar{a})$. Of course, we would like to find an $\bar{a}$ as large as possible, but finding the maximum value of $\bar{a}$ requires solving a nonlinear equation associated with~\eqref{2ndcondition}, which does not admit a closed-form solution. Instead, we provide in the following lemma a conservative estimation of $\bar{a}$. 

In Theorem~\ref{inexact_thm}, we have two conditions on $a$, i.e., 
\begin{equation}
	\label{eq:111}
	\frac{1}{a} > \frac{\beta(2L + 3\mu)}{(1 - \beta)^2} + \mu,
\end{equation}
and
\begin{align}
	\frac{1}{a} > 2L - {\mu} + \frac{4L-2\mu}{\eta(a)}, \label{eq:222}
\end{align}
where $\eta(a) = (1 - a\mu)(1 - \rho(\mathbf{M})\sqrt{1 - a\mu})^2$. Moreover, $\rho(\mathbf{M}) = \lambda_1 = (\xi_1 + \xi_2)/2$, where $\xi_1,\xi_2$ are defined in~\eqref{def:xi}. Then, one can verify that by taking $a = 1/(2\mu)$, we have
\begin{equation*}
	\begin{split}
	 &\eta\left(\frac{1}{2\mu}\right) \\
	 &= \frac{\left(1- \beta(\sqrt{2}+\frac{L}{2\sqrt{2}\mu})-\sqrt{\frac{\beta^2L^2}{8\mu^2}+\beta(\beta+1)(1+\frac{L}{\mu})}\right)^2}{2}. 
 \end{split}
\end{equation*}
We have shown that $\eta$ decreases with $a$ if~\eqref{eq:111} is satisfied, so
$$ \eta(a) > \eta\left(\frac{1}{2\mu}\right) $$
for all $a$ satisfying $0<a<1/(2\mu)$ and~\eqref{eq:111}. Then, as long as $a$ satisfies
\begin{footnotesize}
\begin{equation}
	\begin{aligned}
		\frac{1}{a} & > \frac{\beta(2L + 3\mu)}{(1 - \beta)^2} + \mu, \\
		\frac{1}{a} & > 2L - {\mu} + \frac{4L-2\mu}{\eta(\frac{1}{2\mu})} \\
		&  = 2L-\mu + \frac{8L-4\mu}{\left(1- \beta(\sqrt{2}+\frac{L}{2\sqrt{2}\mu})-\sqrt{\frac{\beta^2L^2}{8\mu^2}+\beta(\beta+1)(1+\frac{L}{\mu})}\right)^2}, \\
		\frac{1}{a} & > 2\mu,
	\end{aligned}
\end{equation}
\end{footnotesize}
then $a$ also satisfies~\eqref{eq:111} and~\eqref{eq:222}. This implies that we can take
\begin{footnotesize}
	\begin{equation*}
		\begin{split}
		\overline{a} = \min \left\{  \frac{1}{2\mu} , \frac{1}{\frac{\beta(2L + 3\mu)}{(1 - \beta)^2} + \mu},  E\right\}
	\end{split}
	\end{equation*}
\end{footnotesize}
where 
\begin{footnotesize}
	\begin{equation*}
	\begin{split}
&E=\\
& \frac{\left(1- \beta(\sqrt{2}+\frac{L}{2\sqrt{2}\mu})-\sqrt{\frac{\beta^2L^2}{8\mu^2}+\beta(\beta+1)(1+\frac{L}{\mu})}\right)^2}{(2L-\mu)\left(  {4+\left(1- \beta(\sqrt{2}+\frac{L}{2\sqrt{2}\mu})-\sqrt{\frac{\beta^2L^2}{8\mu^2}+\beta(\beta+1)(1+\frac{L}{\mu})}\right)^2}\right)}.
	\end{split}
\end{equation*}
\end{footnotesize}
It would be interesting to estimate the order of $\bar{a}$ when the condition number $\kappa = L/\mu$ goes to $\infty$ and the $\beta$, which relates to the connectivity of the stochastic network, goes to $1$. By the standard limiting argument, one can verify that the dominating term inside the above brace is the second term, which is in the order $\mathcal{O}((1-\beta)^2/L)$.

\section{Proof of Theorem~\ref{exact_thm}}\label{sec:proof-Theorem1}
In this section, we first provide a technical lemma, and then present the proof of Theorem~\ref{exact_thm}.
\begin{lemma}\label{dual_averaging_inequality}
	Suppose that the premise of Theorem~\ref{exact_thm} holds.	For the sequence $\{x^{(t)}\}_{t\geq 0}$ generated by~\eqref{eq:standard-DA}, it holds that
	\begin{equation}\label{exact}
		\begin{split}
			&\sum_{\tau=1}^{t}a_{\tau}\left( \langle  \nabla f(x^{(\tau-1)}),x^{(\tau)}-x^* \rangle +  h(x^{(\tau)})-h(x^*)  \right)-d(x^*) \\
			& \leq 
			-\frac{1}{2}\sum_{\tau=1}^{t }\big(1+\mu  A_{\tau}\big)\lVert x^{(\tau)}-{ x}^{(\tau-1)} \rVert^2    \\
			& \quad  -\frac{\mu}{2}\sum_{\tau=1}^{t} a_{\tau}\lVert {x}^{(\tau-1)}-x^*\rVert^2.
		\end{split}
	\end{equation}
\end{lemma}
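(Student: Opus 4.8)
The plan is to mirror, in the simpler centralized setting, the argument behind Lemma~\ref{lem:descent}. First I would introduce the cumulative model function
\begin{equation*}
	\psi_\tau(x) := \sum_{\sigma=1}^{\tau} a_\sigma\Big(\langle \nabla f(x^{(\sigma-1)}), x\rangle + \tfrac{\mu}{2}\|x - x^{(\sigma-1)}\|^2 + h(x)\Big) + d(x),
\end{equation*}
with $\psi_0 := d$, and observe that, up to additive constants independent of $x$, $\psi_\tau$ coincides with the objective in~\eqref{eq:standard-DA} (the constants $f(x^{(\sigma-1)}) - \langle\nabla f(x^{(\sigma-1)}), x^{(\sigma-1)}\rangle$ do not affect the minimizer). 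Hence $x^{(\tau)} = \argmin_{x} \psi_\tau(x)$. Since $d$ is $1$-strongly convex and each quadratic contributes modulus $\mu a_\sigma$, the function $\psi_\tau$ is strongly convex with modulus $1 + \mu A_\tau$.

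Next I would set up a per-step inequality. Because $x^{(\tau-1)}$ is the exact minimizer of the $(1+\mu A_{\tau-1})$-strongly convex $\psi_{\tau-1}$, for every $x\in\mbox{dom}(h)$ one has $\psi_{\tau-1}(x) - \psi_{\tau-1}(x^{(\tau-1)}) \geq \tfrac{1}{2}(1 + \mu A_{\tau-1})\|x - x^{(\tau-1)}\|^2$. Evaluating this at $x = x^{(\tau)}$ and using the recursion $\psi_\tau(x) = \psi_{\tau-1}(x) + a_\tau\big(\langle\nabla f(x^{(\tau-1)}),x\rangle + \tfrac{\mu}{2}\|x - x^{(\tau-1)}\|^2 + h(x)\big)$ to rewrite $\psi_{\tau-1}(x^{(\tau)})$ as $\psi_\tau(x^{(\tau)})$ minus the step-$\tau$ increment gives a bound on $a_\tau\big(\langle\nabla f(x^{(\tau-1)}), x^{(\tau)}\rangle + h(x^{(\tau)})\big)$ by $\psi_\tau(x^{(\tau)}) - \psi_{\tau-1}(x^{(\tau-1)})$ minus a quadratic penalty. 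Here the $\tfrac{\mu}{2}\|x^{(\tau)} - x^{(\tau-1)}\|^2$ coming from the increment merges with $\tfrac{1}{2}(1 + \mu A_{\tau-1})$ into $\tfrac{1}{2}(1 + \mu A_\tau)$, via $A_\tau = A_{\tau-1} + a_\tau$, which is exactly the coefficient appearing in~\eqref{exact}.

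To finish, I would sum the per-step inequality over $\tau = 1,\dots,t$, which telescopes the $\psi$-values; with $\psi_0(x^{(0)}) = d(x^{(0)}) = 0$ from~\eqref{eq:initial-cond}, the sum collapses to $\psi_t(x^{(t)})$ plus the accumulated quadratic penalties. I would then bound $\psi_t(x^{(t)})$ from above by comparison at the optimum, either directly through $\psi_t(x^{(t)}) \leq \psi_t(x^*)$ or, equivalently, by tracking the nonnegative gap $r_\tau := \psi_\tau(x^*) - \psi_\tau(x^{(\tau)})$, which telescopes with boundary value $r_0 = d(x^*)$. Expanding $\psi_t(x^*)$ summand by summand through $\ell$ as in~\eqref{eq:def-ell}, cancelling the common $f(x^{(\tau-1)})$ contributions, and regrouping the two gradient inner products via $\langle\nabla f(x^{(\tau-1)}), x^{(\tau)} - x^{(\tau-1)}\rangle - \langle\nabla f(x^{(\tau-1)}), x^* - x^{(\tau-1)}\rangle = \langle\nabla f(x^{(\tau-1)}), x^{(\tau)} - x^*\rangle$ then yields the left-hand side of~\eqref{exact} together with the $d(x^*)$ term.

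I expect the main obstacle to be the careful bookkeeping of the quadratic terms through the telescoping: in particular, correctly combining the $\tfrac{\mu}{2}\|x^{(\tau)} - x^{(\tau-1)}\|^2$ produced by the step increment with the optimality penalty $\tfrac{1}{2}(1+\mu A_{\tau-1})\|x^{(\tau)} - x^{(\tau-1)}\|^2$, and pinning down the precise sign and placement of the strong-convexity term $\tfrac{\mu}{2}\|x^{(\tau-1)} - x^*\|^2$ arising from the comparison at $x^*$. Once the quadratic terms are aligned, the remaining steps are routine applications of convexity and the update rule for $\{a_\tau\}$ and $\{A_\tau\}$.
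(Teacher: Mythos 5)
Your strategy is exactly the paper's: the function you call $\psi_\tau$ is the paper's $r_\tau$, the per-step inequality from $(1+\mu A_{\tau-1})$-strong convexity of $r_{\tau-1}$ at its minimizer (with the increment's $\tfrac{\mu a_\tau}{2}\|x^{(\tau)}-x^{(\tau-1)}\|^2$ absorbed into $\tfrac12(1+\mu A_\tau)$), the telescoping with $r_0(x^{(0)})=d(x^{(0)})=0$, and the comparison $r_t(x^{(t)})\leq r_t(x^*)$ (which the paper phrases as the max/min manipulation in~\eqref{ine:optimum}) are all the same steps.

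However, the point you flag as the remaining obstacle --- ``pinning down the precise sign and placement of $\tfrac{\mu}{2}\|x^{(\tau-1)}-x^*\|^2$'' --- is not mere bookkeeping: carried out correctly, the comparison at $x^*$ contributes $+\tfrac{\mu}{2}\sum_{\tau=1}^t a_\tau\|x^{(\tau-1)}-x^*\|^2$ to the right-hand side, not the $-\tfrac{\mu}{2}\sum_{\tau=1}^t a_\tau\|x^{(\tau-1)}-x^*\|^2$ appearing in~\eqref{exact}. The stated inequality with the minus sign is in fact false: take $n=m=1$, $h\equiv 0$, $f(x)=(x+\tfrac12)^2$ (so $L=2$, $\mu=1$), $x^{(0)}=0$, $d(x)=\tfrac12 x^2$, $a=\tfrac12$; then $a_1=1$, $x^{(1)}=x^*=-\tfrac12$, the left-hand side of~\eqref{exact} equals $-\tfrac18$ while the right-hand side equals $-\tfrac38$. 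The paper's own derivation (inequality~\eqref{ine:optimum}) produces the $+$ version, which is what is actually used in the proof of Theorem~\ref{exact_thm}: there the $+\tfrac{\mu}{2}a_\tau\|x^{(\tau-1)}-x^*\|^2$ cancels against the $-\tfrac{\mu}{2}a_\tau\|x^{(\tau-1)}-x^*\|^2$ coming from strong convexity of $f$, so the theorem is unaffected. So your plan is sound and reproduces the paper's argument, but you should prove (and cite downstream) the version of~\eqref{exact} with $+\tfrac{\mu}{2}\sum_{\tau=1}^t a_\tau\|x^{(\tau-1)}-x^*\|^2$; do not expect the minus sign to emerge from any correct execution of this approach.
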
   
\begin{proof}
	We define
	\begin{equation*}
		\begin{split}
			r_{t}({x}):= \sum_{\tau = 0}^{t - 1} a_{\tau + 1} \ell(x; x^{(\tau)}) + d(x), \quad t = 0, 1, \dots,
		\end{split}       
	\end{equation*}
	where $r_0(x)=d(x)$ and $\ell(x;x^{(\tau)})$ is defined in~\eqref{eq:def-ell}.
	It then follows that for any $\tau \geq 1$,
	\begin{equation}\label{eq:def-r}
		\begin{split}
		r_\tau(x) &= r_{\tau-1}({x})\\
		&\quad +a_{\tau}\left(\langle \nabla f(x^{(\tau-1)}),x \rangle+\frac{\mu}{2}\lVert x-{x}^{(\tau-1)}\rVert^2 +h(x)\right).
	\end{split}
	\end{equation}
	By~\eqref{eq:standard-DA}, we know that $x^{(\tau-1)} = \argmin_{x\in\mathbb{R}^m} r_{\tau-1}(x)$.
	Moreover, $r_{\tau-1}(x)$ is strongly convex with modulus $1+\mu A_{\tau-1}$. Then,
	we obtain $\forall x\in \mbox{dom}(h)$
	\begin{equation*}
		r_{\tau-1}(x)-r_{\tau-1}(x^{(\tau-1)}) \geq \frac{1}{2}(1+\mu A_{\tau-1})\lVert x-x^{(\tau-1)}\rVert^2.
	\end{equation*}
	Therefore,
	\begin{footnotesize}
	\begin{equation*}
		\begin{split}
			0  \leq& r_{\tau-1}(x^{(\tau)})-r_{\tau-1}(x^{(\tau-1)})-\frac{1}{2}\big(1+\mu A_{\tau-1}\big)\lVert  x^{(\tau)}-{ x}^{(\tau-1)} \rVert^2\\
			=& -a_{\tau} \left ( \langle \nabla f(x^{(\tau-1)}),x^{(\tau)} \rangle+\frac{\mu}{2}\lVert x^{(\tau)}-{x}^{(\tau-1)}\rVert^2+h(x^{(\tau)}) \right) \\
			&+r_\tau(x^{(\tau)})-r_{\tau-1}(x^{(\tau-1)}) -\frac{1}{2}\big(1+\mu  A_{\tau-1}\big)\lVert  x^{(\tau)}-{ x}^{(\tau-1)}\rVert^2,
		\end{split}
	\end{equation*}
\end{footnotesize}
	where the equality follows from~\eqref{eq:def-r}.
	%
	%
	%
	%
	%
	%
	This, together with $A_{\tau} = A_{\tau-1}+a_{\tau}$, leads to
	\begin{equation*}
		\begin{split}
		&	a_{\tau}  \left( \langle  \nabla f(x^{(\tau-1)}),x^{(\tau)} \rangle + h(x^{(\tau)})\right) \\
		&	\leq  r_\tau(x^{(\tau)}) -r_{\tau-1}(x^{(\tau-1)})-\frac{1}{2}\big(1+\mu  A_{\tau}\big)\lVert x^{(\tau)}-{ x}^{(\tau-1)} \rVert^2 .
		\end{split}
	\end{equation*}
	Summing up the above inequality from $\tau = 1$ to $\tau = t$ yields
	\begin{equation}\label{exact_gradient_result1}
		\begin{split}
			&\sum_{\tau=1}^{t}a_{\tau}\left( \langle  \nabla f(x^{(\tau-1)}),x^{(\tau)} \rangle + h(x^{(\tau)})\right)  \\
			&\leq r_{t}(x^{(t)}) -r_{0}(x^{(0)}) -\sum_{\tau=1}^{t }\frac{1}{2}\big(1+\mu  A_{\tau}\big)\lVert  x^{(\tau)}-{ x}^{(\tau-1)} \rVert^2   \\
			&=  r_{t}(x^{(t)}) -\sum_{\tau=1}^{t }\frac{1}{2}\big(1+\mu  A_{\tau}\big)\lVert x^{(\tau)}-{ x}^{(\tau-1)} \rVert^2,
		\end{split}
	\end{equation}
	where the equality follows from $r_0(x) = d(x)$ and~\eqref{eq:initial-cond}.
		Then, we turn to consider
		\begin{scriptsize}
			\begin{equation}\label{ine:optimum}
				\begin{split}
				&	\sum_{\tau=1}^{t}a_{\tau}\langle  \nabla f(x^{(\tau-1)}),-x^* \rangle\\
					& \leq \max_{{x}} \left\{  \sum_{\tau=1}^{t} a_{\tau}\left(\langle \nabla f(x^{(\tau-1)}),-x \rangle -  \frac{\mu}{2}\lVert x-{x}^{(\tau-1)}\rVert^2-h(x)\right) -d(x)\right\}\\
					&\quad +d(x^*) + \sum_{\tau=1}^{t} a_{\tau} \left(\frac{\mu}{2}\lVert {x}^{(\tau-1)}-x^*\rVert^2+h(x^*)\right) \\
					&=-\min_{{x}\in\mathbb{R}^m} \left\{  \sum_{\tau=1}^{t} a_{\tau} \left(\langle  \nabla f(x^{(\tau-1)}),x \rangle+\frac{\mu}{2}\lVert x-{x}^{(\tau-1)}\rVert^2+h(x)\right)+d(x)\right\}\\
					&\quad +d(x^*) + \sum_{\tau=1}^{t} a_{\tau}\left( \frac{\mu}{2}\lVert{x}^{(\tau-1)}- x^*\rVert^2 +h(x^*)\right) \\
					&= -r_{t}(x^{(t)})+d(x^*)+  \sum_{\tau=1}^{t} a_{\tau}\left( \frac{\mu}{2}\lVert {x}^{(\tau-1)}-x^*\rVert^2 +h(x^*)\right).
				\end{split}
			\end{equation}
		\end{scriptsize}
		Upon summing up~\eqref{exact_gradient_result1} and the above inequality, we obtain~\eqref{exact} as desired.
	\end{proof}

	\begin{proof}[Proof of Theorem~\ref{exact_thm}]
		
		Recall that $f = \frac{1}{n}\sum_{i=1}^nf_i$. Using~\eqref{eq:Lip-cond} and~\eqref{eq:strongly-convex} sequentially, we have
		\begin{equation*}
			\begin{split}
				& a_\tau\left( f({x}^{(\tau)}) -f(x^*)\right)\\
				& \leq a_\tau\Big( \frac{L}{2}\lVert {x}^{(\tau)}-x^{(\tau-1)} \rVert^2+ f(x^{(\tau-1)})\\
				& \quad \quad \quad +\langle \nabla f(x^{(\tau-1)}),{x}^{(\tau)}-x^{(\tau-1)} \rangle-f(x^*) \Big)\\
				& \leq  a_\tau\Big(\frac{L}{2}\lVert {x}^{(\tau)}-x^{(\tau-1)} \rVert^2 +\Big\langle \nabla f(x^{(\tau-1)}),{x}^{(\tau)}-x^* \Big\rangle\\
				&\quad \quad \quad  -\frac{\mu}{2}\lVert x^{(\tau-1)}-x^* \rVert^2 \Big).
			\end{split}
		\end{equation*}
		Upon summing up the above inequality from $\tau = 1$ to $\tau = t$ and using Lemma~\ref{dual_averaging_inequality} and $F =f + h$, 
		we obtain
		\begin{equation*}
			\begin{split}
				&	\sum_{\tau=1}^{t}a_{\tau} \left(  F(x^{(\tau)}) -F(x^*)  \right)  \\
				&	\leq   \sum_{\tau=1}^{t} \Big(  \frac{a_\tau}{2}\Big( L \lVert x^{(\tau)}-x^{(\tau-1)} \rVert^2-\frac{1+\mu A_{\tau}}{a_\tau}  \lVert x^{(\tau)}-{ x}^{(\tau-1)} \rVert^2
				\Big) \Big)\\
				& \quad +d(x^*).
			\end{split}
		\end{equation*}
			According to~\eqref{eq:at} and $A_t = \sum_{\tau=1}^ta_\tau$, one has
			\begin{equation}
				\label{relation_Atoa}
				\frac{1+\mu A_\tau}{a_\tau}= \frac{(\frac{1}{1-a\mu})^\tau}{\frac{a}{1-a\mu}\Big( \frac{1}{1-a\mu}\Big)^{\tau-1}}=\frac{1}{a}.
			\end{equation}
			By substituting this into the above inequality and using the condition $a\leq L^{-1}$, we obtain
			\begin{equation*}
				\begin{split}
				&\sum_{\tau=1}^{t}a_{\tau} \left(  F(x^{(\tau)}) -F(x^*)  \right)  \\
					&\leq  \left( L - \frac{1}{a}\right) \sum_{\tau=1}^{t} \frac{a_\tau}{2}\lVert x^{(\tau)}-x^{(\tau-1)} \rVert^2 +d(x^*) \leq d(x^*).
				\end{split}
			\end{equation*} 
			Upon dividing both sides of the above inequality by $A_t$ and using the convexity of $F$ and $\tilde{x}^{(t)} = A_t^{-1}\sum_{\tau=1}^t a_\tau x^{(\tau)}$, we obtain
			\begin{equation*}
				\begin{split}
					F(\tilde{x}^{(t)}) -F(x^*)
					\leq   \frac{ d(x^*)}{A_t }.
				\end{split}
			\end{equation*}  
			Now it remains to show the statements i) and ii) in Theorem~\ref{exact_thm}. By the definitions of $a_t$ and $A_t$, we readily have $A_t = at$ when $\mu=0$ and
			$$ \frac{1}{A_t} = \frac{\mu}{(\frac{1}{1-a\mu})^t-1} $$
			when $\mu>0$. Moreover, by $0 < a < L^{-1}$ and $L \geq \mu$, one has $0 < a\mu < 1$ when $\mu > 0$. This, together with the above identity, yields that when $\mu > 0$, 
			\begin{equation*}
				\begin{split}
			&	\frac{1}{A_t} = \frac{\mu}{(\frac{1}{1-a\mu})^t-1}  = \frac{\mu(1 - a\mu)^t}{1 - (1 - a\mu)^t}\\
				& \leq  \frac{\mu(1 - a\mu)^t}{1 - (1 - a\mu)} =  \frac{(1 - a\mu)^t}{a}.
			\end{split}
			\end{equation*} 
			This completes the proof.
		\end{proof}

\bibliographystyle{IEEEtran}
\bibliography{barejrnl}

\end{document}